\documentclass[11pt,reqno]{amsart}
\usepackage[utf8]{inputenc}	
\usepackage{amsthm,amsfonts,amscd}
\usepackage{multirow,booktabs,verbatim}
\usepackage
{hyperref}

\usepackage{fullpage}
\usepackage{lastpage}
\usepackage{enumitem}
\usepackage{fancyhdr}
\usepackage{mathrsfs}
\usepackage{wrapfig}
\usepackage{setspace}
\usepackage{calc}
\usepackage{multicol}
\usepackage{cancel}
\usepackage{bbm}
\usepackage{amsmath}
\usepackage{tikz}
\usepackage{ulem}
\usepackage{cleveref}
\usepackage{cases}
\usepackage{natbib}

\usetikzlibrary{shapes.geometric, arrows}

\DeclareMathOperator*{\argmin}{arg\,min}
\DeclareMathOperator*{\argmax}{arg\,max}

\usepackage{amssymb}
\newcommand{\vertiii}[1]{{\left\vert\kern-0.25ex\left\vert\kern-0.25ex\left\vert #1\right\vert\kern-0.25ex\right\vert\kern-0.25ex\right\vert}}

\usepackage{empheq}
\usepackage{framed}
\usepackage[most]{tcolorbox}
\usepackage{xcolor}
\colorlet{shadecolor}{orange!15}

\usepackage{soul}
\usepackage[margin=1in]{geometry}

\numberwithin{equation}{section}

\newtheorem{theorem}{Theorem}[section]
\newtheorem{lemma}{Lemma}[section]
\newtheorem{assumption}{Assumption}[section]
\newtheorem{proposition}{Proposition}[section]
\newtheorem{corollary}{Corollary}[section]

\newtheorem{definition}{Definition}[section]
\newtheorem{remark}{Remark}[section]

\makeatletter

\newcommand{\Rmnum}[1]{\expandafter\@slowromancap\romannumeral #1@}
\makeatother
\title{A Principal-Agent Mean-Field Game Model of Insurance with Risk Interdependence}

\author{Asaf Cohen}
\address{Department of Mathematics, University of Michigan, Ann Arbor, MI 48109}
\email{shloshim@gmail.com}
\thanks{* Asaf Cohen and Mingyan Liu are supported in part by the IUCRC Planning Grant, University of Michigan: Cyber and Terrorism Insurance Studies (CATIS) Center (Grant No. 2514919). Asaf Cohen's research is also supported in part by the National Science Foundation under Grant No. DMS-2505998.}

\author{Ruolan He}
\address{Department of Mathematics, University of Michigan, Ann Arbor, MI 48109}
\email{ruolan@umich.edu}

\author{Mingyan Liu}
\address{Department of Engineering, University of Michigan, Ann Arbor, MI 48109}
\email{mingyan@umich.edu}

\begin{document}

\begin{abstract}
We study an insurance contract-design problem under moral hazard, endogenous participation, and strategic risk interdependence. Because the resulting $N$-agent game suffers from the curse of dimensionality, we approximate the strategic interactions via a heterogeneous mean-field game. We rigorously establish the existence of a lower-level mean-field Nash equilibrium using measurable selection arguments and the Kakutani fixed-point theorem. By proving the $L^1$-Lipschitz continuity of the aggregate participation threshold, we further establish equilibrium uniqueness via a contraction mapping. We then embed this mean-field response into the insurer’s upper-level Stackelberg optimization problem. We formulate the objective through general performance envelopes to accommodate potential equilibrium multiplicity, proving the existence of upper-level $\varepsilon$-optimal contracts, and demonstrating the existence of an exact Stackelberg equilibrium under the uniqueness regime. We conclude by extending the model to finite contract menus, providing numerical evidence that multi-contract screening improves the principal's expected payoff in interdependent risk environments.
\end{abstract}

\maketitle

\noindent{\bf Keywords:} mean-field games, principal-agent problems, insurance contract design, moral hazard, risk interdependence, Stackelberg equilibrium

\noindent{\bf AMS Classification:} 91A16, 49N80, 91B43, 91B41, 91A65



\tableofcontents

\section{Introduction}

We study a principal-agent model of insurance with risk interdependence, motivated fundamentally by the challenges of cyber-risk insurance. In such environments, an insurer (principal) offers a contract to heterogeneous, risk-averse policyholders (agents) who choose costly prevention effort and whether to participate in the contract. Because effort is not directly observable, the insurer faces a classic moral hazard problem. However, the insurer observes a noisy pre-screening signal, and the contract uses screening-based incentives and partial coverage to shape participation and induce optimal safety investments.

We first formulate an $N$-agent game, in which each agent's loss depends on both individual effort and the aggregate behavior of others, but the combination of strategic dependence between all agents' responses and endogenous participation makes the agents' Nash equilibrium analytically intractable due to the curse of dimensionality and the complex fixed-point structure of the participation decisions. We therefore turn to a heterogeneous mean-field model, which allows us to analyze the Nash equilibrium among agents, establish existence of a mean-field Nash equilibrium, and, under additional conditions, uniqueness. This leads naturally to a two-level structure: a lower-level mean-field equilibrium problem for the agents, and an upper-level contract-design problem for the insurer. We then use the lower-level agents' equilibrium characterization to study the insurer's upper-level contract-design problem. Under equilibrium multiplicity, we formulate the insurer's objective through general performance criteria and establish the existence of upper-level $\varepsilon$-optimal contracts; under a unique lower-level equilibrium response, we recover the existence of an exact Stackelberg equilibrium. Building on this framework, we further extend the model from a single contract to a contract menu, illustrating how menu design can screen across risk-aversion types and, in our numerical examples, improve insurer's profit relative to a single-contract benchmark.

{\bf Literature review.} Principal-agent models provide a fundamental framework for studying contractual relationships under moral hazard and asymmetric information. In these models, a principal designs a contract to manage risk and provide incentives for an agent whose effort is costly and not directly observable, with outcomes depending instead on noisy signals of the agent's actions. This formulation originates in the classical work of \citet{holmstrom1979moral, 9ab814a1-71bf-3750-8dda-9705acaa1dda}, and has been extensively developed in both static and dynamic settings (see, e.g., \citet{sannikov2008continuous, cvitanic2006optimal}). These models are typically solved via a Stackelberg equilibrium, in which the principal acts as a leader by committing to a contract, and the agent responds optimally given the contract.

Insurance markets provide a canonical application of the principal-agent framework. The Stackelberg structure naturally reflects the timing of decisions: the insurer first offers a contract, and insured agents then choose their effort levels in response. The agents' prevention efforts affect loss outcomes but cannot be directly monitored. Instead, insurers rely on imperfect and noisy signals from pre-screening, such as audits or third-party assessments, to evaluate agents' underlying risk and effort levels. These pre-screening mechanisms play a central role in improving incentive compatibility and reducing moral hazard, and have been studied extensively in the insurance literature, for example in \citet{boyer2004overcompensation, dohertysmetters2005}. In many insurance settings, however, an agent's loss exposure depends not only on its own effort but also on the actions of other agents in the same environment. Such risk interdependence introduces strategic externalities among the insured population, so that the insurer's contract design must account not only for individual moral hazard but also for the equilibrium interaction among agents.

Related questions have also been studied in the cyber-insurance literature. Most closely related to our setting, \citet{khalili2018designing} analyze contract design with pre-screening, security interdependence and voluntary participation under a profit-maximizing insurer. Their model captures the interaction between screening and security interdependence in contract design, but is limited to homogeneous agents when the number exceeds 2.
Our paper extends this line of work by introducing a heterogeneous mean-field model with endogenous participation, establishing equilibrium existence and uniqueness results at the lower level, and then studying the insurer's upper-level contract design problem within this framework. Earlier work studies how cyber-insurance affects security investment and network outcomes under different market structures and interdependence assumptions. For example, \citet{lelarge2009economic} examine how insurance can provide incentives for self-protection in the presence of interdependent security risks, while \citet{shetty2010competitive} study cyber-insurance in a competitive environment and analyze how market structure affects security and welfare outcomes.

Within the principal-agent literature, a large strand of work focuses on single-agent models, where the principal designs a contract to induce effort from a single strategic agent (see, e.g., \citet{holmstrom1979moral, holmstrom1991multitask}). There are also extensions to multi-agent settings, where the principal interacts with several agents simultaneously, for example, in \citet{attar2010multiple} and \citet{castiglioni2023multi}. When multiple agents are present, the analysis involves two layers of equilibrium: a Nash equilibrium among agents' effort and participation decisions conditional on the principal's policy, and a Stackelberg equilibrium between the principal and the population of agents. A key challenge in such settings is to characterize the agents' Nash equilibrium for a given contract. When agents are heterogeneous and their risks are interdependent, each agent's optimal response depends on the aggregate behavior of others, making the equilibrium analysis high-dimensional and analytically intractable as the number of agents grows.

Mean-field game formulations provide a natural approach to this difficulty by approximating strategic interactions through population-level aggregates; see, for example, \citet{lasry1, lasry2} and \citet{Huang2006}. Several recent works combine principal-agent problems with mean-field interactions, including \citet{elie2019tale} and \citet{elie2021mean}. In our setting, the mean-field interaction arises through risk interdependence: each agent's loss distribution depends on both their own effort and the population-average effort, so that individual decisions and aggregate outcomes are jointly determined. A distinguishing feature of our model is that agents are heterogeneous in multiple dimensions (risk aversion, effort cost, and pre-screening noise) and make an endogenous participation decision in addition to choosing effort. The mean-field equilibrium is therefore characterized by a fixed point in the aggregate effort that simultaneously determines individual best responses, participation thresholds, and the equilibrium composition of the insured population.

{\bf Our contribution.} We make three main contributions. First, for a fixed contract, we formulate the lower-level agents' heterogeneous mean-field game with endogenous participation and establish the existence of a mean-field Nash equilibrium via the Kakutani fixed-point theorem, see \Cref{thm_NE}. Under additional regularity and a small-interaction condition on the interdependence parameter, we show that the aggregate mean-field map is a contraction, thereby yielding uniqueness of the equilibrium, see \Cref{thm_NEuniq}. 

Second, we analyze the upper-level insurer's contract-design problem. When the lower-level mean-field Nash equilibrium may be non-unique, a contract can induce a set of equilibrium payoffs rather than a single payoff value. We therefore formulate the insurer's objective through a general performance criterion and prove the existence of upper-level $\varepsilon$-optimal contracts, see \Cref{thm: insurer}. We then discuss worst- and best-equilibrium payoff envelopes as economically meaningful criteria under equilibrium multiplicity, and show that under a unique lower-level equilibrium response the insurer's problem admits an exact Stackelberg equilibrium.

Third, we extend the framework to a contract menu and study how differential pricing and coverage can be used to screen across risk-aversion types. On the numerical side, we illustrate the mean-field fixed point and comparative statics of participation, examine how the insurer's optimal contract varies with population risk aversion and pre-screening noise, and show that an optimized two-contract menu can outperform the optimized one-contract benchmark in insurer's per-capita profit. We also explore larger menus numerically and find that moving beyond two contracts yields only modest additional gains.

{\bf Economic implications of the pre-screening mechanism.} Beyond the mathematical analysis, a core focus of this paper is explaining the real-world economic value of pre-screening when agents' risks are interconnected. Traditional models usually manage unobservable effort by penalizing agents after a loss occurs (for example, through deductibles). In our setting, however, insurers use imperfect pre-screening signals to estimate an agent's effort before any damage happens. This allows them to adjust coverage and premiums proactively. The accuracy of this screening technology directly determines how well the insurer can control the ``domino effect'' of risk spreading between agents. Ultimately, our framework provides a theoretical foundation for understanding how better underwriting tools, such as advanced cyber-risk audits or real-time threat monitoring and detection,
can expand the limits of what insurers can safely cover when risks are highly correlated.

{\bf Methodological challenges and proof techniques.} The primary analytical hurdle in our framework stems from the intricate coupling of endogenous participation, population heterogeneity, and strategic risk interdependence. Because agents selectively opt into the contract based on their idiosyncratic types—spanning risk aversion, effort cost, and pre-screening noise—the aggregate mean field is driven by a complex, shifting composition of participating agents. To rigorously formalize the lower-level equilibrium, we must bridge pointwise individual optimization with population-level aggregation. Unlike standard applications of the Kakutani fixed-point theorem where topological continuity suffices, our aggregation requires integrating individual best responses across the heterogeneous population measure. For this Aumann integral to be mathematically well-defined, we must establish the weak measurability of the agents' best-response correspondences. We achieve this by deploying measurable selection theorems (specifically the Kuratowski--Ryll-Nardzewski theorem) to construct rigorous expected-effort mappings, which then allows us to apply the Kakutani fixed-point theorem to the aggregate mean field. Furthermore, establishing equilibrium uniqueness presents a distinct topological challenge: we must tightly bound the mass of marginal agents who alter their participation decisions in response to mean-field perturbations. We overcome this by leveraging the implicit function theorem on the participation boundaries to prove that the aggregate participation probability is Lipschitz continuous in the $L^1$ norm, thereby guaranteeing that the mean-field mapping is a strict contraction.

{\bf Organization.} The remainder of the paper is organized as follows. \Cref{sec:N-agent} provides the $N$-agent model as a motivation to study the mean-field model. We formulate the finite-population model and define the mixed Nash equilibrium among agents for a given contract. \Cref{sec:MFG} introduces the lower-level heterogeneous mean-field model, establishes the existence of the mean-field Nash equilibrium via the Kakutani fixed-point theorem, and proves uniqueness under a contraction condition on the interdependence parameter. \Cref{sec:insurer} returns to the upper-level insurer's contract-design problem, formulates the problem through general performance criteria under possible equilibrium multiplicity, proves the existence of upper-level $\varepsilon$-optimal contracts, and discusses worst- and best-equilibrium performance criteria as well as the unique-equilibrium case in which an exact Stackelberg equilibrium exists. It then presents numerical illustrations of the optimal contract in a unique-equilibrium regime. \Cref{sec: menu} extends the framework to a menu of contracts, examines how differential coverage and pricing enable screening across risk-aversion groups, and presents numerical comparisons between one-contract, two-contract, and larger-menu designs. The appendix contains the proofs of the main theoretical results.

\section{Motivational Model: $N$-Agent Model} \label{sec:N-agent}
We begin with a finite population of $N$ (risk-averse) agents. Each agent $i\in\{1,\dots,N\}$ is characterized by a {\it type} $\theta_i = (\gamma_i, c_i, \sigma_i)$, where $\gamma_i > 0$ denotes the {\it risk-aversion parameter}, $c_i > 0$ denotes the {\it cost of effort}, $\sigma_i > 0$ denotes the {\it pre-screening noise}. Let $\mathbf{e} = (e_1, e_2, \ldots, e_N)$, where each $e_i \in [0, \infty)$, denote the vector of efforts, and let $\overline{\mathbf{e}}_{-i} := \frac{1}{N-1} \sum_{j \neq i} e_j$ denote the average effort of all agents other than agent $i$. The loss of agent $i$ is given by
\[
L_{\mathbf{e}}^{(i)} \sim \mathcal{N}\left( \mu \Big(e_i + x \overline{\mathbf{e}}_{-i}\Big),\; \lambda\Big(e_i + x \overline{\mathbf{e}}_{-i}\Big) \right),
\]
where $\mathcal N(a, b)$ denotes the normal distribution with mean $a$ and variance $b$, the functions $\mu: [0, \infty) \to \mathbb R$ and $\lambda: [0, \infty) \to [0, \infty)$ stand for the mean and variance of the loss respectively, and $x$ is an {\it interdependence parameter} that captures how the average effort of the other agents affects agent $i$'s loss distribution. Economically, the parameter $x$ captures the intensity of the network's risk interdependence. In a cyber-insurance context, $x$ represents the ``contagion vulnerability'' of the system. A high $x$ implies that an agent's individual loss is highly susceptible to the aggregate security failures of the network—triggering a systemic domino effect of cascading losses. Conversely, as $x \to 0$, the model collapses to a standard, independent-risk insurance market. Therefore, the parameter $x$ serves as the critical mathematical link between individual moral hazard and systemic risk.\footnote{A natural question is whether the uniform interdependence parameter $x$ could be replaced by heterogeneous, pairwise interaction weights $x_{i,j}$ between any two agents $i$ and $j$. We believe that this is possible and that it would lead to a graphon mean-field game in the limit as the number of players goes to infinity. We discuss it further in Section \ref{sec:6} and leave the formal development of this extension for future research.}

Each agent decides whether to participate in the contract. An agent who remains uninsured bears the full loss exposure and chooses effort to maximize her utility. An agent who enters the contract pays the premium, receives partial coverage according to the contract, and chooses effort accordingly. Hence, for each agent, there are two possible effort responses: one conditional on being outside the contract and one conditional on entering it.

The agent's expected utility outside the contract is,
\begin{align*}
U^{\mathrm{out}}_i(e_i; \overline{\mathbf{e}}_{-i})
&= \mathbb{E}\big[ -\exp\{-\gamma_i\,(-L_{\mathbf{e}}^{(i)}-c_i\,e_i)\}\big] \\
&= -\exp\Big\{\gamma_i \Big(
\mu(e_i+ x \overline{\mathbf{e}}_{-i}) \;+\; c_i\,e_i \;+\; \tfrac{1}{2}\,\gamma_i\,\lambda(e_i+ x \overline{\mathbf{e}}_{-i})
\Big)\Big\}.
\end{align*}
Define the {\it certainty equivalent} as ${\rm CE}_i^{\mathrm{out}}(e;\overline{\mathbf{e}}_{-i})
:= \mu(e+x \overline{\mathbf{e}}_{-i})+c_i e+\tfrac12\gamma_i\lambda(e+x\overline{\mathbf{e}}_{-i})$. Because $U_i^{\mathrm{out}}=-\exp\{\gamma_i {\rm CE}_i^{\mathrm{out}}\}$ and $\gamma_i>0$, maximizing expected utility is equivalent to minimizing ${\rm CE}_i^{\mathrm{out}}$. The best response set is given by
\[
E^{\mathrm{out}}_i(\overline{\mathbf{e}}_{-i}) := \argmin_{e \geq 0} {\rm CE}_i^{\mathrm{out}}(e;\overline{\mathbf{e}}_{-i}).
\]
A best-response effort outside the contract is any measurable 
selection $e^{\mathrm{out}}_i(\overline{\mathbf{e}}_{-i}) \in E^{\mathrm{out}}_i(\overline{\mathbf{e}}_{-i})$.

The insurer offers a contract $(p, \alpha, \beta)$, where $p \in [0, \infty)$ is the {\it base premium}, $\alpha \in [0, \infty)$ is the {\it screening-based discount coefficient}, and $\beta \in [0, 1]$ is the {\it coverage ratio}. Economically, $p$ determines the agent's fixed payment for coverage, $\alpha$ measures the strength of discount based on the insurer's pre-screening assessment of the agent's effort, and $\beta$ determines the fraction of the realized loss reimbursed by the insurer. Under this contract, an insured agent $i$ pays a net premium $p - \alpha S(e_i)$ and bears only the uninsured portion $(1-\beta) L_{\boldsymbol{e}}^{(i)}$ of the realized loss. Thus, his expected utility will be given by
\begin{align*}
U^{\mathrm{in}}_i(e_i; \overline{\mathbf{e}}_{-i})
&= \mathbb{E}\!\left[ - \exp\!\left( -\gamma_i\,\big\{ -p + \alpha S(e_i) - L_{\mathbf{e}}^{(i)} + \beta L_{\mathbf{e}}^{(i)} - c_i e_i \big\} \right) \right] \\
&= - \exp\!\Big( \gamma_i\Big[
p + (c_i - \alpha)\, e_i
+ \tfrac{1}{2}\,\gamma_i\,\alpha^2\,\sigma_i^2 \\
&\qquad\qquad\qquad\quad
+ (1 - \beta)\,\mu\!\big(e_i + x \overline{\mathbf{e}}_{-i}\big)
+ \tfrac{1}{2}\,\gamma_i\,(1 - \beta)^2\,\lambda\!\big(e_i + x \overline{\mathbf{e}}_{-i}\big)
\Big] \Big),
\end{align*}
Define the certainty equivalent inside the contract as
\[
{\rm CE}_i^{\mathrm{in}}(e; \overline{\mathbf{e}}_{-i}):=
p+(c_i-\alpha)e+\tfrac12\,\gamma_i\alpha^2\sigma_i^2
+(1-\beta)\mu(e+x\overline{\mathbf{e}}_{-i})+\tfrac12\,\gamma_i(1-\beta)^2\lambda(e+x\overline{\mathbf{e}}_{-i}).
\]
The best response set of agent $i$ inside the contract is
\[
E^{\mathrm{in}}_i(\overline{\mathbf{e}}_{-i}) := \argmin_{e \geq 0} {\rm CE}_i^{\mathrm{in}}(e; \overline{\mathbf{e}}_{-i}).
\]
A best-response effort inside the contract is any measurable selection $e^{\mathrm{in}}_i(\overline{\mathbf{e}}_{-i}) \in E^{\mathrm{in}}_i(\overline{\mathbf{e}}_{-i})$.

Finally, participation is determined by comparing the maximal expected utilities. An agent participates in the contract only if the {\it individual rationality} (IR) condition holds, and the effort choices satisfy the {\it incentive compatibility} (IC) condition:
\begin{align*}
&\textbf{IR:} \qquad
\max_{e \geq 0} U_i^{\mathrm{in}}\big(e ;\overline{\mathbf e}_{-i}\big) \ge \max_{e \geq 0} U_i^{\mathrm{out}}\big(e ;\overline{\mathbf e}_{-i}\big).\\
&\textbf{IC:} \qquad \text{Given participation, the agent chooses effort } e_i^{\mathrm{in}}(\overline{\mathbf e}_{-i})
\in E^{\mathrm{in}}_i(\overline{\mathbf{e}}_{-i}).
\end{align*}

Given a contract $(p, \alpha, \beta)$, agents simultaneously decide whether to participate and how much effort to exert. To allow for indifference in equilibrium, we permit mixed participation: an equilibrium assigns to each agent $i$ probability $q_i \in [0,1]$ of entering the contract. With probability $q_i$, the agent enters and chooses an effort $e_i^{\mathrm{in}}(\overline{\boldsymbol{e}}_{-i}) \in E^{\mathrm{in}}_i(\overline{\mathbf{e}}_{-i})$; with probability $1-q_i$, the agent stays outside and chooses $e_i^{\mathrm{out}}(\overline{\boldsymbol{e}}_{-i}) \in E^{\mathrm{out}}_i(\overline{\mathbf{e}}_{-i})$.

Since participation is randomized, the agent’s expected utility is the weighted average of the utilities inside and outside the contract,
\[
U_i(e_i; \overline{\mathbf{e}}_{-i}) = q_i U_i^{\mathrm{in}}(e_i; \overline{\mathbf{e}}_{-i}) + (1-q_i) U_i^{\mathrm{out}}(e_i; \overline{\mathbf{e}}_{-i}).
\]
This formulation corresponds to a mixed Nash equilibrium among the agents.

\begin{definition}
A (mixed) Nash equilibrium in this $N$-agent setting is a collection of strategies $\{(e_i^{\mathrm{in}}, e_i^{\mathrm{out}}), q_i: i\in\{1,\dots,N\}\}$, where $e_i^{\mathrm{in/out}}$ is an optimal effort in/out of the contract, and $q_i \in [0, 1]$ is an equilibrium probability with which agent $i$ enters the contract, such that the following conditions hold:

For each agent $i$, define the expected average effort of all other agents induced by the mixed participation profile $q$ as $\overline{\mathbf e}_{-i}^{\,q} := \frac{1}{N-1} \sum_{j \neq i} \big(q_j \cdot e_j^{\mathrm{in}} + (1 - q_j) \cdot e_j^{\mathrm{out}}\big)$. Then
\begin{enumerate}
    \item 
    \begin{align*}
    &e_i^{\mathrm{out}} \in \argmax_{e_i\ge 0} U_i^{\mathrm{out}}(e_i;\overline{\mathbf e}_{-i}^{\,q}), \qquad e_i^{\mathrm{in}} \in \argmax_{e_i\ge 0} U_i^{\mathrm{in}}(e_i;\overline{\mathbf e}_{-i}^{\,q}).
    \end{align*}
    \item
    \begin{alignat*}{2}
    \text{if } & 0 < q_i < 1, \qquad & U_i^{\mathrm{in}}(e_i^{\mathrm{in}}; \overline{\mathbf e}_{-i}^{\,q}) = U_i^{\mathrm{out}}(e_i^{\mathrm{out}}; \overline{\mathbf e}_{-i}^{\,q}),\\[1.5ex]
    \text{if } & q_i = 1, \qquad & U_i^{\mathrm{in}}(e_i^{\mathrm{in}}; \overline{\mathbf e}_{-i}^{\,q}) \geq U_i^{\mathrm{out}}(e_i^{\mathrm{out}}; \overline{\mathbf e}_{-i}^{\,q}),\\[1.5ex]
    \text{if } & q_i = 0, \qquad & U_i^{\mathrm{in}}(e_i^{\mathrm{in}}; \overline{\mathbf e}_{-i}^{\,q}) \leq U_i^{\mathrm{out}}(e_i^{\mathrm{out}}; \overline{\mathbf e}_{-i}^{\,q}).
\end{alignat*}
\end{enumerate}
\end{definition}
The first condition states that, given the average equilibrium effort of the other agents, agent $i$ chooses optimal efforts $e_i^{\mathrm{out}}$ and $e_i^{\mathrm{in}}$ conditioning on staying outside and entering the contract. The second condition describes the participation decision: $q_i = 0$ and $q_i = 1$ correspond to the cases in which staying outside or entering the contract is preferred, while $q_i \in (0, 1)$ is the mixed case, which follows from the indifference principle.

Although the $N$-agent model is conceptually natural, its equilibrium analysis (both theoretical and computational) becomes difficult in the presence of heterogeneity. Individual effort and participation decisions depend on the average effort of others, which in turn depends on the optimal responses of all agents. This mutual dependence, together with endogenous participation, makes the equilibrium mapping hard to characterize. Motivated by these challenges, we study a heterogeneous {\it mean-field} version of the model.

\section{Lower-Level Mean-Field Model of Agents} \label{sec:MFG}

We consider a mean-field game formulation for the agents. For this, we start with a formal setup of continuum of agents indexed by {\it type} $\theta = (\gamma, c, \sigma)$. Each agent chooses an {\it effort} $e(\theta) \in [0, \infty)$. Throughout, we write $\gamma(\theta)$, $c(\theta)$, and $\sigma(\theta)$ for the corresponding components of $\theta$, which denote as before, respectively, the \textit{risk-aversion parameter}, the \textit{cost of effort}, and the \textit{pre-screening noise}. We assume that $\theta$ has distribution $\pi$ in set $\Theta$. The agent's loss $L(\theta)$ is modeled as a Gaussian random variable whose mean and variance depend on both the agent's own effort and the average effort in the population:
\[
L(\theta) \sim \mathcal{N}\left( \mu(e(\theta) + x \tilde{m}),\; \lambda(e(\theta) + x \tilde{m}) \right),
\]
where $\tilde{m} = \int_{\Theta} e(\theta) \, d\pi(\theta)$ is the mean-field effort, and $x \in [0,1)$ is an {\it interdependence parameter} that captures the degree of interdependence among agents.

Through pre-screening, the insurer obtains a pre-screening assessment of each agent's effort, $S(e(\theta)) = e(\theta) + W(\theta)$, where $W(\theta)$ is a zero mean Gaussian noise with variance $\sigma(\theta)^2$.

As in the finite-population model, each agent faces a choice between remaining outside the contract or opting into it. In the following subsections, we set up the mean-field Nash equilibrium (formally defined in Section \ref{sec:3.3}, with existence and uniqueness proved in Sections \ref{sec:3.4} and \ref{sec:3.5}, respectively). To formulate this equilibrium, we use a fixed-point approach: we fix the aggregate mean-field effort $\tilde{m}$, determine the agents' optimal individual responses conditional on $\tilde{m}$, and then require that these individual responses aggregate back to $\tilde{m}$. Therefore, in the immediate analysis of the inside and outside options, we treat the mean-field effort $\tilde{m}$ as a fixed parameter. The last part of this section (Section \ref{sec:3.6}) is dedicated to numerical study.

\subsection{Outside Contract}

Each agent may choose whether to accept an insurance contract $(p, \alpha, \beta)$, consisting of a {\it base premium} $p$, a {\it screening-based discount coefficient} $\alpha$, and a {\it coverage ratio} $\beta \in [0,1]$. Given the mean-field effort $\tilde m$, the expected utility function of a risk-averse agent (with {\it risk-aversion parameter} $\gamma(\theta) > 0$) outside the contract is
\begin{align*}
U^{\mathrm{out}}(e(\theta);\theta, \tilde{m})
&= \mathbb{E}\big[ -\exp\{-\gamma(\theta)\,(-L(e(\theta), \tilde{m})-c(\theta)\,e(\theta))\}\big] \\
&= -\exp\Big\{\gamma(\theta)\Big(
\mu(e(\theta)+ x \tilde{m}) \;+\; c(\theta)\,e(\theta) \;+\; \tfrac{1}{2}\,\gamma(\theta)\,\lambda(e(\theta)+ x \tilde{m})
\Big)\Big\}.
\end{align*}
Define the certainty equivalent as ${\rm CE}^{\mathrm{out}}(e;\theta,\tilde m)
:= \mu(e+x\tilde m)+c(\theta)e+\tfrac12\gamma(\theta)\lambda(e+x\tilde m)$. Since expected utility is a strictly decreasing transformation of this certainty-equivalent cost, maximizing expected utility is equivalent to minimizing ${\rm CE}^{\mathrm{out}}$. The optimal response set is therefore defined by
\[
E^{\mathrm{out}}(\theta;\tilde m) := \argmin_{e\geq 0} {\rm CE}^{\mathrm{out}}(e;\theta,\tilde m).
\]

\subsection{Inside Contract}

The contract offered by the insurer is parameterized by $(p, \alpha, \beta)$, where $p$ denotes the base premium, $\alpha$ is the screening-based discount coefficient applied to the pre-screening signal $S(e(\theta))$, and $\beta \in [0,1]$ is the coverage ratio. Given the mean-field effort $\tilde m$, the expected utility of a risk-averse agent who accepts the contract is:
\begin{align*}
U^{\mathrm{in}}(e(\theta);\theta,\tilde{m})
&= \mathbb{E}\!\left[ - \exp\!\left( -\gamma(\theta)\,\big\{ -p + \alpha S(e(\theta)) - L(e(\theta),\tilde{m}) + \beta L(e(\theta),\tilde{m}) - c(\theta) e(\theta) \big\} \right) \right] \\
&= - \exp\!\Big( \gamma(\theta)\Big[
p + (c(\theta) - \alpha)\, e(\theta)
+ \tfrac{1}{2}\,\gamma(\theta)\,\alpha^2\,\sigma^2(\theta) \\
&\qquad\qquad\qquad\quad
+ (1 - \beta)\,\mu\!\big(e(\theta) + x \tilde{m}\big)
+ \tfrac{1}{2}\,\gamma(\theta)\,(1 - \beta)^2\,\lambda\!\big(e(\theta) + x \tilde{m}\big)
\Big] \Big),
\end{align*}

Define the certainty equivalent as the expression in the brackets above with a generic choice variable $e$.
\[
{\rm CE}^{\mathrm{in}}(e; \theta, \tilde{m}):=
p+(c(\theta)-\alpha)e+\tfrac12\,\gamma(\theta)\alpha^2\sigma^2(\theta)
+(1-\beta)\mu(e+x\tilde{m})+\tfrac12\,\gamma(\theta)(1-\beta)^2\lambda(e+x\tilde{m}).
\]
Because the certainty equivalent perfectly ranks the agent's preferences, maximizing expected utility is equivalent to minimizing this deterministic cost.
The set of optimal effort responses inside the contract is then defined by
\[
E^{\mathrm{in}}(\theta;\tilde m) := \argmin_{e\geq 0} {\rm CE}^{\mathrm{in}}(e;\theta,\tilde m).
\]

\subsection{Lower-Level Mean-Field Nash Equilibrium}\label{sec:3.3}
With the optimal responses for both the outside and inside options established, we now formalize the agent's endogenous participation decision and the resulting mean-field equilibrium. Because the population is heterogeneous, agents will self-select into or out of the contract based on their individual risk and cost types. An agent evaluates the contract by comparing the minimized certainty equivalents under each option. For a given mean-field effort $\tilde m$, let $e^{\mathrm{in}}(\theta; \tilde m)\in E^{\mathrm{in}}(\theta;\tilde m)$ and $e^{\mathrm{out}}(\theta; \tilde m)\in E^{\mathrm{out}}(\theta;\tilde m)$ denote selected optimal efforts inside and outside the contract. Just as in the $N$-player model, an agent of type $\theta$ participates in the contract only if the \textit{individual rationality} (IR) condition holds, and their corresponding effort choice $e^{\mathrm{in}}(\theta; \tilde{m})$ satisfies the \textit{incentive compatibility} (IC) condition.

\textbf{IR:}
\[
U^{\mathrm{in}}(e^{\mathrm{in}}(\theta; \tilde{m});\theta, \tilde{m}) \geq U^{\mathrm{out}}(e^{\mathrm{out}}(\theta;\tilde m);\theta,\tilde m)=\max_{e\geq 0}U^{\mathrm{out}}(e;\theta,\tilde m).
\]

Since the exponential function is monotone, the IR constraint can be equivalently written in terms of the certainty equivalents as follows:
\begin{align*}
&p + (c(\theta) - \alpha)\, e^{\mathrm{in}}(\theta; \tilde{m})
+ \tfrac{1}{2}\,\gamma(\theta)\,\alpha^2\,\sigma^2(\theta) + (1 - \beta)\,\mu\!\big(e^{\mathrm{in}}(\theta; \tilde{m}) + x \tilde{m}\big)\\
&\quad+ \tfrac{1}{2}\,\gamma(\theta)\,(1 - \beta)^2\,\lambda\!\big(e^{\mathrm{in}}(\theta; \tilde{m}) + x \tilde{m}\big)\\
&\quad \leq \mu(e^{\mathrm{out}}(\theta;\tilde m)+ x \tilde{m}) \;+\; c(\theta)\,e^{\mathrm{out}}(\theta;\tilde m) \;+\; \tfrac{1}{2}\,\gamma(\theta)\,\lambda(e^{\mathrm{out}}(\theta;\tilde m)+ x \tilde{m}).
\end{align*}

The IC condition is

\textbf{IC:}
\[
e^{\mathrm{in}}(\theta; \tilde{m})\ \in\ \argmin_{e\ge 0}\ {\rm CE}^{\mathrm{in}}(e; \theta, \tilde{m}).
\]

For a fixed mean-field effort $\tilde m$, an agent first decides its optimal effort inside the contract, which must satisfy the IC condition minimize ${\rm CE}^{\mathrm{in}}$, and its optimal effort outside the contract, which must minimize ${\rm CE}^{\mathrm{out}}$. It then compares the two optimized values through the IR condition and decide whether to enter the contract or remain outside.
When the two optimized values coincide, the agent may mix between participation and non-participation.

A mean-field Nash equilibrium may involve both participating and non-participating agents. Given the equilibrium mean-field $\tilde m$, for each type $\theta$, the agent chooses optimal effort conditional on accepting or rejecting the contract. The equilibrium also includes a measurable participation probability $q(\theta; \tilde m) \in [0, 1]$, which is the probability that type $\theta$ accepts the contract given the mean-field $\tilde m$, while $1 - q(\theta; \tilde m)$ is the probability of staying out.

\begin{definition} \label{defn: equilibrium}
Given a contract $(p, \alpha, \beta)$ and a population distribution $\pi$ over types $\theta = (\gamma, c, \sigma)$, a tuple $(\tilde{m}, \{e^{\mathrm{in}}(\theta; \tilde{m}), e^{\mathrm{out}}(\theta;\tilde m), q(\theta; \tilde{m})\}_{\theta \in \Theta})$ forms a {\rm mean-field Nash equilibrium} if the following conditions hold:
\begin{enumerate}
    \item[(1)] For every type $\theta \in \Theta$, the best response functions satisfy
    \[
    e^{\mathrm{out}}(\theta;\tilde m) \in E^{\mathrm{out}}(\theta;\tilde m), \qquad e^{\mathrm{in}}(\theta; \tilde{m}) \in E^{\mathrm{in}}(\theta;\tilde m).
    \]
    
    \item[(2)] The participation probability $q(\theta; \tilde m)$ is measurable and satisfies
    \begin{equation} \label{eq: participation probability}
    q(\theta; \tilde{m}) = \begin{cases}
        1 & \text{ if }\;\; U^{\mathrm{in}}(e^{\mathrm{in}}(\theta; \tilde{m}); \theta, \tilde{m}) > U^{\mathrm{out}}(e^{\mathrm{out}}(\theta;\tilde m); \theta, \tilde{m}),\\
        0 & \text{ if }\;\; U^{\mathrm{in}}(e^{\mathrm{in}}(\theta; \tilde{m}); \theta, \tilde{m}) < U^{\mathrm{out}}(e^{\mathrm{out}}(\theta;\tilde m); \theta, \tilde{m}),\\
        \text{any value in } [0, 1] & \text{ if }\;\; U^{\mathrm{in}}(e^{\mathrm{in}}(\theta; \tilde{m}); \theta, \tilde{m}) = U^{\mathrm{out}}(e^{\mathrm{out}}(\theta;\tilde m); \theta, \tilde{m}).
    \end{cases}
    \end{equation}

    \item[(3)] Given these individual best responses, the effort contributions among inside and outside agents are
    \begin{align*}
        &m^{\mathrm{in}}(\tilde{m}) = \int_{\Theta} q(\theta; \tilde{m}) e^{\mathrm{in}}(\theta; \tilde{m}) d\pi(\theta), \\
        &m^{\mathrm{out}}(\tilde{m}) = \int_{\Theta} (1-q(\theta; \tilde{m})) e^{\mathrm{out}}(\theta;\tilde m) d\pi(\theta).
    \end{align*}
    The equilibrium mean effort satisfies the fixed-point condition
    \[
    \tilde{m} = m^{\mathrm{in}}(\tilde{m}) + m^{\mathrm{out}}(\tilde{m}),
    \]
    or equivalently,
    \[
    \tilde{m} = \int_{\Theta} \big[q(\theta; \tilde{m}) e^{\mathrm{in}}(\theta; \tilde{m}) + (1-q(\theta; \tilde{m})) e^{\mathrm{out}}(\theta;\tilde m)\big] d\pi(\theta).
    \]
\end{enumerate}
\end{definition}

\color{black}

\subsection{Existence of a Mean-Field Nash Equilibrium}\label{sec:3.4}

To establish the existence of equilibrium, we formulate the mean-field equilibrium condition as a fixed-point problem in the aggregate effort $\tilde m$.  The proof relies on the Kakutani fixed-point theorem stated below. Under suitable compactness, convexity, and measurability conditions, we construct a correspondence mapping aggregate effort into the set of feasible population-average best responses, and verify that it satisfies the assumptions of the fixed-point theorem.

\begin{lemma}[Kakutani Fixed-Point Theorem]
Let $K$ be a non-empty, compact and convex subset of $\mathbb R$. 
Let $\Gamma : K \to 2^K$ be a set-valued function with closed graph and nonempty convex values. 
Then there exists a point $y$ in $K$ such that $y \in \Gamma(y)$.
\end{lemma}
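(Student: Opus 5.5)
Since $K$ is a nonempty compact convex subset of $\mathbb R$, we may write $K=[a,b]$ with $a\le b$, and the argument is an intermediate-value argument carried out with the closed graph in place of continuity. Closedness of the graph together with compactness of $K$ implies that each $\Gamma(y)$ is a closed subset of $[a,b]$. Being nonempty and convex, each $\Gamma(y)$ is therefore a compact interval.

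We consider the set
\[
A:=\{y\in K:\ \exists\, z\in\Gamma(y)\text{ with } z\ge y\}.
\]
The point $a$ lies in $A$, since every value of $\Gamma(a)$ lies in $[a,b]$; hence $A\neq\emptyset$. Let $y^*:=\sup A\in K$.

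\emph{Step 1: $y^*\in A$.} Choose $y_n\in A$ with $y_n\to y^*$, together with witnesses $z_n\in\Gamma(y_n)$ satisfying $z_n\ge y_n$. By compactness of $K$ we may pass to a subsequence along which $z_n\to z\in K$. The closed graph gives $z\in\Gamma(y^*)$, and passing to the limit in $z_n\ge y_n$ gives $z\ge y^*$.

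\emph{Step 2: $\Gamma(y^*)$ contains a point $w\le y^*$.} If $y^*=b$, we can take $w=z$ from Step 1, since $z\in\Gamma(b)\subset[a,b]$ forces $z=b$. If $y^*<b$, every $y\in(y^*,b]$ lies outside $A$, which means every element of $\Gamma(y)$ is strictly less than $y$. Take $y_n\downarrow y^*$ with $y_n>y^*$, and pick any $w_n\in\Gamma(y_n)$, so that $w_n<y_n$. Extract a convergent subsequence $w_n\to w$. The closed graph gives $w\in\Gamma(y^*)$, and taking limits gives $w\le y^*$.

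\emph{Step 3: conclusion.} By Steps 1 and 2, $\Gamma(y^*)$ contains both $z\ge y^*$ and $w\le y^*$. Since $\Gamma(y^*)$ is convex, it contains the whole segment between $w$ and $z$, and in particular $y^*\in\Gamma(y^*)$.

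The only delicate point is Step 2 in the case $y^*<b$. There one must use that points to the right of $y^*$ fail the defining property of $A$ for \emph{every} element of their image, not just for some element. This is exactly what makes an arbitrary selection $w_n$ work, and the closed graph then transports the inequality to $y^*$. Everything else is routine compactness.
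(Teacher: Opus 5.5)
Your argument is correct. The paper does not prove this lemma: it quotes the classical Kakutani theorem (restricted to subsets of $\mathbb R$) and uses it as a black box in Step~5 of the proof of \Cref{thm_NE}. The relevant comparison is therefore with the standard proof, which goes through Brouwer's theorem, either via continuous approximate selections of the closed graph or via simplicial approximation and Sperner's lemma.

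Your route instead uses the order structure of $\mathbb R$. You take $y^*=\sup A$, where $A$ is the set on which $\Gamma$ weakly points upward. Closed-graph limits from the left give a point $z\ge y^*$ in $\Gamma(y^*)$, and limits from the right give a point $w\le y^*$. Convexity of the single value $\Gamma(y^*)$ then yields the fixed point, with no machinery beyond Bolzano--Weierstrass. This is exactly what the paper needs, since the mean field $\tilde m$ is scalar, and it would make the existence proof self-contained. The price is that it does not extend to vector-valued states, such as the heterogeneous-interaction or graphon extension mentioned in \Cref{sec:6}, where the general theorem is required.

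One small remark: closedness of each $\Gamma(y)$ follows from the closed graph alone, without compactness. It is also never used in your argument; only nonempty values, convexity of $\Gamma(y^*)$, and the closed graph are.
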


\begin{assumption} \label{assum_NE}
    \begin{enumerate}
        \item[(A1)] There is a maximal effort $E_{\max}<\infty$; hence, for all $\theta\in\Theta$, $e(\theta), \tilde m\in[0,E_{\max}]$.
        \item[(A2)] The functions $\mu(z), \lambda(z)$ are continuous and convex on $[0, (1+x)E_{\max}]$.
        \item[(A3)] $\gamma(\theta), c(\theta), \sigma(\theta)$ are measurable and uniformly bounded. Namely, there are $\gamma_{\max},c_{\max},\sigma_{\max}\in(0,\infty)$, such that,
        \[
        0 < \gamma(\theta) \le \gamma_{\max}, \ 0 \leq c(\theta) \le c_{\max}, \ 0 \leq \sigma(\theta) \le \sigma_{\max} , \qquad \forall \theta \in \Theta.
        \]
    \end{enumerate}
\end{assumption}

Under Assumption (A1), the feasible effort set is restricted to $[0,E_{\max}]$. Thus, in the existence and uniqueness analysis below, we use the constrained optimal-effort correspondences
\begin{equation} \label{eq: optimal effort correspondences}
E^{\mathrm{out}}(\theta;\tilde m)
:=
\argmin_{e\in[0,E_{\max}]}
{\rm CE}^{\mathrm{out}}(e;\theta,\tilde m),
\qquad
E^{\mathrm{in}}(\theta;\tilde m)
:=
\argmin_{e\in[0,E_{\max}]}
{\rm CE}^{\mathrm{in}}(e;\theta,\tilde m).
\end{equation}

\begin{remark}
    The assumption above ensures the existence of equilibrium from the mathematical perspective and has economic meaning.  Assumption (A1) ensures the compactness of the decision space, which is necessary for applying the Kakutani fixed-point theorem. It reflects finite resource constraints in economic sense, such as a physical limit or a budget ceiling for security investments. Assumption (A2) assumes the continuity and convexity of $\mu$ and $\lambda$, which ensures the upper hemicontinuity and convexity of best-response correspondences, which are essential for the Berge's Maximum Theorem. It also represents the diminishing marginal returns of security effort. Economically, one may also impose $\mu'(z) < 0$ and $\lambda'(z) \le 0$ when derivatives exist, reflecting the notion that prevention effort reduces expected loss and risk exposure. These monotonicity conditions are not needed for the existence proof. Assumption (A3) ensures that individual best responses are measurable selections, making the population-level aggregate effort well-defined. In reality, it accounts for the diversity of risk preferences and costs in the market while ensuring the population behavior remains statistically tractable.
\end{remark}

\begin{theorem} \label{thm_NE}
    Given a contract $(p, \alpha, \beta)$ and a population distribution $\pi$ over types $\theta = (\gamma, c, \sigma)$, under \Cref{assum_NE}, there exists a mean-field Nash equilibrium 
    \[
    (\tilde{m}, \{e^{\mathrm{in}}(\theta; \tilde{m}), e^{\mathrm{out}}(\theta;\tilde m), q(\theta; \tilde m)\}_{\theta \in \Theta}).
    \]
\end{theorem}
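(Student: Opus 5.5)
We will recast Definition~\ref{defn: equilibrium} as a fixed point of a set-valued map on $K=[0,E_{\max}]$ and apply the Kakutani Fixed-Point Theorem. For $\tilde m\in K$ and $\theta\in\Theta$, define the minimized certainty equivalents
\[
V^{\mathrm{in}}(\theta;\tilde m):=\min_{e\in K}{\rm CE}^{\mathrm{in}}(e;\theta,\tilde m),\qquad V^{\mathrm{out}}(\theta;\tilde m):=\min_{e\in K}{\rm CE}^{\mathrm{out}}(e;\theta,\tilde m).
\]
Define the individual response correspondence $R(\theta;\tilde m)\subset K$ as follows. It equals $E^{\mathrm{in}}(\theta;\tilde m)$ if $V^{\mathrm{in}}<V^{\mathrm{out}}$. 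It equals $E^{\mathrm{out}}(\theta;\tilde m)$ if $V^{\mathrm{in}}>V^{\mathrm{out}}$. In the indifference case it equals $\{qa+(1-q)b:\ q\in[0,1],\ a\in E^{\mathrm{in}},\ b\in E^{\mathrm{out}}\}$. Set
\[
\Gamma(\tilde m):=\int_\Theta R(\theta;\tilde m)\,d\pi(\theta),
\]
understood as the Aumann integral, i.e.\ the set of integrals of measurable selections. A fixed point $\tilde m\in\Gamma(\tilde m)$ then gives an equilibrium. One must decompose the chosen selection back into measurable $e^{\mathrm{in}},e^{\mathrm{out}},q$; this is handled in the last paragraph.

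\emph{Pointwise properties.} By (A2) and (A3), for fixed $\theta$ the functions ${\rm CE}^{\mathrm{in}}$ and ${\rm CE}^{\mathrm{out}}$ are jointly continuous in $(e,\tilde m)$ and convex in $e$. Convexity holds because $e\mapsto \mu(e+x\tilde m)$ and $e\mapsto\lambda(e+x\tilde m)$ are convex, while $(1-\beta)\ge0$ and $\gamma>0$. Berge's Maximum Theorem on the compact set $K$ then shows that $E^{\mathrm{in}},E^{\mathrm{out}}$ are nonempty, compact, convex (intervals) and upper hemicontinuous in $\tilde m$, and that $V^{\mathrm{in}},V^{\mathrm{out}}$ are continuous. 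Consequently $R(\theta;\cdot)$ has nonempty compact convex values: in the mixed case it is the convex hull of two intervals, which is an interval. It also has closed graph in $\tilde m$. To see this, take $\tilde m_n\to\tilde m$ and $r_n\in R(\theta;\tilde m_n)$ with $r_n\to r$. If the strict inequality holds at $\tilde m$, it holds for large $n$ by continuity of $V^{\mathrm{in}}-V^{\mathrm{out}}$, and closedness of $E^{\mathrm{in}}$ or $E^{\mathrm{out}}$ gives the claim. If there is equality at $\tilde m$, then each $r_n$ lies in $\mathrm{co}(E^{\mathrm{in}}(\theta;\tilde m_n)\cup E^{\mathrm{out}}(\theta;\tilde m_n))$, and upper hemicontinuity together with compactness of $[0,1]$ gives $r\in R(\theta;\tilde m)$.

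\emph{Aggregation.} We first need $\theta\mapsto R(\theta;\tilde m)$ to be measurable so that the Aumann integral is nonempty. The map $(\theta,e)\mapsto{\rm CE}(e;\theta,\tilde m)$ is Carathéodory, being measurable in $\theta$ by (A3) and continuous in $e$. Hence the argmin correspondences are measurable with compact values, and $V^{\mathrm{in}},V^{\mathrm{out}}$ are measurable. The sets $\{V^{\mathrm{in}}<V^{\mathrm{out}}\}$, $\{V^{\mathrm{in}}>V^{\mathrm{out}}\}$ and $\{V^{\mathrm{in}}=V^{\mathrm{out}}\}$ are therefore measurable, so $R$ is measurable. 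The Kuratowski--Ryll-Nardzewski theorem then yields a measurable selection, which is bounded by $E_{\max}$ and hence integrable. So $\Gamma(\tilde m)\neq\emptyset$ and $\Gamma(\tilde m)\subset K$. Since the values are intervals, $\Gamma(\tilde m)=[\int\underline r\,d\pi,\int \overline r\,d\pi]$, where $\underline r=\min R$ and $\overline r=\max R$ are measurable. This set is convex and compact, which one could also get from Lyapunov-type convexity. Closed graph of $\Gamma$ follows from the standard Fatou-type result for Aumann integrals of uniformly bounded correspondences with closed graph: if $\tilde m_n\to\tilde m$ and $y_n=\int r_n\,d\pi\to y$, pass to a weakly convergent subsequence of $r_n$ in $L^1(\pi)$. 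The weak limit $r$ satisfies $r(\theta)\in\mathrm{co}\,\limsup_n R(\theta;\tilde m_n)\subset R(\theta;\tilde m)$ almost everywhere, using Mazur's lemma and pointwise closed graph together with convexity. Hence $y=\int r\,d\pi\in\Gamma(\tilde m)$. Alternatively, one can use the explicit interval description: $\overline r(\theta;\cdot)$ is upper semicontinuous and $\underline r(\theta;\cdot)$ is lower semicontinuous, so Fatou's lemma gives closedness directly. Kakutani now produces $\tilde m^*\in\Gamma(\tilde m^*)$.

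\emph{Recovering the equilibrium objects; main obstacle.} Take a measurable selection $r\in R(\cdot;\tilde m^*)$ with $\int r\,d\pi=\tilde m^*$. On the strict sets, let $q=1$ or $q=0$, set the active effort equal to $r$, and let the inactive effort be any measurable selection of the other argmin. On the indifference set, write $r=qa+(1-q)b$ measurably. Take $a=\min E^{\mathrm{in}}$ and $b=\max E^{\mathrm{out}}$ if $\min E^{\mathrm{in}}\le r\le\max E^{\mathrm{out}}$, and symmetric choices otherwise. Then solve for $q$ as a measurable ratio, setting $q=1$ when $a=b$. The resulting tuple satisfies conditions (1)--(3) of Definition~\ref{defn: equilibrium}. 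I expect the main difficulty to be the measurability and closed-graph step for the Aumann integral. That is where pointwise Berge arguments must be lifted to the population level, and where care with the indifference set is needed.
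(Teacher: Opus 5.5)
Your proposal is correct and follows essentially the same route as the paper. It uses the same expected-effort correspondence (your $R$ is the paper's $\mathcal E$), the same Aumann integral $\Gamma$, the measurable-maximum and Kuratowski--Ryll-Nardzewski selection arguments, Kakutani's theorem, and a measurable decomposition into $(e^{\mathrm{in}},e^{\mathrm{out}},q)$ on the indifference set.

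The differences are only in technical execution, and both are valid. First, you use the one-dimensional interval structure to write $\Gamma(\tilde m)=[\int\underline r\,d\pi,\int\overline r\,d\pi]$ and close the graph via semicontinuity of the endpoints plus Fatou's lemma, where the paper cites Yannelis's closed-graph theorem for Aumann integrals. Second, you decompose using interval endpoints rather than the paper's metric projections.
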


\begin{proof}[Sketch of the proof.] The proof relies on a fixed-point argument. For each candidate mean field $\tilde m \in [0,E_{\max}]$, we characterize the individual inside and outside best responses. A primary technical challenge arises when aggregating these heterogeneous responses: taking the population average requires evaluating an Aumann integral of a correspondence. To ensure this integral is mathematically well-defined, we rigorously establish the weak measurability of the agents' piecewise best-response correspondences and deploy measurable selection theorems (e.g., the Kuratowski--Ryll-Nardzewski theorem) to construct the set-valued aggregate mean-field map. We then verify that this aggregate correspondence has nonempty, convex values and a closed graph, allowing us to apply the Kakutani fixed-point theorem to obtain a fixed point $\tilde{m}$. Finally, we reverse-engineer this aggregated expected effort to reconstruct explicit, measurable individual effort choices and a mixed participation probability. This yields a complete mean-field Nash equilibrium. The full proof is deferred to \Cref{sec: proof of existence}.
\end{proof}

\color{black}

\subsection{Uniqueness of Equilibrium}\label{sec:3.5}
Kakutani fixed-point theorem guarantees existence of a fixed point for a set-valued best-response correspondence, but it does not imply uniqueness. In general, multiple fixed points may arise because the individual inside and outside best responses may be set-valued, and because the participation decision may remain non-unique on the indifference set.

In this section, we impose additional regularity and non-degeneracy assumptions under which the aggregate mean-field map becomes single-valued and strictly contractive on $[0,E_{\max}]$. The unique mean-field equilibrium then follows from the Banach fixed point theorem. The conditions in the assumption below are stronger than the conditions in \Cref{assum_NE} and require more compactness, regularity, and convexity of the functions involved.

\begin{assumption}[Assumptions for Uniqueness]\label{assum_unique}
\leavevmode
\begin{enumerate}
\item[(U1)] There is a maximal effort $E_{\max}<\infty$; hence, for all $\theta\in\Theta$, $e(\theta), \tilde m\in[0,E_{\max}]$.

\item[(U2)] The functions $\mu,\lambda$ are continuous and strictly convex on $[0, (1+x) E_{\max}]$. Moreover, they are Lipschitz continuous on this interval; namely, there exist $L_\mu, L_\lambda>0$, such that:
\[
|\mu(z_1)-\mu(z_2)|\le L_\mu|z_1-z_2|,\qquad
|\lambda(z_1)-\lambda(z_2)|\le L_\lambda|z_1-z_2|.
\]

\item[(U3)] There exist constants $0<\gamma_{\min}<\gamma_{\max}<\infty$, $0<c_{\max}<\infty,$ and $0<\sigma_{\min}<\sigma_{\max}<\infty$ such that
\[
\Theta=[\gamma_{\min},\gamma_{\max}]
\times[0,c_{\max}]
\times[\sigma_{\min},\sigma_{\max}].
\]

\item[(U4)] The screening-based discount coefficient and the coverage ratio, respectively,  satisfy: $\alpha>0$ and $\beta\in[0,1)$.

\item[(U5)] The type distribution $\pi$ admits a density $f$ with respect to Lebesgue measure on
$\Theta$
and\footnote{The global uniform bound on the joint density function, $\|f\|_{\infty} \le C_f$, is mathematically stronger than strictly necessary to guarantee uniqueness. Because the contraction property relies exclusively on the mass of marginal agents shifting their participation decisions, this assumption could be relaxed to a bound on the conditional density of the idiosyncratic shock $\sigma$. Formally, it suffices to define the marginalized supremum $f^{\sup}(\gamma, c) := \sup_{\sigma} f(\sigma \mid \gamma, c) f(\gamma, c)$ and assume only that this function is finite and integrable over the parameter space. We maintain the stronger global uniform bound in our primary assumptions because it significantly streamlines the subsequent proof and is not overly restrictive for standard distributional choices.} $\|f\|_{\infty}\le C_f$.

\item[(U6)] Define
\begin{align*}
    &c_\sigma:=\gamma_{\min}\alpha^2\sigma_{\min},\\
    &C_m:=x\Big(2c_{\max}+\alpha+(2-\beta)L_\mu +\tfrac12\gamma_{\max}\big((1-\beta)^2+1\big)L_\lambda\Big),\\
    &C_q:=\frac{C_f}{c_\sigma} (\gamma_{\max}-\gamma_{\min})\,c_{\max}\,C_m.
\end{align*}
Assume
\[
3x+E_{\max}C_q<1.
\]
\end{enumerate}
\end{assumption}

\begin{remark}
    Assumptions (U1)--(U6) strengthen (A1)-(A3) to obtain uniqueness rather than only existence. The additional conditions ensure single-valued best responses and a contraction property of the mean-field map. 
    
    Assumption (U2) replaces convexity by strict convexity and adds Lipschitz continuity of $\mu$ and $\lambda$. Strict convexity guarantees that individual optimization problems admit unique minimizers, so the best-response correspondences become singletons. The Lipschitz property is used to control how optimal efforts vary with the mean-field. 
    
    Assumptions (U3)--(U5) are used to control the participation decision. In particular, they imply that the indifference set $\{\Delta {\rm CE} = 0\}$ is $\pi$-null. Hence, although the mixed participation rule allows $q(\theta; \tilde m) \in [0, 1]$ on this set, this freedom is irrelevant up to $\pi$-a.e.~equality and does not affect the aggregate equilibrium objects.
    
    Finally, (U6) ensures that the Lipschitz constant of the aggregate equilibrium map is strictly smaller than 1, which is essential for the Banach fixed point theorem. Note that both $C_m$ and hence $C_q$ are proportional to $x$, which is the interdependence parameter. Therefore, the condition $3x+E_{\max}C_q<1$ is effectively a small-interaction requirement: it imposes that the interdependence parameter $x$ be sufficiently small relative to the model primitives so that the overall Lipschitz constant of the mean-field map remains strictly below one.
\end{remark}

\begin{theorem}\label{thm_NEuniq}
    Fix a contract $(p, \alpha, \beta)$. Under \Cref{assum_unique}, the individual optimal effort levels $e^{\mathrm{in}}(\theta; \tilde{m})$ and $e^{\mathrm{out}}(\theta;\tilde m)$ are uniquely defined for each type $\theta$ and each $\tilde m\in[0,E_{\max}]$. 
    
    Moreover, define the mean-field aggregation function $\Phi: [0, E_{\max}] \to [0, E_{\max}]$ as
    \begin{equation} \label{eq: Phi}
        \Phi(\tilde{m}) = \int_{\Theta} \Big[ q(\theta; \tilde m) e^{\mathrm{in}}(\theta; \tilde{m}) + (1-q(\theta; \tilde m)) e^{\mathrm{out}}(\theta; \tilde{m}) \big] d\pi(\theta).
    \end{equation}
    Then $\Phi$ is well defined and Lipschitz continuous on $[0,E_{\max}]$ with Lipschitz constant $<1$. In particular, $\Phi$ is a contraction mapping on the complete metric space $[0,E_{\max}]$. Consequently, there exists a unique fixed point $\tilde m^* \in [0,E_{\max}]$ such that
    \[
    \tilde m^* = \Phi(\tilde m^*).
    \]
    The associated mixed-participation mean-field Nash equilibrium is unique up to $\pi$-a.e.~equality. Hence all aggregate equilibrium quantities are unique.
\end{theorem}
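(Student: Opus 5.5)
Uniqueness of the individual efforts is immediate. For fixed $\theta$ and $\tilde m$, the map $e\mapsto {\rm CE}^{\mathrm{in}}(e;\theta,\tilde m)$ is a linear term plus $(1-\beta)\mu(e+x\tilde m)$ plus $\tfrac12\gamma(1-\beta)^2\lambda(e+x\tilde m)$. By (U2) and (U4) ($1-\beta>0$, $\gamma\ge\gamma_{\min}>0$) it is strictly convex on the compact interval $[0,E_{\max}]$, so it has a unique minimizer. The same argument applies to ${\rm CE}^{\mathrm{out}}$. Hence $E^{\mathrm{in}},E^{\mathrm{out}}$ are singletons, and Berge's theorem makes $e^{\mathrm{in}},e^{\mathrm{out}}$ jointly continuous in $(\theta,\tilde m)$, hence measurable.

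Next I would set $\Delta{\rm CE}(\theta;\tilde m):=\min{\rm CE}^{\mathrm{in}}-\min{\rm CE}^{\mathrm{out}}$. The value $\min {\rm CE}^{\mathrm{in}}$ depends on $\sigma$ only through the additive term $\tfrac12\gamma\alpha^2\sigma^2$, while $\min {\rm CE}^{\mathrm{out}}$ does not depend on $\sigma$. So $\partial_\sigma\Delta{\rm CE}=\gamma\alpha^2\sigma\ge c_\sigma>0$ on $\Theta$. By this strict monotonicity in $\sigma$, for each $(\gamma,c)$ the indifference set is at most one point $\sigma^*(\gamma,c;\tilde m)$. Fubini and the density (U5) then show that $\{\Delta{\rm CE}=0\}$ is $\pi$-null. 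Thus $q=\mathbf 1\{\Delta{\rm CE}<0\}$ $\pi$-a.e., $\Phi$ is well defined independently of the choice of $q$ on the null set, and $\Phi$ maps into $[0,E_{\max}]$.

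For the contraction estimate, fix $\tilde m_1,\tilde m_2$ and split
\[
\Phi(\tilde m_1)-\Phi(\tilde m_2)=\int\big[q_1(e^{\mathrm{in}}_1-e^{\mathrm{in}}_2)+(1-q_1)(e^{\mathrm{out}}_1-e^{\mathrm{out}}_2)\big]d\pi+\int (q_1-q_2)(e^{\mathrm{in}}_2-e^{\mathrm{out}}_2)\,d\pi .
\]
The second integral is bounded by $E_{\max}\|q_1-q_2\|_{L^1(\pi)}$. For the first, I need Lipschitz dependence of the minimizers on $\tilde m$. I would exploit that the objective depends on $\tilde m$ only through the shift $e+x\tilde m$. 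Writing $z=e+x\tilde m$, the minimizer in $z$ of a strictly convex function of $z$ plus a linear term in $e$ is fixed, except where the constraint $e\in[0,E_{\max}]$ binds. Projection onto the constraint interval then gives $|e_1-e_2|\le x|\tilde m_1-\tilde m_2|$ in each of the in/out cases. Collecting the contributions, including a bound on $|\Delta e^{\mathrm{in}}-\Delta e^{\mathrm{out}}|$ where needed, I expect to reach the $3x$ term.

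The main obstacle is the $L^1$ bound $\|q_1-q_2\|_{L^1(\pi)}\le C_q|\tilde m_1-\tilde m_2|$. By the envelope/Lipschitz properties of $\mu,\lambda$ and the bounds on the efforts, $|\Delta{\rm CE}(\theta;\tilde m_1)-\Delta{\rm CE}(\theta;\tilde m_2)|\le C_m|\tilde m_1-\tilde m_2|$, where $C_m$ collects $x$ times the slopes $2c_{\max}+\alpha+(2-\beta)L_\mu+\tfrac12\gamma_{\max}((1-\beta)^2+1)L_\lambda$. Now $q_1\ne q_2$ only where $\Delta{\rm CE}(\cdot;\tilde m_1)$ and $\Delta{\rm CE}(\cdot;\tilde m_2)$ have opposite signs. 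For fixed $(\gamma,c)$, monotonicity in $\sigma$ with slope at least $c_\sigma$ means this happens on a $\sigma$-interval of length at most $C_m|\tilde m_1-\tilde m_2|/c_\sigma$; this is the implicit function theorem step applied to the boundary $\sigma^*$. Integrating with density at most $C_f$ over $(\gamma,c)\in[\gamma_{\min},\gamma_{\max}]\times[0,c_{\max}]$ gives exactly $C_q|\tilde m_1-\tilde m_2|$. The total Lipschitz constant is then at most $3x+E_{\max}C_q<1$ by (U6), so Banach gives the unique fixed point $\tilde m^*$. Uniqueness of the equilibrium up to $\pi$-a.e. equality follows because the efforts are unique and $q$ is determined off a null set.
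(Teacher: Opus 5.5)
Your proposal is correct and follows essentially the same route as the paper: the shift $z=e+x\tilde m$ with projection of the $\tilde m$-independent minimizer gives $x$-Lipschitz efforts, strict monotonicity of $\Delta{\rm CE}$ in $\sigma$ plus Fubini makes the indifference set $\pi$-null, the $C_m/c_\sigma$ bound on the threshold shift integrated against $C_f$ gives the $L^1(\pi)$ bound on $q$, and Banach finishes. The only difference is in the final split, and it is an improvement: since $|q_1a+(1-q_1)b|\le\max(|a|,|b|)$, your first integral is bounded by $x|\tilde m_1-\tilde m_2|$, so your decomposition gives the sharper constant $x+E_{\max}C_q$ and you never need the ``$3x$'' term, which in the paper comes from separating off $\int(e^{\mathrm{out}}_1-e^{\mathrm{out}}_2)\,d\pi$ and bounding $|\Delta e(\theta;\tilde m_1)-\Delta e(\theta;\tilde m_2)|\le 2x|\tilde m_1-\tilde m_2|$.
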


\begin{proof}[Sketch of the proof.] Under \Cref{assum_unique}, the inside and outside optimal effort choices are uniquely defined for each type $\theta$ and each $\tilde m \in [0,E_{\max}]$. The first step is to show that these optimal effort mappings are Lipschitz continuous in $\tilde m$. The second step is to prove that for each fixed $\tilde m$, the participation probability $q(\theta; \tilde m)$ is unique for $\pi$-a.e.~$\theta$. The third step is to establish an $L^1(\pi)$-Lipschitz bound for $q(\cdot; \tilde m)$. Combining these estimates yields that the aggregate mean-field map $\Phi$ is Lipschitz continuous on $[0,E_{\max}]$, with Lipschitz constant strictly smaller than $1$ by assumption (U6). Therefore, $\Phi$ is a contraction on $[0,E_{\max}]$, and the Banach fixed-point theorem implies the existence and uniqueness of a fixed point $\tilde m^*$. The associated mixed-participation equilibrium is therefore unique up to $\pi$-a.e.~equality. The full proof is deferred to \Cref{sec: proof of uniqueness}.
\end{proof}

\subsection{Numerics}
\label{sec:3.6}

In this subsection, we illustrate the lower-level mean-field equilibrium under a fixed contract and examine how the equilibrium responds to changes in key parameters. Throughout the numerical experiments below, the parameters are not chosen to satisfy the sufficient uniqueness condition $3x+E_{\max}C_q<1$. Instead, for each experiment, we numerically check the fixed-point residual $\Phi(\tilde m) - \tilde m$ over the compact feasible interval for $\tilde m$ and count the distinct numerical solutions detected by this residual check. In all reported experiments below, the check return a single fixed point, and we report the equilibrium outcome associated with that solution.

We first characterize the equilibrium fixed point, then study comparative statics of participation rate, and finally examine threshold and risk-aversion heterogeneity patterns that help motivate the later menu-design analysis.

In all numerical experiments in this subsection, we use the fixed contract $p=4.0$, $\alpha=0.2$, and $\beta=0.6$. Unless otherwise stated, the interdependence parameter is set to $x=1.0$. The theoretical effort bound $E_{\max}$ is chosen sufficiently large in every experiment so that it does not bind in the reported computations. Because different figures are designed to illustrate different aspects of the lower-level equilibrium, we report the distributional assumptions for $(\gamma,c,\sigma)$ separately for each experiment.

\subsubsection{Fixed-Point Characterization of the Equilibrium}
We first analyze the agents' mean-field Nash equilibrium under a fixed contract $(p,\alpha,\beta)$. For the fixed-point illustration, we use an illustrative calibration with $c=2.0$, $\sigma=1.0$, and risk-aversion values drawn from a Gaussian distribution centered around $1.0$.

For each parameter configuration, the equilibrium mean effort $\tilde m^*$ is determined by the fixed-point condition $\tilde m = \Phi(\tilde m)$, where $\Phi$ aggregates individual best responses. 
We verify that the fixed-point equation admits a solution, and illustrate the equilibrium as the intersection between $\Phi(\tilde m)$ and $y = \tilde{m}$ in Figure~\ref{fig:fixed_point_mapping}.
\begin{figure}[htbp]
\centering
\includegraphics[width=0.6\textwidth]{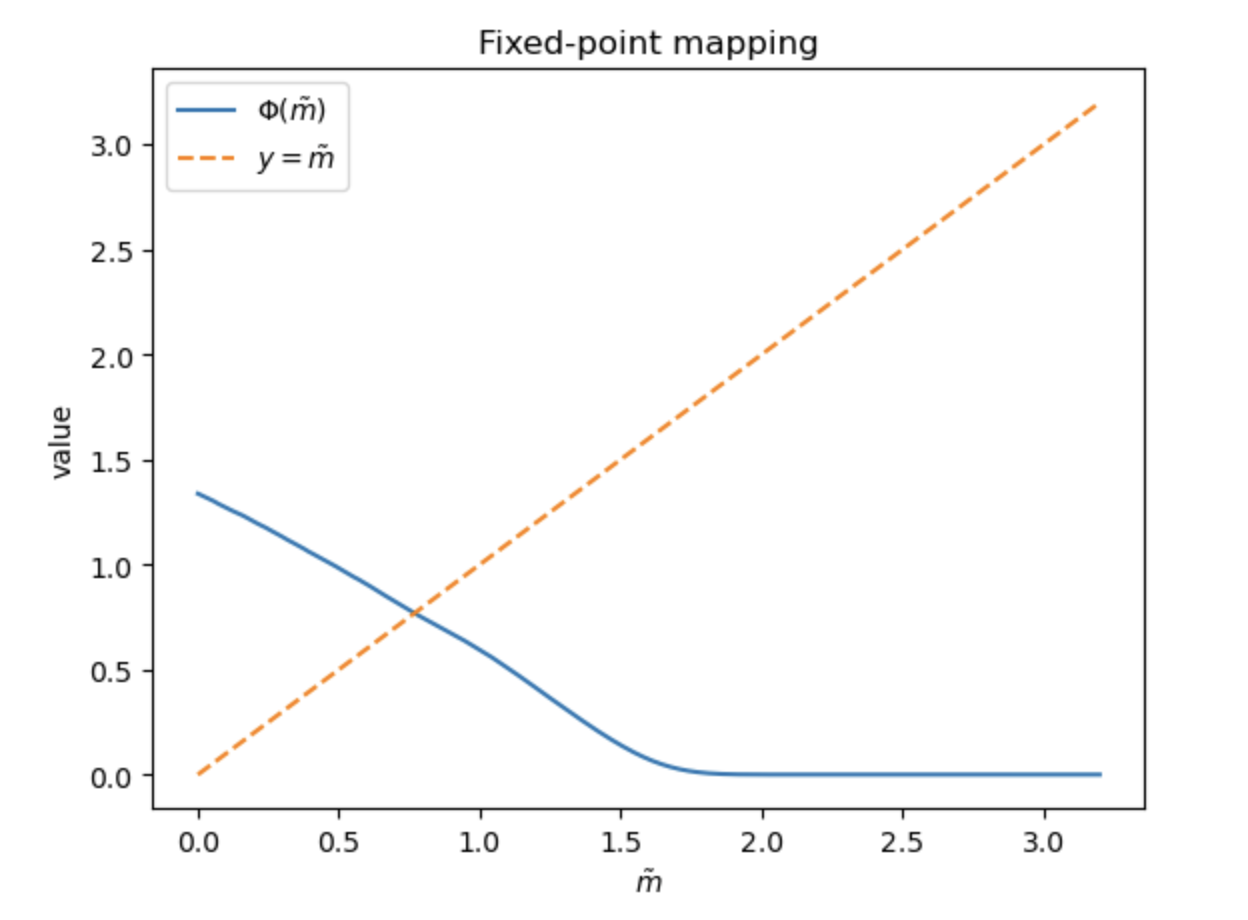}
\caption{Fixed-point Mapping.}
\label{fig:fixed_point_mapping}
\end{figure}

\subsubsection{Comparative Statics of the Participation Rate}
We next study how the equilibrium participation rate responds to model parameters. For the unique lower-level equilibrium computed in the numerical experiments, let $\tilde m^*$ denote the equilibrium mean effort and let $q(\theta;\tilde m^*)$ denote the associated participation probability. We define the equilibrium participation rate by
\[
\rho^* := \int_\Theta q(\theta;\tilde m^*)\,d\pi(\theta),
\]
which endogenously depends on both individual type distributions and the mean-field feedback. 
We next study how $\rho^*$ responds to key parameters, including the pre-screening noise variance $\sigma^2$, that is, the variance of the pre-screening noise, the average risk aversion $\bar{\gamma}$, the average effort cost $\bar{c}$, and the interdependence parameter $x$.

In this experiment, the baseline population has $N=5000$, with $\gamma=\max{10^{-4},\mathcal N(3.0,0.35^2)}$, $c=\max{0,\mathcal N(3.0,0.35^2)}$, and $\sigma=1.0$. 
\begin{figure}[htbp]
\centering
\includegraphics[width=0.9\textwidth]{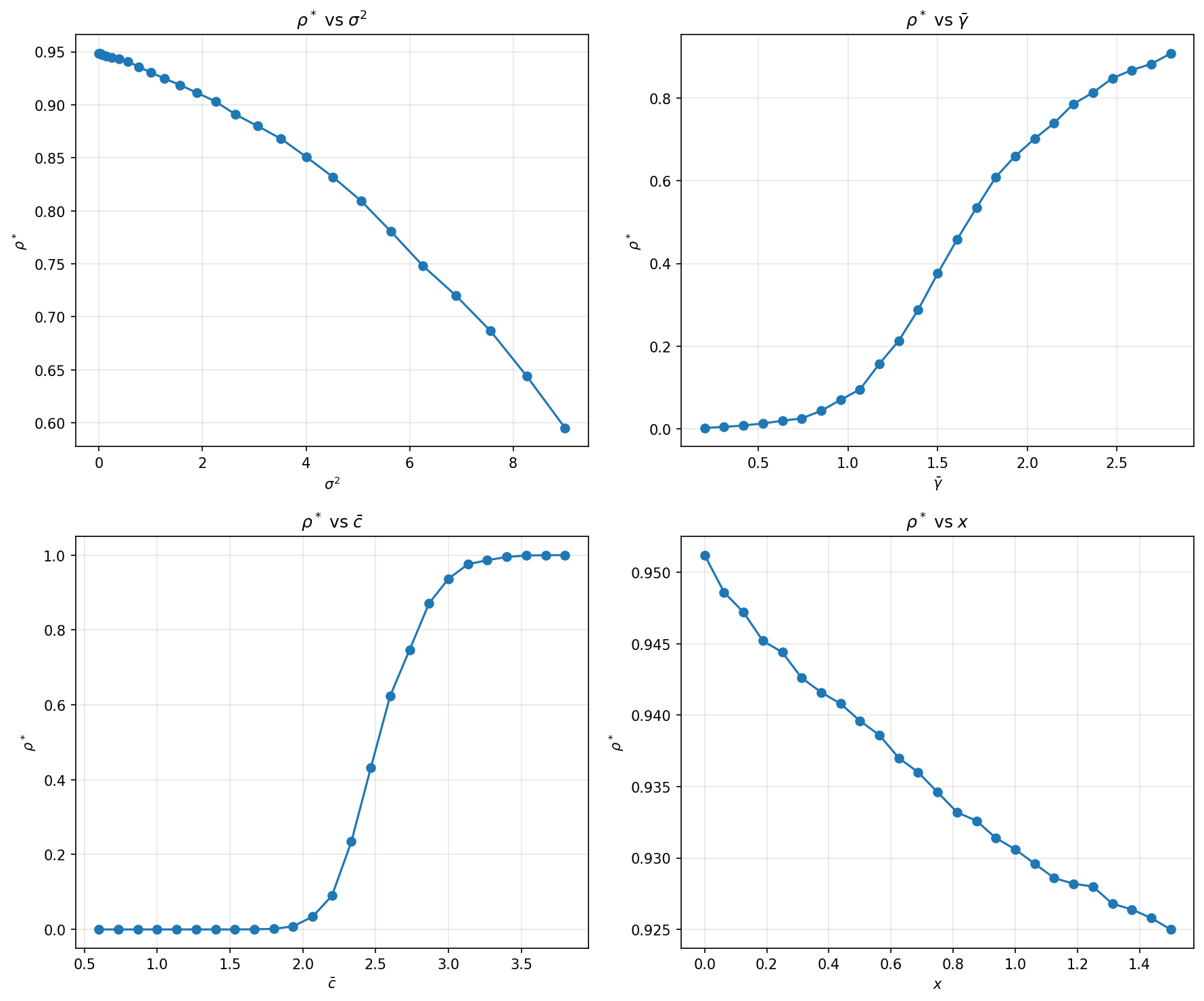}
\caption{Equilibrium participation rate $\rho^*$ as a function of key model parameters:
top-left: pre-screening noise variance $\sigma^2$; 
top-right: mean risk aversion $\bar{\gamma}$; 
bottom-left: mean effort cost $\bar{c}$; 
bottom-right: interdependence parameter $x$.}
\label{fig:rho_comparative_statics}
\end{figure}

Figure~\ref{fig:rho_comparative_statics} illustrates how the equilibrium participation rate $\rho^*$ responds to economically meaningful parameters. 

Participation decreases in the noisy risk variance $\sigma^2$, reflecting the intuitive effect that higher outcome uncertainty makes the contract less attractive when agents bear residual risk.

Participation increases in the average risk aversion $\bar{\gamma}$ and in the average effort cost $\bar{c}$. A higher average level of risk aversion implies that agents value insurance more strongly, making the contract relatively more appealing. 
Similarly, when effort becomes more costly on average, the outside option deteriorates, increasing the relative benefit of contractual protection.

Finally, participation decreases in the interdependence parameter $x$, but the magnitude of the effect is comparatively small. Economically, a larger $x$ means that each agent's loss exposure depends more strongly on the aggregate prevention effort in the population. This creates a spillover benefit that is also available to agents who stay outside the contract. Hence, for marginal types, the outside option becomes relatively more attractive, which reduces the incentive to participate. The magnitude of this effect is small because $x$ affects participation indirectly through the equilibrium mean-field adjustment $\tilde m^*$, rather than through the premium or effort-incentive terms directly.

\subsubsection{Participation Threshold in Pre-Screening Noise}

To further understand the participation threshold in equilibrium, we characterize the critical pre-screening noise level that makes an agent of type $(\gamma,c)$ indifferent between participating in the contract and remaining outside.

\begin{remark}[Threshold Structure in Pre-Screening Noise]
Recall that an agent of type $\theta=(\gamma,c,\sigma)$ participates in the contract if and only if
\[
\Delta {\rm CE}(\theta;\tilde m)
:=
{\rm CE}^{\mathrm{in}}(e^{\mathrm{in}}(\theta; \tilde{m});\theta,\tilde m)
-
{\rm CE}^{\mathrm{out}}(e^{\mathrm{out}}(\theta; \tilde{m});\theta,\tilde m)
\le 0.
\]
For fixed $(\gamma,c,\tilde m)$, the quantity $\Delta {\rm CE}(\theta;\tilde m)$ is increasing in $\sigma$, since the only $\sigma$-dependent term is $\tfrac12 \gamma \alpha^2 \sigma^2$ in ${\rm CE}^{\mathrm{in}}$. Therefore, whenever the participation region is nonempty, there exists a critical threshold $\sigma^*(\gamma,c;\tilde m)$ such that
\[
\sigma \le \sigma^*(\gamma,c;\tilde m)
\quad\Longrightarrow\quad
\Delta {\rm CE}(\theta;\tilde m)\le 0,
\]
and
\[
\sigma > \sigma^*(\gamma,c;\tilde m)
\quad\Longrightarrow\quad
\Delta {\rm CE}(\theta;\tilde m)>0.
\]
Thus, conditional on $(\gamma,c,\tilde m)$, participation is monotone in the pre-screening noise level. By contrast, no comparable monotonicity in $\gamma$ or $c$ is immediate, because the optimal effort choices depend endogenously on these parameters.
\end{remark}
For the numerical illustration below, if the indifference equation has no solution in the feasible range of $\sigma$, we define the threshold by the relevant boundary value. In particular, when $\Delta{\rm CE}(0;\gamma,c,\tilde m)>0$, the agent does not participate even under perfect screening, and we set $\sigma^*(\gamma,c;\tilde m)=0$.

For the numerical illustration below, we evaluate this threshold at the equilibrium mean effort $\tilde m^*$ and define $\sigma^*(\gamma, c)$ through the indifference condition
\[
\Delta{\rm CE}(\sigma^*(\gamma, c); \gamma, c, \tilde{m}^*) = 0.
\]
Figure~\ref{fig:sigmastar} illustrates the resulting surface $\sigma^*(\gamma,c)$.
\begin{figure}[htbp]
\centering
\includegraphics[width=0.75\textwidth]{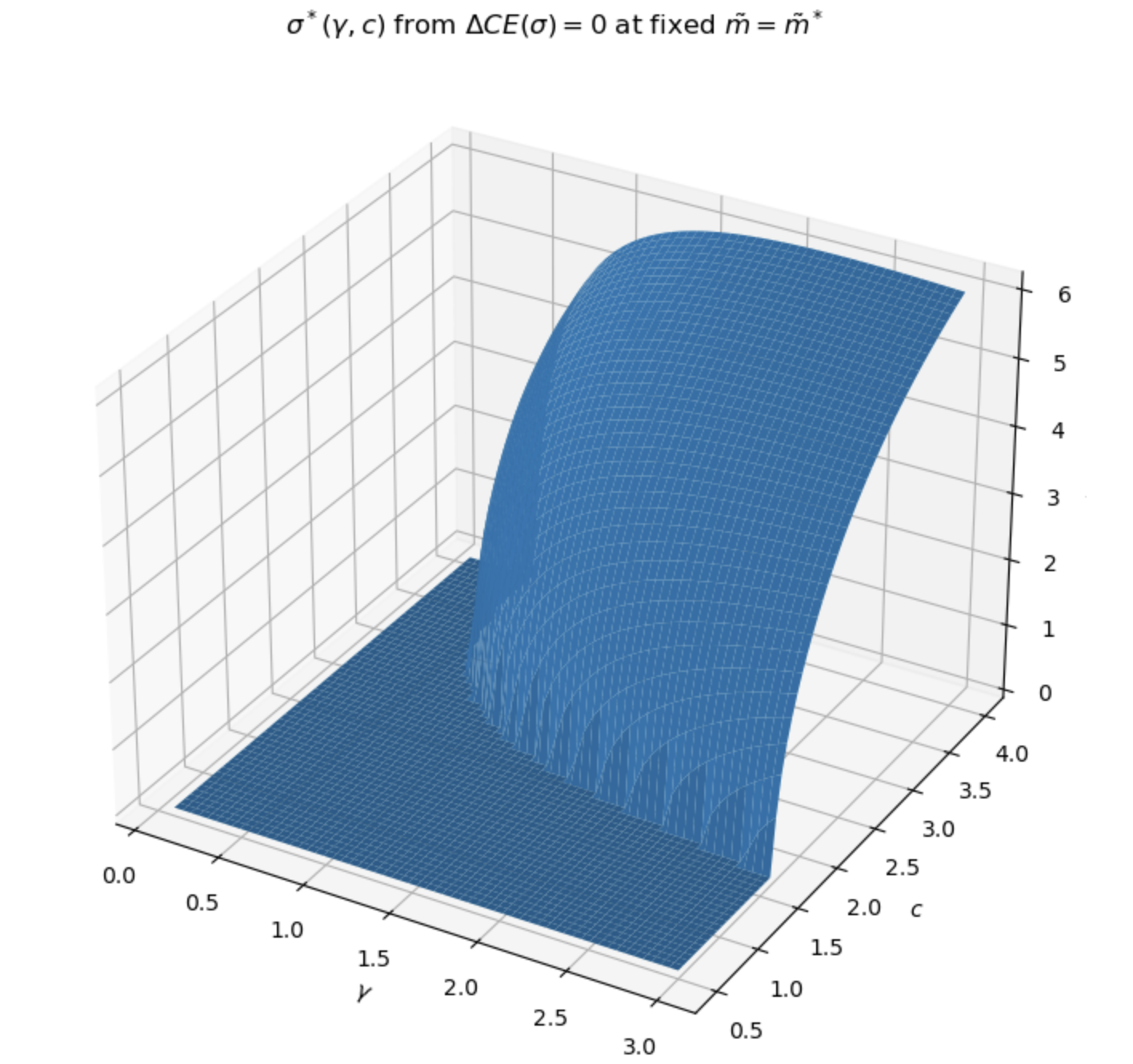}
\caption{Critical pre-screening noise level $\sigma^*(\gamma,c)$ solving 
$\Delta {\rm CE}(\sigma)=0$ at fixed equilibrium mean effort $\tilde m^*$.}
\label{fig:sigmastar}
\end{figure}
The surface $\sigma^*(\gamma,c)$ represents the maximum level of pre-screening noise under which agents of type $(\gamma,c)$ are still willing to participate in the contract. 
Agents with higher risk aversion $\gamma$ tolerate larger noise levels, as they value insurance more strongly. 
Similarly, agents with higher effort cost $c$ also exhibit greater willingness to participate, since their outside option becomes less attractive. 
For low values of $(\gamma,c)$, the threshold collapses to zero, indicating that such agents would not participate even under perfect screening. 

\subsubsection{Risk-Aversion Heterogeneity and Participation}

We now turn to the role of heterogeneity in risk aversion and examine how different population compositions affect equilibrium participation. The following examples compare several constructions of low- and high-risk-aversion populations in order to isolate how shifts in the distribution of $\gamma$ affect the equilibrium outcome.

Figure~\ref{fig:rhovsgamma1} compares equilibrium participation rates under three different constructions of low- and high-risk-aversion populations. For all three constructions below, we fix $N=5000$, $c=2.2$, and $\sigma=2.0$, and vary only the construction of the risk-aversion distribution:

\begin{itemize}
    \item \textbf{Homogeneous low/high risk baseline:} We consider two separate economies: one where all agents have a strictly low risk aversion $\gamma_L = 0.4$, and another where all have a high risk aversion $\gamma_H = 2.4$. Comparing the resulting equilibrium participation rates $\rho^*(\gamma_L)$ and $\rho^*(\gamma_H)$ isolates the pure effect of risk aversion on contract demand.
    
    \item \textbf{Threshold split within one distribution:} Starting from a single underlying distribution of $\gamma$ drawn from a truncated normal distribution $\mathcal N(1.5, 0.25)$ on $[\gamma_{\min},\gamma_{\max}]=[10^{-4},3.0]$, we partition the population at its empirical mean $\bar{\gamma}$. We then solve the mean-field equilibrium independently for the lower support $[\gamma_{\min},\bar{\gamma}]$ and the upper support $(\bar{\gamma},\gamma_{\max}]$. This approach preserves within-group heterogeneity while cleanly separating the population by risk-aversion strata.
    
    \item \textbf{Different distributions on fixed bounds:} We construct two distinct $\gamma$-distributions that share the same support interval $[\gamma_{\min},\gamma_{\max}]=[10^{-4},3.0]$ but possess different means, $0.6$ and $2.2$. By solving the equilibrium for each distribution separately, we isolate how shifts in the probability mass itself affect $\rho^*$.
\end{itemize}

Across all three constructions, higher average risk aversion leads to higher equilibrium participation, consistent with the insurance contract reducing residual risk exposure.

\begin{figure}[h!]
    \centering
    \includegraphics[width=0.95\textwidth]{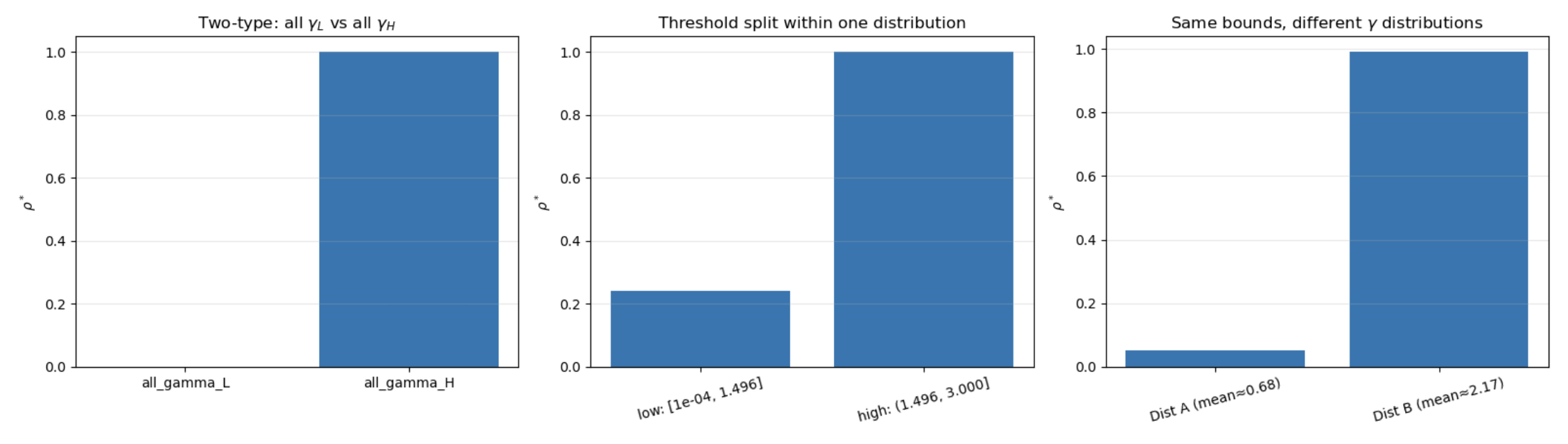}
    \caption{Equilibrium participation rates under three constructions of low and high $\gamma$ populations.}
    \label{fig:rhovsgamma1}
\end{figure}

Figure~\ref{fig:rhovsgamma2} extends the threshold-split construction. In the previous comparison, the low- and high-$\gamma$ subpopulations were treated as separate economies, and equilibria were computed independently. Here, both groups coexist in a single mean-field environment and jointly determine the fixed point $\tilde m^*$. Participation decisions are still computed at the individual level, but the aggregate effort entering the mean-field is shared.

Here we again fix $N=5000$, $c=2.2$, and $\sigma=2.0$, while drawing $\gamma$ from a truncated distribution on $10^{-4},5.0$ with mean $1.5$ and standard deviation $0.9$.

The three bars report, respectively, the total equilibrium participation rate $\rho_{\mathrm{tot}}$, the participation rate of the low-$\gamma$ group $\rho_{g1}$, and the participation rate of the high-$\gamma$ group $\rho_{g2}$, all evaluated at the common equilibrium $\tilde m^*$. The figure illustrates that while the overall equilibrium is jointly determined, participation behavior remains sharply heterogeneous across risk-aversion groups.

\begin{figure}[htbp]
    \centering
    \includegraphics[width=0.75\textwidth]{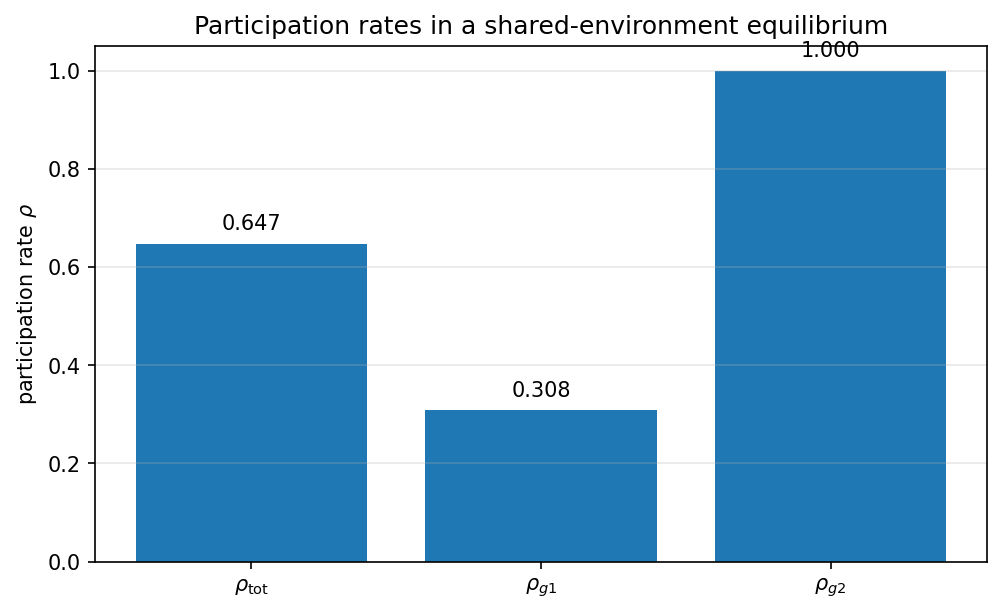}
    \caption{Participation rates in a shared-environment equilibrium with low- and high-$\gamma$ groups interacting through the mean-field.}
    \label{fig:rhovsgamma2}
\end{figure}

Across all constructions, the high-$\gamma$ group exhibits full participation. Under alternative parameter values, the low-$\gamma$ group may have zero participation while the high-$\gamma$ group may only have partial participation. These patterns suggest that a contract menu can be used to screen by risk aversion. For example, the insurer may offer a more expensive, higher-coverage contract that is attractive to high-$\gamma$ agents, and a cheaper, lower-coverage contract targeted at low-$\gamma$ agents, allowing self-selection across types.

\section{Upper-level Insurer's Contract-Design Problem} \label{sec:insurer}

We next formalize the insurer's problem as the upper-level problem in a Stackelberg game. The insurer acts as the leader by choosing contract $(p, \alpha, \beta)$ that maximizes his payoff, anticipating that the population of agents responds through a mean-field Nash equilibrium as characterized in \Cref{sec:MFG}. Therefore, the insurer does not optimize against arbitrary effort and participation, but against the equilibrium outcome induced by the chosen contract.

By designing the contract terms $(p, \alpha, \beta)$, the insurer does not merely price the risk; the principal actively manages the network's vulnerability. The coverage levels act as a direct lever for security investment. By adjusting the coverage ratio $\beta$ or the effort incentive term governed by $\alpha$, the insurer dictates the agents' `skin in the game.' When the interdependence parameter $x$  is high, the optimal contract must restrict coverage to force agents to internalize the negative externalities of their under-investment in security, thereby using the contract to elevate the baseline effort of the entire network.

For a given contract $k = (p, \alpha, \beta)$, let $\mathcal M(k)$ denote the set of mean-field Nash equilibria induced by $k$. Each equilibrium $M\in\mathcal M(k)$ is represented by
\[
M=\Bigl(\tilde m, \{e^{\mathrm{in}}(\theta),e^{\mathrm{out}}(\theta), q(\theta)\}\Bigr),
\]
where $\tilde m$ is the equilibrium mean effort, $e^{\mathrm{in}}(\theta)$ and $e^{\mathrm{out}}(\theta)$ are the equilibrium effort levels under participation and non-participation, respectively, and $q(\theta)$ is the participation probability.

For a type $\theta$ that accepts the contract and chooses effort $e^{\mathrm{in}}(\theta)$, the insurer's random payoff is
\[
p - \alpha S(e^{\mathrm{in}}(\theta); \theta) - \beta L(e^{\mathrm{in}}(\theta); \tilde{m}),
\]
where $S(e; \theta) = e + W(\theta)$, $W(\theta) \sim \mathcal{N}(0, \sigma^2(\theta))$, and $L(e; \tilde{m}) \sim \mathcal{N}\left( \mu(e + x \tilde{m}),\; \lambda(e + x \tilde{m}) \right)$. The insurer's expected profit is then
\[
p - \alpha e^{\mathrm{in}}(\theta) - \beta \mu(e^{\mathrm{in}}(\theta) + x \tilde{m}).
\]

Given a mean-field equilibrium $M = (\tilde m,\{e^{\mathrm{in}}(\theta),e^{\mathrm{out}}(\theta), q(\theta)\}) \in \mathcal{M}(k)$, the insurer's per-capita expected profit is
\begin{align*}
    J(k; M) = \int_{\Theta} \Big(p - \alpha e^{\mathrm{in}}(\theta)- \beta \mu(e^{\mathrm{in}}(\theta) + x \tilde{m})\Big) q(\theta) d\pi(\theta).
\end{align*}
\color{black}
When the lower-level mean-field Nash equilibrium is not unique, a given contract $k$ may induce multiple equilibrium payoffs through different equilibria $M\in\mathcal M(k)$. Thus the insurer’s payoff is not automatically single-valued as a function of the contract alone. Before formulating the upper-level optimization problem, we first specify the admissible contract set and recall that each admissible contract induces at least one lower-level equilibrium.

\begin{assumption} \label{assum_stack}
\leavevmode
\begin{enumerate}
    \item[(I1)] The insurer chooses the contract from a compact set
    \[
    \mathcal K:=[0,p_{\max}]\times[0,\alpha_{\max}]\times[0,\beta_{\max}],
    \]
    where $0 \leq p_{\max} < \infty$, $0 \leq  \alpha_{\max} < \infty$ and $0 \leq \beta_{\max} < 1$.

    \item[(I2)] \Cref{assum_NE} holds for every admissible contract $k = (p, \alpha, \beta) \in \mathcal K$. Consequently, the lower-level mean-field game among agents admits at least one Nash equilibrium. Denote by $\mathcal{M}(k)$ the set of all such equilibria.
\end{enumerate}
\end{assumption}

Under $\Cref{assum_stack}$, the equilibrium payoff set is nonempty for every admissible contract. To turn this payoff set into a unique choice for the insurer, we introduce a performance criterion, denoted by $\mathcal J(k)$. This criterion represents how the insurer evaluates a contract when multiple lower-level equilibria are possible.

\begin{definition}[insurer performance criterion] \label{defn: perform criterion}
    For each admissible contract $k\in\mathcal K$, let $\mathcal M(k)$ denote the set of lower-level mean-field Nash equilibria induced by $k$. An insurer performance criterion is a measurable function $\mathcal J: \mathcal K \to \mathbb R$ that assigns to each contract $k$ a single payoff value used by the insurer to evaluate that contract, possibly by aggregating or selecting from the equilibrium payoff set
    \[
    \mathbb J(k) := \{J(k; M): M \in \mathcal M(k)\}.
    \]
    We say that $\mathcal J$ is well defined if $\mathcal J(k)$ is a finite real number for every $k\in\mathcal K$.
\end{definition}
Under a chosen criterion $\mathcal J$, the insurer’s upper-level problem is to choose an admissible contract so as to maximize $\mathcal J(k)$ over $k\in\mathcal K$. In general, however, without stronger compactness and continuity assumptions on the equilibrium correspondence, this maximization problem need not admit an exact optimizer. We therefore formulate the general upper-level existence result in terms of $\varepsilon$-optimal contracts under the chosen criterion.

The following remark explains why the exact Stackelberg existence argument is delicate in the present model and why the $\varepsilon$-optimal formulation is useful.
\begin{remark}
    Although \Cref{assum_NE} guarantees that the agents' lower-level mean-field game admits at least one equilibrium for each fixed contract, this pointwise existence result is not by itself sufficient to establish the existence of an exact Stackelberg equilibrium for the insurer's upper-level problem. To obtain an exact optimizer, one would need additional compactness and closedness properties of the equilibrium correspondence across contracts, together with enough continuity of the insurer's payoff along equilibrium objects. These properties are delicate in the present model because the lower-level equilibrium is constructed through a relaxed participation and expected-effort correspondence.

    The main difficulty comes from the indifference set $\{\Delta {\rm CE}=0\}$. In the proof of \Cref{thm_NE}, the fixed point is formulated in terms of a feasible expected effort selection
    \[
        e(\theta)\in \mathcal E(\theta;\tilde m),
        \qquad
        \tilde m=\int_\Theta e(\theta)\,d\pi(\theta),
    \]
    where, on the indifference set, $\mathcal E(\theta;\tilde m)$ is the convex hull generated by inside and outside best responses. Each equilibrium therefore corresponds to some decomposition of this expected effort into $q(\theta)e^{\mathrm{in}}(\theta)+(1-q(\theta))e^{\mathrm{out}}(\theta)$. However, when $\Delta {\rm CE}=0$, the mixed participation rule does not pin down the value of $q(\theta)$, and different admissible choices may lead to different participation rates, mean efforts, and insurer payoffs. Moreover, as the contract varies, the selections of $e(\theta)$ on the indifference set need not vary continuously, and this lack of continuity can affect both the induced participation rule $q$ and the aggregate mean-field $\tilde m$.

    For this reason, instead of imposing strong global regularity assumptions on the full equilibrium correspondence in order to prove the existence of an exact Stackelberg equilibrium, we work with an upper-level $\varepsilon$-equilibrium notion. The lower-level agents' equilibrium remains exact for each fixed contract, while the insurer's contract is required to be $\varepsilon$-optimal with respect to the relevant equilibrium-performance criterion. This formulation avoids requiring a continuous selection of equilibrium objects across contracts.
\end{remark}

\begin{theorem} \label{thm: insurer}
    Under \Cref{assum_stack}, let $\mathcal J: \mathcal K \to \mathbb R$ be an insurer performance criterion. Suppose that $\mathcal J$ is well defined and bounded above on $\mathcal K$, so that
    \[
    \sup_{k \in \mathcal K} \mathcal J(k) < \infty.
    \]
    Then, for every $\varepsilon>0$, there exists an admissible contract $k_\varepsilon\in\mathcal K$ such that
    \[
    \mathcal J(k_{\varepsilon}) \geq \sup_{k \in \mathcal K} \mathcal J(k) - \varepsilon.
    \]
    We call such a contract an {\rm $\varepsilon$-optimal insurer contract} under the performance criterion $\mathcal J$.

    The approximation is only at the insurer’s upper-level optimization stage; the lower-level response remains an exact mean-field Nash equilibrium.
\end{theorem}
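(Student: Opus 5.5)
The statement is essentially the defining property of a finite supremum, so the plan is to reduce it to that fact once the supremum is shown to be a well-defined real number.

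First, set $V:=\sup_{k\in\mathcal K}\mathcal J(k)$ and check that $V\in\mathbb R$. By (I1), $\mathcal K$ is nonempty; it contains for instance $(0,0,0)$. By (I2) together with \Cref{thm_NE}, every $k\in\mathcal K$ induces a nonempty equilibrium set $\mathcal M(k)$. Hence the payoff set $\mathbb J(k)$ is nonempty, and the criterion of \Cref{defn: perform criterion} has real-valued data to act on. Since $\mathcal J$ is assumed well defined, $\mathcal J(k)\in\mathbb R$ for each $k$, so $V>-\infty$. The hypothesis $\sup_{k}\mathcal J(k)<\infty$ then gives $V<\infty$.

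Second, fix $\varepsilon>0$. Because $V-\varepsilon<V$ and $V$ is the least upper bound of $\{\mathcal J(k):k\in\mathcal K\}$, the number $V-\varepsilon$ is not an upper bound. So there is $k_\varepsilon\in\mathcal K$ with $\mathcal J(k_\varepsilon)>V-\varepsilon$, which is the desired inequality. Equivalently, one may take a maximizing sequence $k_n$ with $\mathcal J(k_n)\to V$ and choose $n$ large enough. Compactness of $\mathcal K$ is not needed for this step. It would only matter if one tried to pass to a limit point and obtain an exact optimizer, which the preceding remark explains is not generally available.

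Third, address the final sentence of the theorem. The contract $k_\varepsilon$ is evaluated through $\mathcal J(k_\varepsilon)$, which is computed from $\mathbb J(k_\varepsilon)$. That set is built from genuine elements $M\in\mathcal M(k_\varepsilon)$, that is, exact mean-field Nash equilibria in the sense of \Cref{defn: equilibrium}. The only approximation therefore enters through the choice of $k_\varepsilon$, and not in the lower-level response.

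There is no substantive obstacle. The one point that needs care is the first step: $V$ must be finite and the supremum must be over a nonempty set. This relies on \Cref{thm_NE} through (I2) and on the well-definedness of $\mathcal J$ to exclude the value $-\infty$.
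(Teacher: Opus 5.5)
Your proposal is correct and matches the paper's proof, which likewise observes that $\mathcal K$ is nonempty and $\mathcal J$ is real-valued and bounded above, so the supremum is finite, and then uses the definition of the supremum to pick $k_\varepsilon$. Your extra remarks are accurate but not needed for the argument: the nonemptiness of $\mathcal M(k)$ via (I2), and the point that compactness of $\mathcal K$ plays no role.
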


\begin{proof} Since $\mathcal K$ is nonempty and $\mathcal J$ is well-defined and bounded above, $\sup_{k\in\mathcal K}\mathcal J(k)<\infty$.
By the definition of the supremum, for every $\varepsilon > 0$, there exists at least one admissible contract $k_{\varepsilon} \in \mathcal K$ such that
\[
\mathcal J(k_{\varepsilon}) \geq \sup_{k \in \mathcal K} \mathcal J(k) - \varepsilon.
\]
Therefore, $k_\varepsilon$ is an $\varepsilon$-optimal insurer contract under the performance criterion $\mathcal J$. 
\end{proof}
The $\varepsilon$-approximation occurs only at the insurer’s upper-level optimization stage. For each chosen contract $k_\varepsilon$, the relevant lower-level responses are still exact mean-field Nash equilibria in $\mathcal M(k_\varepsilon)$. Hence, this result establishes the existence of an upper-level $\varepsilon$-optimal contract, not an approximate lower-level equilibrium.

\subsection{Examples of Insurer Performance Criteria}

The theorem above is stated for a general insurer performance criterion $\mathcal J$. We now record two consequences of this formulation. First, when the lower-level equilibrium is not unique, the insurer can evaluate a contract through worst- and best-equilibrium payoff envelopes, which summarize conservative and optimistic performance under equilibrium multiplicity. Second, under the additional uniqueness assumptions of $\Cref{assum_unique}$, the equilibrium response becomes single-valued, and the upper-level problem admits an exact Stackelberg equilibrium rather than only an $\varepsilon$-optimal contract.

\subsubsection{Worst- and Best-Equilibrium Performance} \label{sec: worst best}

When the lower-level equilibrium is not unique, a contract $k$ does not generally induce a single insurer payoff. Instead, it induces the payoff set
$
\mathbb J(k)
$. 
Two natural ways to evaluate this set are the worst-equilibrium and best-equilibrium payoff envelopes:
\[
\mathcal J^{\mathrm{worst}}(k) := \inf_{M \in \mathcal M(k)} J(k; M), \qquad \mathcal J^{\mathrm{best}}(k) := \sup_{M \in \mathcal M(k)} J(k; M).
\]
The worst-equilibrium criterion describes the insurer’s conservative performance under unfavorable equilibrium selection. It is useful when the insurer wants robustness against the possibility that agents coordinate on an equilibrium that is least favorable to the insurer. The best-equilibrium criterion describes the most favorable payoff compatible with the lower-level equilibrium set. It can be interpreted as an optimistic benchmark or as the attainable payoff when equilibrium selection is favorable to the insurer.

\begin{corollary} \label{cor: worst best}
    Under $\Cref{assum_stack}$, consider the worst- and best-equilibrium performance criteria defined by
    \[
    \mathcal J^{\mathrm{worst}}(k) := \inf_{M \in \mathcal M(k)} J(k; M), \qquad \mathcal J^{\mathrm{best}}(k) := \sup_{M \in \mathcal M(k)} J(k; M).
    \]
    Then both $\mathcal J^{\mathrm{worst}}$ and $\mathcal J^{\mathrm{best}}$ are well-defined insurer performance criteria defined in \Cref{defn: perform criterion} and are bounded above on $\mathcal K$. Consequently, for every $\varepsilon>0$, there exist admissible contracts $k_\varepsilon^{\mathrm{worst}},k_\varepsilon^{\mathrm{best}}\in\mathcal K$ such that
    \[
    \mathcal J^{\mathrm{worst}}(k_\varepsilon^{\mathrm{worst}}) \geq \sup_{k \in \mathcal K} \mathcal J^{\mathrm{worst}}(k) - \varepsilon, \qquad \mathcal J^{\mathrm{best}}(k_\varepsilon^{\mathrm{best}}) \geq \sup_{k \in \mathcal K} \mathcal J^{\mathrm{best}}(k) - \varepsilon.
    \]
    Thus the insurer admits $\varepsilon$-optimal contracts under both the conservative worst-equilibrium criterion and the optimistic best-equilibrium criterion.
\end{corollary}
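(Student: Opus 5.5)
The plan is to reduce the corollary to \Cref{thm: insurer}. It suffices to show that $\mathcal J^{\mathrm{worst}}$ and $\mathcal J^{\mathrm{best}}$ take finite real values on $\mathcal K$ and are bounded above uniformly in $k$. The $\varepsilon$-optimal contracts then come directly from the theorem.

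\textbf{Step 1: nonemptiness.} By (I2) of \Cref{assum_stack} and \Cref{thm_NE}, $\mathcal M(k)\neq\emptyset$ for every $k\in\mathcal K$. Hence $\mathbb J(k)$ is a nonempty subset of $\mathbb R$, and neither envelope is an infimum or supremum over the empty set.

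\textbf{Step 2: a uniform bound on $J(k;M)$.} Fix $k=(p,\alpha,\beta)\in\mathcal K$ and $M\in\mathcal M(k)$. We have $q(\theta)\in[0,1]$, $e^{\mathrm{in}}(\theta)\in[0,E_{\max}]$ by (A1), and $\tilde m\in[0,E_{\max}]$. So the argument $e^{\mathrm{in}}(\theta)+x\tilde m$ lies in the compact interval $[0,(1+x)E_{\max}]$. By (A2), $\mu$ is continuous there, so $\|\mu\|_\infty:=\max_{[0,(1+x)E_{\max}]}|\mu|<\infty$. The integrand therefore satisfies
\[
\bigl|\bigl(p-\alpha e^{\mathrm{in}}(\theta)-\beta\mu(e^{\mathrm{in}}(\theta)+x\tilde m)\bigr)q(\theta)\bigr|\le p_{\max}+\alpha_{\max}E_{\max}+\beta_{\max}\|\mu\|_\infty=:B.
\]
Measurability of the integrand follows from the measurability of $e^{\mathrm{in}}$ and $q$ in the equilibrium definition, together with the continuity of $\mu$. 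Since $\pi$ is a probability measure, $|J(k;M)|\le B$.

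One point needs care: $x$ is the model's fixed interdependence parameter and does not depend on $k$, so the interval, and hence $\|\mu\|_\infty$, is the same for every contract. The constant $B$ is therefore uniform in $k$ and $M$.

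\textbf{Step 3: the envelopes are finite and bounded.} Since $\mathbb J(k)$ is nonempty and contained in $[-B,B]$, both $\mathcal J^{\mathrm{worst}}(k)$ and $\mathcal J^{\mathrm{best}}(k)$ are finite real numbers in $[-B,B]$. This gives well-definedness in the sense of \Cref{defn: perform criterion} and $\sup_{k}\mathcal J^{\mathrm{worst/best}}(k)\le B<\infty$.

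\textbf{Main obstacle: measurability in $k$.} \Cref{defn: perform criterion} also asks that $\mathcal J$ be measurable in $k$, and an infimum or supremum over the possibly uncountable family $\mathcal M(k)$ need not be Borel measurable. The first approach would be to show that the graph of $k\mapsto\mathbb J(k)$ is analytic; the envelopes would then be universally measurable, for instance via a projection theorem. However, this measurability is not actually used in \Cref{thm: insurer}: that proof relies only on finiteness and boundedness from above, through the definition of the supremum. So the plan is to note that the $\varepsilon$-optimality argument goes through for any real-valued bounded-above function on $\mathcal K$, which suffices for the conclusion. Measurability would be treated at most as a remark, either interpreted in the universally measurable sense or dropped as inessential.
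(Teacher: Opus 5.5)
Your proposal is correct and follows the paper's proof step for step: nonemptiness of $\mathcal M(k)$ from (I2), the uniform bound $|J(k;M)|\le p_{\max}+\alpha_{\max}E_{\max}+\beta_{\max}\sup_{[0,(1+x)E_{\max}]}|\mu|$, finiteness of both envelopes in $[-B,B]$, and then an application of \Cref{thm: insurer}. Your remark on measurability in $k$ goes slightly beyond the paper, whose proof does not address the measurability requirement in \Cref{defn: perform criterion}; you are right that the $\varepsilon$-optimality argument uses only finiteness and boundedness from above, so the conclusion is unaffected.
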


\begin{proof}[Sketch of the proof]
The lower-level equilibrium set $\mathcal M(k)$ is nonempty for every admissible contract, so the payoff set $\mathbb J(k)$ is nonempty. The boundedness of admissible contracts, efforts, participation probabilities, and the continuity of $\mu$ on a compact domain imply that $J(k;M)$ is uniformly bounded over all $k \in \mathcal K$ and $M \in \mathcal M(k)$. Therefore, the worst- and best-equilibrium criteria are finite-valued and bounded above on $\mathcal K$. Applying $\Cref{thm: insurer}$ to each criterion gives the desired $\varepsilon$-optimal contracts. The full proof is deferred to \Cref{sec: proof of cor}.
\end{proof}

\subsubsection{Unique Equilibrium}

The general $\varepsilon$-optimal result above does not require uniqueness of the lower-level mean-field Nash equilibrium and therefore yields an $\varepsilon$-optimal insurer contract under any well-defined bounded-above performance criterion. We now show that, under the uniqueness assumptions in \Cref{assum_unique}, the insurer's upper level problem admits an exact Stackelberg equilibrium.

For the unique-equilibrium benchmark, we restrict attention to the smaller admissible set
\[
\mathcal K^{\mathrm{uniq}}
:=
[0,p_{\max}]\times[\alpha_{\min},\alpha_{\max}]\times[0,\beta_{\max}],
\qquad
0<\alpha_{\min}\leq \alpha_{\max}<\infty.
\]
This restriction keeps the effort-incentive coefficient bounded away from zero. It is needed for the threshold argument in the uniqueness proof, since the participation gap is strictly increasing in the pre-screening noise only when $\alpha>0$.

Under \Cref{assum_unique}, for any admissible contract $k \in \mathcal K^{\mathrm{uniq}}$, the lower-level mean-field game admits a unique equilibrium $M(k) \in \mathcal M(k)$ up to $\pi$-a.e.~equality. In particular, the equilibrium mean-field, the effort choices, and aggregate equilibrium quantities are unique, while the participation probability is uniquely determined for $\pi$-a.e.~$\theta$. In this case, the insurer's payoff is then single-valued as a function of contract and can be written as
\[
\mathcal J^{\mathrm{uniq}}(k) := J(k; M(k)).
\]
This unique equilibrium response varies continuously with the contract. Together with compactness of $\mathcal K^{\mathrm{uniq}}$, we can prove the existence of an exact optimal contract rather than an $\varepsilon$-optimal one.

\begin{proposition} \label{prop: uniq equilibrium}
    Assume \Cref{assum_unique} holds uniformly over $\mathcal K^{\mathrm{uniq}}$, in particular, denote $C_q(k)$ as the constant $C_q$ in $\Cref{assum_unique}$ evaluated at contract $k$, then
    \[
    \sup_{k\in\mathcal K^{\mathrm{uniq}}}
\bigl(3x+E_{\max}C_q(k)\bigr)<1.
    \]
    Then, for any admissible contract $k \in \mathcal K^{\mathrm{uniq}}$, the lower-level mean-field game admits a unique equilibrium up to $\pi$-a.e.~equality. For each $k \in \mathcal K^{\mathrm{uniq}}$, denote $M(k) \in \mathcal M(k)$ as this unique equilibrium object, choosing any representative of $q(\cdot; k)$ within its $\pi$-a.e.~equivalence class.

    Then the insurer's payoff induced by the unique lower-level equilibrium,
    \[
    \mathcal J^{\mathrm{uniq}}(k) := J(k; M(k)),
    \]
    is well-defined and continuous on $\mathcal K^{\mathrm{uniq}}$. Consequently, there exists an optimal contract $k^* \in \mathcal K^{\mathrm{uniq}}$ such that
    \[
    \mathcal J^{\mathrm{uniq}}(k^*) = \max_{k \in \mathcal K^{\mathrm{uniq}}} \mathcal J^{\mathrm{uniq}}(k).
    \]
    Thus the pair $(k^*, M(k^*))$ constitutes an exact Stackelberg equilibrium under the unique lower-level equilibrium response.
\end{proposition}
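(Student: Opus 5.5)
The plan is to reduce the statement to Weierstrass's extreme value theorem on the compact set $\mathcal K^{\mathrm{uniq}}$. Existence and uniqueness of $M(k)$ for each $k$ follow directly from \Cref{thm_NEuniq}, since the contraction constant is at most $\rho:=\sup_{k}(3x+E_{\max}C_q(k))<1$. Well-definedness of $\mathcal J^{\mathrm{uniq}}$ then holds because $J(k;M)$ depends on $q$ only through an integral against $\pi$, so changing $q$ on a $\pi$-null set does not affect it. The substance of the argument is continuity of $k\mapsto \mathcal J^{\mathrm{uniq}}(k)$.

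For continuity I will make the contraction parametric. Write $\Phi_k$ for the map \eqref{eq: Phi} at contract $k$, with $e^{\mathrm{in}}_k(\theta;\tilde m)$, $e^{\mathrm{out}}(\theta;\tilde m)$ and $q_k(\theta;\tilde m)$ the corresponding objects. The first step is to show that $e^{\mathrm{in}}_k(\theta;\tilde m)$ is jointly continuous in $(k,\tilde m)$ for each $\theta$. I will get this from Berge's maximum theorem: ${\rm CE}^{\mathrm{in}}$ is jointly continuous in $(e,k,\tilde m)$, and by strict convexity in (U2) the minimizer over $[0,E_{\max}]$ is unique, so the argmin is a continuous single-valued map. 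The outside effort does not depend on $k$ at all. It follows that the minimal values ${\rm CE}^{\mathrm{in}}$ and ${\rm CE}^{\mathrm{out}}$, and hence $\Delta{\rm CE}_k(\theta;\tilde m)$, are jointly continuous.

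The second step is continuity of $q_k$ in $L^1(\pi)$. For fixed $(k,\tilde m)$, $q_k=\mathbf 1\{\Delta{\rm CE}_k<0\}$ up to the indifference set. Because $\alpha\ge\alpha_{\min}>0$, $\Delta{\rm CE}$ is strictly increasing in $\sigma$, so this set is $\pi$-null, exactly as in the uniqueness proof, using the density in (U5). Therefore, whenever $(k_n,\tilde m_n)\to(k,\tilde m)$, we have $q_{k_n}(\theta;\tilde m_n)\to q_k(\theta;\tilde m)$ for $\pi$-a.e. $\theta$, and dominated convergence (everything is bounded by $1$ and $E_{\max}$) gives joint continuity of $(k,\tilde m)\mapsto\Phi_k(\tilde m)$.

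The third step is the standard parametric Banach estimate:
\[
|\tilde m^*_{k_n}-\tilde m^*_k|\le|\Phi_{k_n}(\tilde m^*_{k_n})-\Phi_{k_n}(\tilde m^*_k)|+|\Phi_{k_n}(\tilde m^*_k)-\Phi_k(\tilde m^*_k)|\le\rho|\tilde m^*_{k_n}-\tilde m^*_k|+o(1).
\]
Hence $\tilde m^*_{k_n}\to\tilde m^*_k$.

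The final step is to apply the a.e. convergence argument once more, now to the integrand $(p-\alpha e^{\mathrm{in}}-\beta\mu(e^{\mathrm{in}}+x\tilde m))q$ along $(k_n,\tilde m^*_{k_n})$. This integrand is bounded because $\mu$ is continuous on a compact set and $p,\alpha$ are bounded. Dominated convergence then yields $\mathcal J^{\mathrm{uniq}}(k_n)\to\mathcal J^{\mathrm{uniq}}(k)$. Since $\mathcal K^{\mathrm{uniq}}$ is compact, Weierstrass gives a maximizer $k^*$, and $(k^*,M(k^*))$ is a Stackelberg equilibrium by construction.

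I expect the main obstacle to be the a.e. convergence of the discontinuous participation indicator. It works only because the limiting indifference set $\{\Delta{\rm CE}_k(\cdot;\tilde m)=0\}$ is $\pi$-null, and that relies on strict $\sigma$-monotonicity, which is exactly why $\alpha_{\min}>0$ is imposed. A secondary point is that the uniform bound $\rho<1$ is what allows the fixed point to move continuously with $k$; pointwise contraction for each $k$ would not be enough without this uniformity.
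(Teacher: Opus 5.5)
Your argument is correct, and it shares the paper's skeleton: well-definedness from \Cref{thm_NEuniq}, Berge for $e^{\mathrm{in}}$, the parametric contraction estimate for $\tilde m(k)$, dominated convergence, and Weierstrass. The genuine difference is in how you obtain $L^1(\pi)$-continuity of the participation rule.

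The paper passes through the truncated threshold $\hat\sigma^*(\gamma,c;\tilde m,k)$. It combines uniform continuity of $\Delta{\rm CE}$ in $(\sigma,k)$ with the uniform slope bound $\partial_\sigma\Delta{\rm CE}\ge\gamma_{\min}\alpha_{\min}^2\sigma_{\min}$ to get convergence of thresholds. It then uses the bounded density to turn this into an $L^1$ bound via the measure of the $\sigma$-slab between thresholds. The moving mean field is handled by a triangle inequality on thresholds, first in $k$ and then in $\tilde m$.

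You instead get $\pi$-a.e.\ convergence of $q$ directly. Joint continuity of $\Delta{\rm CE}$ in $(k,\tilde m)$ makes the strict sign persist along $(k_n,\tilde m_n)$ off the limiting indifference set, and that set is $\pi$-null. Your route is shorter and asks less of this step. You only need the limiting indifference set to be null, i.e.\ strict $\sigma$-monotonicity plus absolute continuity of $\pi$, not the uniform slope bound. Treating $(k_n,\tilde m_n)$ jointly also sidesteps the paper's term $|\hat\sigma^*(\gamma,c;\tilde m(k_n),k_n)-\hat\sigma^*(\gamma,c;\tilde m(k_n),k_0)|$. That term really requires continuity in $k$ uniformly in $\tilde m$, which the paper leaves implicit. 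In exchange, the paper's route gives a quantitative modulus for $q$ in $L^1(\pi)$, in the same spirit as the Lipschitz estimate \eqref{eq:Csigma}.

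One small overstatement in your closing remark: the uniform bound $\rho<1$ is convenient but not essential for continuity of $k\mapsto\tilde m^*_k$. Joint continuity of $(k,\tilde m)\mapsto\Phi_k(\tilde m)$ on the compact set $\mathcal K^{\mathrm{uniq}}\times[0,E_{\max}]$ already implies that every subsequential limit of $\tilde m^*_{k_n}$ is a fixed point of $\Phi_k$. Uniqueness of that fixed point then forces $\tilde m^*_{k_n}\to\tilde m^*_k$.
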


\begin{proof}[Sketch of the proof]
The proof proceeds in four steps. First, under $\Cref{assum_unique}$, the lower-level equilibrium induced by each admissible contract is unique up to $\pi$-a.e.~equality, so the payoff $\mathcal J^{\mathrm{uniq}}(k)=J(k;M(k))$ is well defined. Second, the unique equilibrium response depends continuously on $k$: the contraction characterization gives continuity of the equilibrium mean-field, Berge's Maximum Theorem gives continuity of the unique effort choices, and the threshold representation together with the bounded density of $\pi$ gives $L^1(\pi)$-continuity of $q$. Third, these continuity properties imply that $\mathcal J^{\mathrm{uniq}}$ is continuous on $\mathcal K^{\mathrm{uniq}}$. Finally, since $\mathcal K^{\mathrm{uniq}}$ is compact, Weierstrass yields an exact maximizer $k^*$, and the induced pair $(k^*,M(k^*))$ is an exact Stackelberg equilibrium. The full proof is deferred to \Cref{sec: proof of uniq equilibrium insurer}.
\end{proof}

\subsection{Numerics}

This subsection illustrates the insurer’s upper-level contract-design problem through two sets of numerical experiments. We first present a baseline upper-level optimization in which the uniqueness of the lower-level equilibrium is numerically checked for each candidate contract and the selected equilibrium is used to evaluate the insurer’s payoff. We then consider a parameter region in which some contracts induce multiple lower-level fixed points, and compare the worst- and best-equilibrium performance criteria introduced above.

\subsubsection{Baseline Upper-Level Optimization}

In the baseline experiment, each candidate contract is evaluated by solving the induced lower-level fixed-point equation and selecting the numerically detected equilibrium. In the reported parameter configurations, this procedure yields a single lower-level fixed point for each candidate contract. In the upper-level numerical optimization reported here, we identify a profit-maximizing contract from the finite contract grid for each parameter configuration. We therefore plot the resulting graphs of optimal premium $p^*$, effort incentive $\alpha^*$ and coverage level $\beta^*$ accordingly.

In the numerical experiments below, we use a calibration consistent with the fixed-point illustration in \Cref{sec:3.6}. Specifically, we set the interdependence parameter to $x=1.0$, the baseline effort cost to $c=2.0$, and the baseline pre-screening noise $\sigma=1.0$. The baseline population risk aversion is generated as $\gamma=\max{10^{-4},1.0+\varepsilon}$ with $\varepsilon\sim\mathcal N(0,0.5^2)$; in the comparative statics below, we vary the mean risk-aversion level when studying \Cref{fig:insurer_gamma}. For the upper-level optimization, we use a simulated population of $N=5000$ agents. For each candidate contract $(p,\alpha,\beta)$, we solve the induced lower-level equilibrium and evaluate the insurer’s expected profit at the selected equilibrium.

Given the numerically unique lower-level equilibrium selected for each contract, the insurer chooses the contract parameters $(p,\alpha,\beta)$ to maximize expected profit. To understand how optimal contract design responds to population characteristics, we examine in \Cref{fig:insurer_gamma} how the optimal premium $p^*$, effort incentive $\alpha^*$, and coverage level $\beta^*$ vary with the average risk aversion level $\bar{\gamma}$ of the agent distribution. In this experiment, risk aversion is generated as $\gamma=\max{10^{-4},\bar{\gamma}+\varepsilon}$ with $\varepsilon\sim\mathcal N(0,0.5^2)$, while $\bar{\gamma}$ is varied along the horizontal axis.

\begin{figure}[htbp]
\centering
\includegraphics[width=0.85\textwidth]{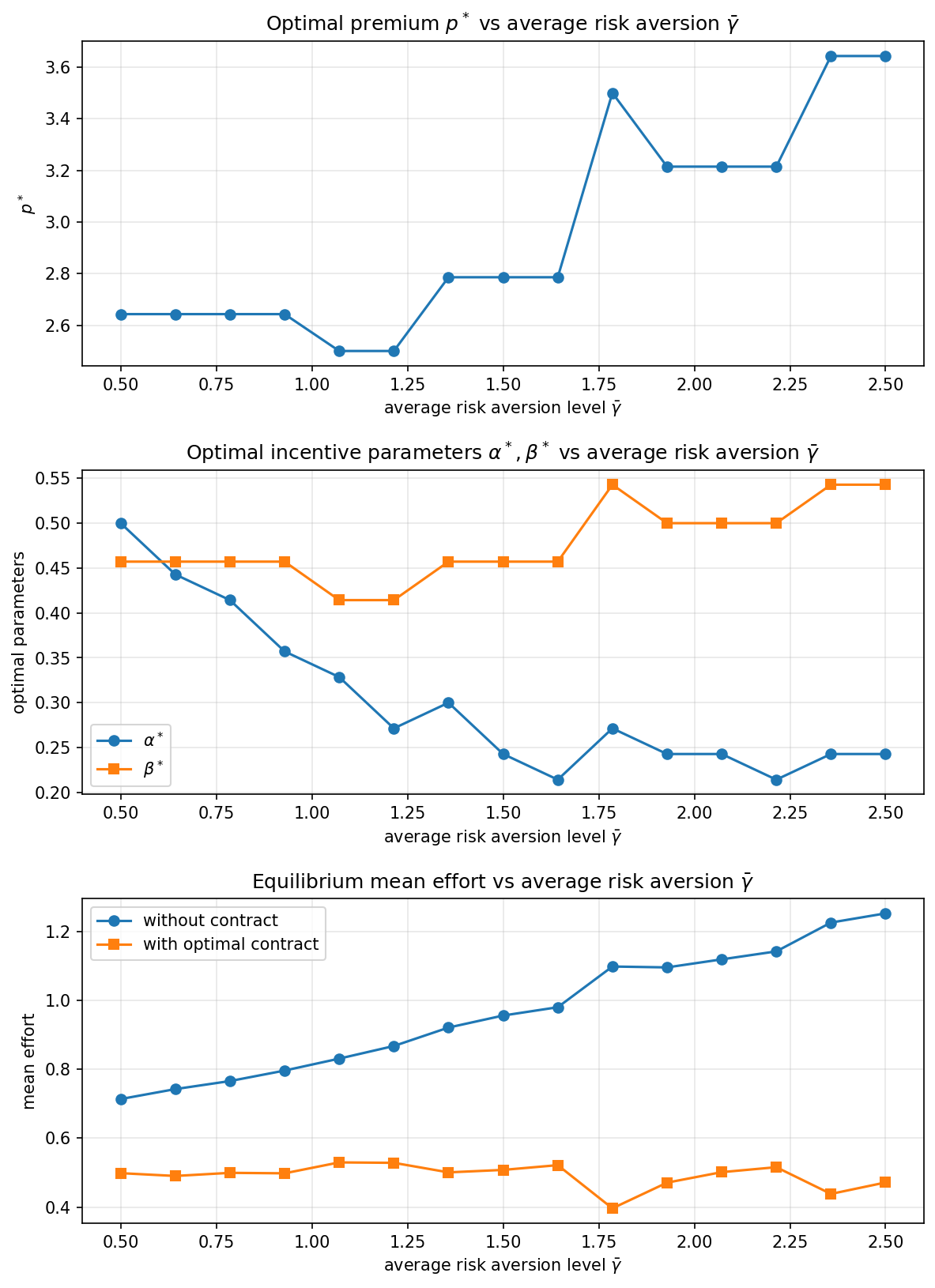}
\caption{Optimal contract and effort responses as functions of the average risk aversion level $\bar{\gamma}$.}
\label{fig:insurer_gamma}
\end{figure}

The top panel reports the optimal premium $p^*$.  As $\bar{\gamma}$ increases, the insurer is generally able to charge a higher premium. 
More risk-averse agents place a higher value on insurance, increasing their willingness to pay.  The upward shift in $p^*$ reflects the insurer’s ability to extract greater surplus when the population becomes more risk-averse.

The middle panel shows the optimal incentive parameters as $\bar{\gamma}$ varies. 
As average risk aversion increases, the insurer tends to increase the coverage level $\beta^*$, reflecting stronger demand for risk-sharing among more risk-averse agents. 
In contrast, the effort incentive $\alpha^*$ does not exhibit a monotone pattern and generally remains moderate, suggesting that as risk-sharing becomes more valuable, the insurer relies relatively more on insurance (coverage) than on effort-based incentives.

The bottom panel compares the equilibrium mean effort with and without the optimal contract. 
Without a contract, effort increases in $\bar{\gamma}$, as more risk-averse agents internalize risk more strongly. 
Under the optimal contract, however, effort remains significantly lower and relatively stable across $\bar{\gamma}$, indicating that insurance dampens precautionary effort incentives.

We next examine in \Cref{fig:insurer_sigma} how the insurer’s optimal outcomes respond to the level of pre-screening noise. 

\begin{figure}[htbp]
\centering
\includegraphics[width=0.85\textwidth]{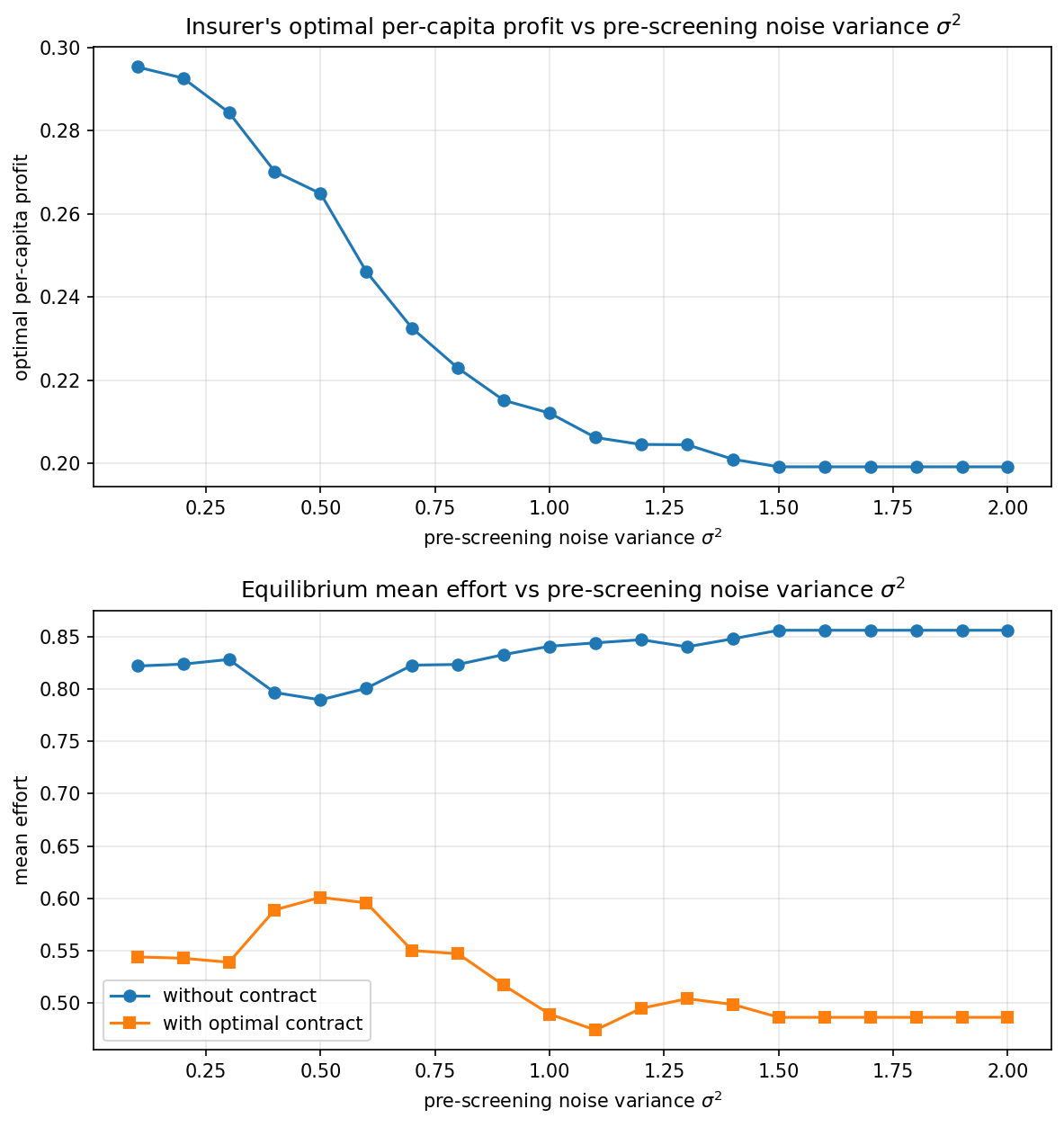}
\caption{Optimal profit and effort responses as functions of the pre-screening noise variance $\sigma^2$}
\label{fig:insurer_sigma}
\end{figure}

The top panel shows that the insurer’s optimal profit decreases monotonically in $\sigma^2$. Higher pre-screening noise weakens the insurer’s ability to design effective incentive and risk-sharing schemes, reducing surplus extraction. The bottom panel illustrates that as $\sigma^2$ increases, equilibrium effort under the optimal contract declines, while effort without a contract remains relatively stable.

\subsubsection{Numerical Examples of Worst- and Best-Equilibrium Criteria} \label{sec: numeric of worst and best}

We now provide a numerical illustration of the worst- and best-equilibrium performance criteria introduced in \Cref{sec: worst best}. The purpose of this experiment is to show how the two criteria behave in a region where the lower-level fixed-point equation admits multiple equilibria for some contracts. This also clarifies that the distinction between worst- and best-equilibrium evaluation is a contract-level distinction: for a fixed contract $k=(p,\alpha,\beta)$, different lower-level equilibria can imply different insurer payoffs.

For the numerical illustration, we consider a population calibration with $N=5000$, $c_i=2.0$, $x=2.0$, and $\sigma_i=0.75$. Risk aversion is generated by $\gamma_i=\max{10^{-4},1.2+\varepsilon_i}$, where the $\varepsilon_i$ are drawn once from $\mathcal N(0,0.5^2)$. We fix the coverage parameter at $\beta=0.75$, then examine the contract grid $p\in[4.8,4.88]$ and $\alpha\in[0.2,0.5]$. Economically, varying $p$ changes the price of participation, while varying $\alpha$ changes the effort-incentive intensity; we keep $\beta$ fixed to isolate these two channels, since varying coverage would jointly affect risk sharing and moral-hazard incentives.

The multiplicity pattern is shown in \Cref{fig:worst_best_multiplicity_map}. Each point in the $(p,\alpha)$ grid corresponds to a fixed contract, and the color reports the number of lower-level fixed points detected for that contract. A large portion of the displayed contract region admits more than one lower-level equilibrium, which makes the region suitable for illustrating the difference between worst- and best-equilibrium evaluation.
\begin{figure}[htbp]
    \centering
    \includegraphics[width=0.72\textwidth]{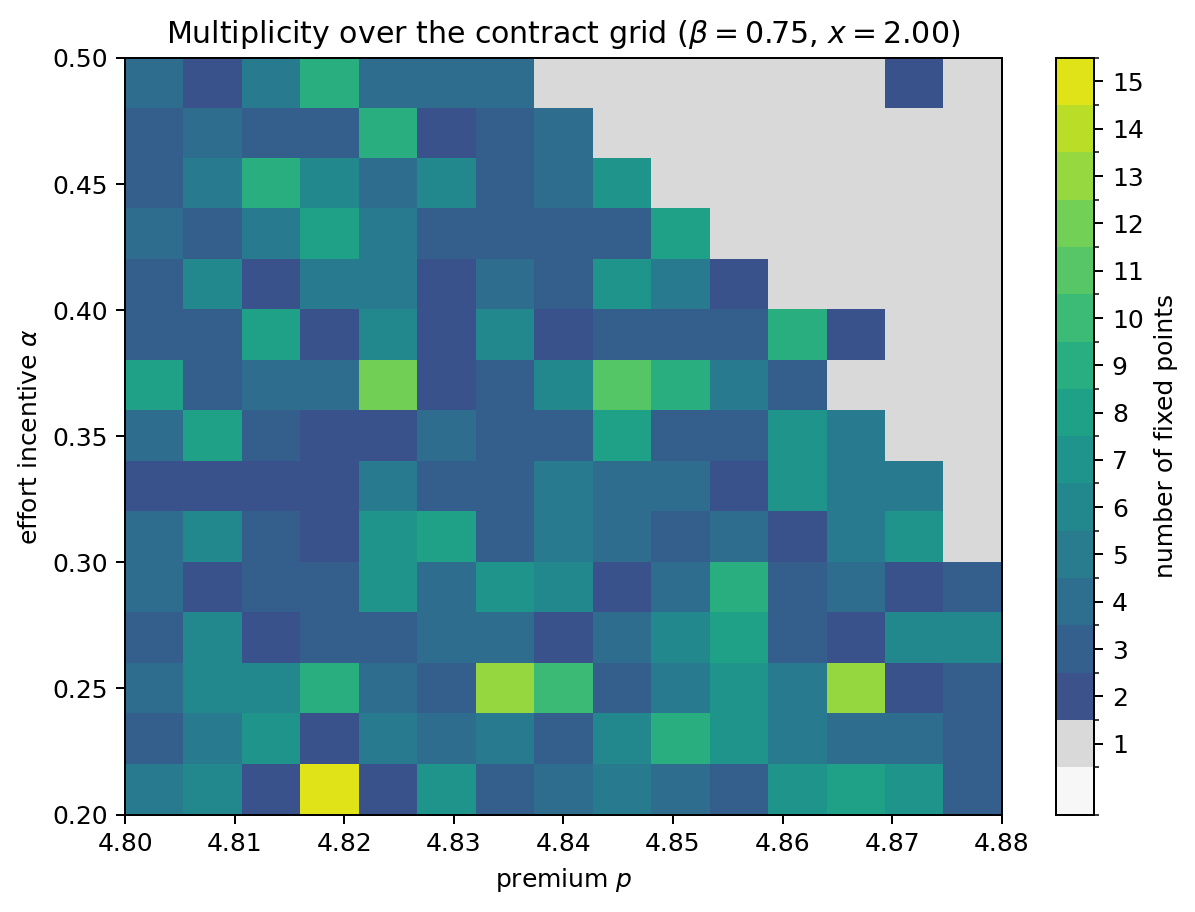}
    \caption{Multiplicity of lower-level fixed points over the contract grid.}
    \label{fig:worst_best_multiplicity_map}
\end{figure}
At the upper-right corner of \Cref{fig:worst_best_multiplicity_map}, the detected multiplicity should be interpreted with some numerical caution. The two fixed points there are very close to each other; since the algorithm counts numerical clusters of roots of $\Phi(\tilde m) - \tilde m$, near-flat residuals and threshold-induced small jumps in $\Phi$ may lead the root scan to report distinct detected clusters even when the associated equilibrium outcomes are nearly indistinguishable.

We then examine two one-dimensional slices through this region. \Cref{fig:worst_best_vary_p} fixes $\alpha=0.2$, $\beta=0.75$, and varies the premium $p$. Along this slice, both the worst- and best-equilibrium payoff criteria are evaluated for the same fixed contract at each value of $p$. The payoff gap remains substantial over the multiplicity region, showing that equilibrium selection can materially affect the insurer’s value from a given contract.
\begin{figure}[htbp]
    \centering
    \includegraphics[width=0.95\textwidth]{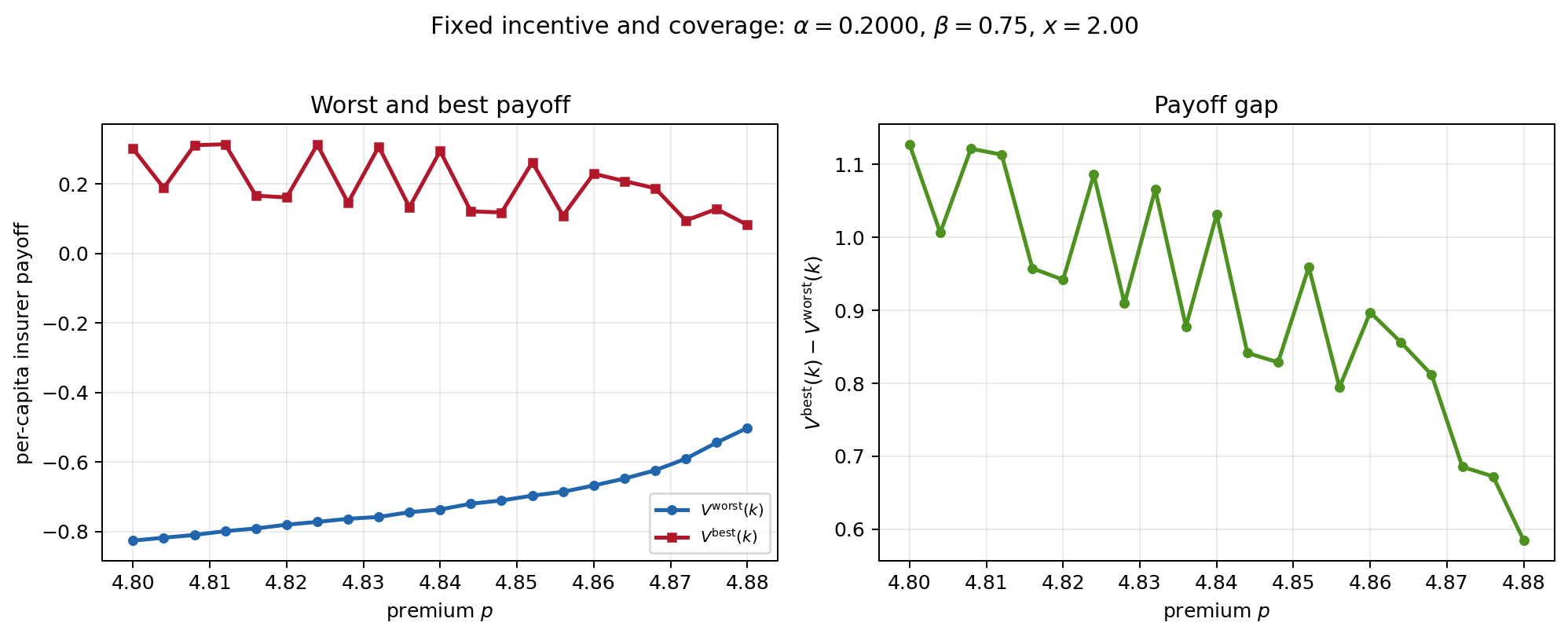}
    \caption{Worst- and best-equilibrium insurer payoffs when the premium $p$ varies}
    \label{fig:worst_best_vary_p}
\end{figure}

\Cref{fig:worst_best_vary_alpha} fixes $p=4.8$, $\beta=0.75$, and varies the effort-incentive parameter $\alpha$. As in the premium slice, the gap between the worst- and best-equilibrium payoff criteria remains substantial, showing that equilibrium selection can also matter along the incentive dimension.
\begin{figure}[htbp]
    \centering
    \includegraphics[width=0.95\textwidth]{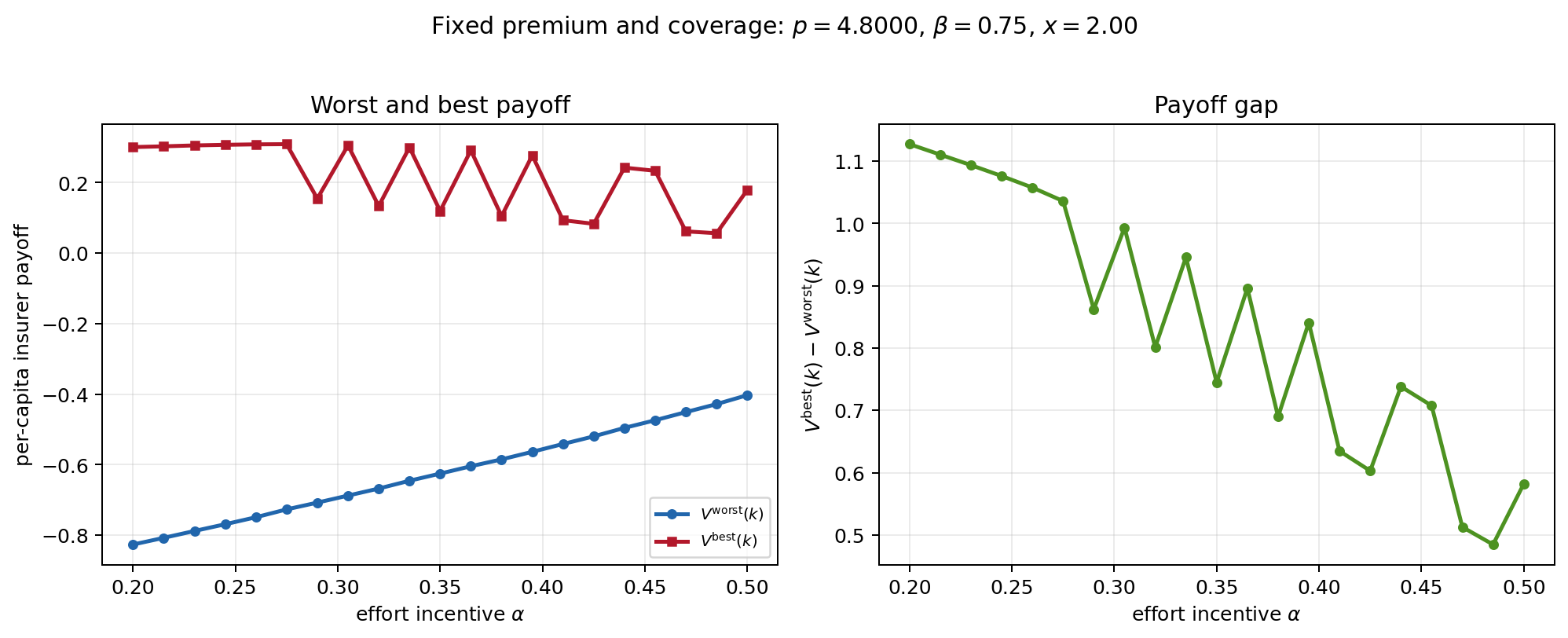}
    \caption{Worst- and best-equilibrium insurer payoffs when the effort-incentive parameter $\alpha$ varies.}
    \label{fig:worst_best_vary_alpha}
\end{figure}

One noticeable feature is that, in both graphs, the worst-equilibrium payoff varies relatively smoothly, while the best-equilibrium payoff is less smooth. This suggests that the worst-equilibrium criterion may be following a relatively stable equilibrium branch, whereas the best-equilibrium criterion can switch across different payoff-maximizing equilibrium branches as the contract parameters change. Thus, the irregularity of the best-payoff curve reflects the fact that best-equilibrium evaluation is an upper envelope over multiple lower-level equilibria.

Another feature is that the worst-equilibrium payoff is negative for most contracts in this experiment. This occurs because the displayed contracts are not restricted to those that guarantee the insurer a nonnegative payoff under every induced equilibrium. For some contracts, the participation pattern generated by an unfavorable lower-level equilibrium can make expected reimbursement and incentive payments exceed premium revenue. This further highlights why the choice between worst- and best-equilibrium performance criteria matters for the insurer's evaluation of contracts.

\section{Contract Menu} \label{sec: menu}

In this section, we extend the model from one insurance contract to a {\it menu of contracts}. The main idea is that, once agents differ in their characteristics, for example in their degree of risk-aversion, the insurer may improve performance by offering multiple contracts rather than a single policy. A menu allows agents to self-select across contracts with different prices and coverage levels, thereby generating endogenous screening. The menu-driven screening mechanism analyzed here represents a practical structural innovation for insurance operations. By endogenously aligning coverage tiers with unobservable risk-aversion profiles, the optimized menu mitigates the adverse selection inherent in interdependent networks, providing insurers with a scalable, data-driven blueprint for product design.

We begin in \Cref{sec: two contracts} with the two-contract case, which is the simplest environment in which screening can arise and be analyzed numerically. \Cref{sec: NE in two} characterizes the lower-level mean-field equilibrium among agents, \Cref{sec: NE numerics in two} presents the corresponding lower-level numerical illustrations, and \Cref{sec: Insurer in two} studies the corresponding upper-level insurer problem together with numerical illustrations in \Cref{sec: Insurer numerics in two}. We then extend the analysis to a general multi-contract menu in \Cref{sec: multi contracts}. \Cref{sec: NE in multi} and \Cref{sec: Insurer in multi} formulate the corresponding agents' equilibrium and the insurer's problem, respectively, and \Cref{sec: Insurer numerics in multi} presents numerical comparisons across menus of different sizes.

As in the previous sections, each agent has type $\theta=(\gamma,c,\sigma)$ drawn from a distribution $\pi$ on $\Theta$. Throughout this section, we simplify the model by fixing $c\equiv \bar c$ and $\sigma\equiv \bar\sigma$ as constants shared by all agents. Hence, heterogeneity comes only from $\gamma$, and $\pi$ reduces to a distribution over $\gamma$.

\subsection{Two Contracts} \label{sec: two contracts}

The insurer offers two contracts
$C_A=(p_A,\alpha_A,\beta_A),\;
C_B=(p_B,\alpha_B,\beta_B)$. Each agent chooses among the outside option and the two contracts through a mixed choice vector
$q(\gamma;\tilde m)=(q^{\mathrm{out}}(\gamma;\tilde m),q^A(\gamma;\tilde m),q^B(\gamma;\tilde m))\in [0,1]^3$, where $q^s(\gamma;\tilde m)$ is the probability that type $\gamma$ selects option $s$ given the mean-field level $\tilde m$, satisfying $q^{\mathrm{out}}+q^A+q^B=1$. For screening purposes, it is natural to impose an ordering so that one contract provides stronger risk sharing but at a higher price.
\begin{itemize}
    \item $p_B \ge p_A$ (contract B is more expensive),
	\item $\beta_B \ge \beta_A$ (contract B offers weakly higher coverage),
    \item $\alpha_B \ge \alpha_A$ (contract B offers stronger effort-based incentives)
\end{itemize}
Contract B mainly reduces residual risk more strongly, making it relatively more attractive to agents with higher $\gamma$.

In what follows, we first characterize the agents' lower-level mean-field equilibrium under the menu, then formulate the corresponding upper-level insurer problem, and finally present numerical illustrations of the induced sorting patterns and insurer performance.

\subsubsection{The Agents' Nash Equilibrium} \label{sec: NE in two}
If the agent does not purchase insurance, the certainty equivalent is
\[
{\rm CE}^{\mathrm{out}}(e;\gamma, \tilde m) = \mu(e + x\tilde{m}) + \bar{c} \, e + \frac{1}{2} \, \gamma \, \lambda(e + x\tilde{m}).
\]
Under contract A, the certainty equivalent is
\[
{\rm CE}^{A}(e;\gamma, \tilde m) = p_A + (\bar c - \alpha_A)\, e + \frac{1}{2}\,\gamma\,\alpha_A^2 \bar\sigma^2 + (1-\beta_A)\,\mu(e + x\tilde{m}) + \frac{1}{2}\,\gamma\,(1-\beta_A)^2 \lambda(e + x\tilde{m}).
\]
Similarly, under contract B, the certainty equivalent is
\[
{\rm CE}^{B}(e;\gamma, \tilde m) = p_B + (\bar c - \alpha_B)\, e + \frac{1}{2}\,\gamma\,\alpha_B^2 \bar\sigma^2 + (1-\beta_B)\,\mu(e + x\tilde{m}) + \frac{1}{2}\,\gamma\,(1-\beta_B)^2 \lambda(e + x\tilde{m}).
\]
Under each option (outside the contract, contract A, contract B), the agent chooses effort to minimize the corresponding certainty equivalent, formally, 
\[
e^{s}(\gamma; \tilde m) \in \argmin_{e \ge 0} {\rm CE}^{s}(e;\gamma, \tilde m).
\]
The menu choice is then represented by a mixed choice vector $q(\gamma;\tilde m) = \big(q^{\mathrm{out}}(\gamma;\tilde m),q^A(\gamma;\tilde m),q^B(\gamma;\tilde m)\big)$ over the three options ${\mathrm{out},A,B}$. This mixed formulation is the direct analogue of the participation probability in the one-contract model: probability is assigned only to options that attain the lowest minimized certainty equivalent. The resulting mean-field equilibrium requires the aggregate expected effort induced by these mixed choices to be consistent with the conjectured mean-field level $\tilde m$.

\begin{definition}[Mean-Field Equilibrium with Contract Menu]
    Given the contracts $(C_A, C_B)$ and the population distribution $\pi$ over $\gamma$, a tuple
    \[
    \Big(\tilde m, \{e^{\mathrm{out}}(\cdot; \tilde{m}), e^{A}(\cdot; \tilde{m}), e^{B}(\cdot; \tilde{m})\}, \{q^{\mathrm{out}}(\cdot; \tilde{m}), q^{A}(\cdot; \tilde{m}), q^{B}(\cdot; \tilde{m})\}\Big)
    \]
    forms a mean-field Nash equilibrium if the following conditions hold:
    \begin{enumerate}
        \item[(1)] For each risk-aversion level $\gamma$ and given mean-field level $\tilde{m}$, the effort choices satisfy
        \[
        e^{s}(\gamma; \tilde m) \in \argmin_{e \ge 0} {\rm CE}^{s}(e;\gamma, \tilde m), \qquad s \in \{\mathrm{out}, A, B\}.
        \]
        
        \item[(2)] The mixed menu choice vector
        \[
        q(\gamma;\tilde m)
=
\big(q^{\mathrm{out}}(\gamma;\tilde m),q^A(\gamma;\tilde m),q^B(\gamma;\tilde m)\big)
        \]
        is measurable in $\gamma$ and satisfies
        \[
q^{\mathrm{out}}(\gamma;\tilde m)+q^A(\gamma;\tilde m)+q^B(\gamma;\tilde m)=1,
\qquad
q^s(\gamma;\tilde m)\ge 0, \qquad s\in\{\mathrm{out},A,B\}.
\]  
Moreover, positive probability is assigned only to options that minimize the certainty equivalent; equivalently, 
\[
q^s(\gamma;\tilde m) = 0, \qquad \forall s\notin\argmin_{r\in\{A,B,\mathrm{out}\}} \; {\rm CE}^{r}\big(e^{r}(\gamma; \tilde m);\gamma, \tilde m\big).
\]
If one option is the unique minimizer, then the mixed choice vector assigns probability $1$ to that option; if several options attain the same minimized certainty equivalent, then the agent may randomize among these tied options.

        \item[(3)] The aggregate expected effort induced by $\tilde m$ is
\[
\bar e(\tilde{m}) = \int
\Big[
q^{\mathrm{out}}(\gamma;\tilde m)e^{\mathrm{out}}(\gamma;\tilde m)
+
q^A(\gamma;\tilde m)e^{A}(\gamma;\tilde m)
+
q^B(\gamma;\tilde m)e^{B}(\gamma;\tilde m)
\Big]\,d\pi(\gamma).
\]
        The equilibrium mean effort $\tilde{m}$ satisfies the fixed point condition
        \[
        \tilde m = \bar e(\tilde m).
        \]
    \end{enumerate}
\end{definition}

We do not provide a separate theorem for the contract menu model, since the argument is essentially a finite-action extension of the one-contract case. Under a mild extension of \Cref{assum_NE}, the binary participation rule is replaced by a mixed choice vector over the finite option set ${\mathrm{out},A,B}$. The corresponding expected-effort correspondence is then obtained by taking convex combinations of the optimal efforts associated with the options. With this modification, the existence of a mean-field Nash equilibrium follows from the same measurable-selection and Kakutani fixed-point argument used in the one-contract model. For this reason, we focus here on the equilibrium formulation and the associated numerical analysis, and omit a separate theorem and proof.

\subsubsection{The Agents' Nash Equilibrium - Numerics}  \label{sec: NE numerics in two}

In the two-contract numerical experiments, we use a simulated population of $N=5000$ agents. We draw the risk-aversion parameter from a truncated normal distribution. Specifically, we draw
$\gamma \sim \mathcal{N}(1.5,\, 0.9^2)$ truncated to the interval $[\gamma_{\min}, \gamma_{\max}] = [10^{-4},\, 5]$. 
$c \equiv 2.2$ and $\sigma \equiv 2.0$ are fixed among agents, and the interdependence parameter is set to $x=1.0$. For the lower-level menu illustrations in Figures~\ref{fig:choice_shares} and \ref{fig:distribution_with_choices}, we use the two contracts $C_A=(3.5,0.02,0.52)$ and $C_B=(6.6,0.05,0.87)$.

As in the previous numerical sections, the two-contract equilibria reported below are numerically verified to be unique. The reported choice share of each option $s\in{\mathrm{out},A,B}$ is computed as
\[
\int q^s(\gamma;\tilde m^*)\,d\pi(\gamma),
\]
where $\tilde m^*$ is the equilibrium mean-field level. In the numerical implementation, the indifference sets are negligible, so the mixed choice vector is effectively pure for almost every $\gamma$.

\Cref{fig:choice_shares} illustrates the lower-level mean-field Nash equilibrium choice shares under a two-contract menu. The insurer offers a menu of two contracts $C_A$ and $C_B$, satisfying $p_B > p_A, \alpha_B > \alpha_A$, and $\beta_B > \beta_A$, so that contract B provides stronger risk sharing at a higher price. Under this menu, the equilibrium mixed choice vector is effectively pure almost everywhere: lower-$\gamma$ agents are assigned probability one to the outside option, intermediate agents are assigned probability one to contract $A$, and sufficiently high-$\gamma$ agents are assigned probability one to contract $B$. This illustrates that a properly designed menu can generate partial screening, with higher–$\gamma$ agents more likely to select the stronger contract.
\begin{figure}[htbp]
\centering
\includegraphics[width=0.8\textwidth]{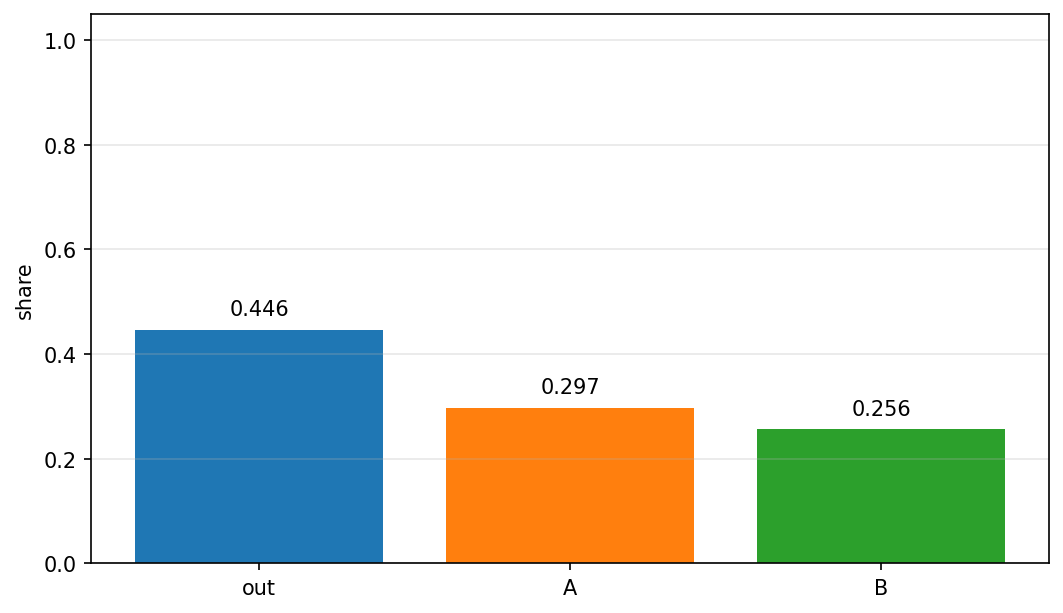}
\caption{Equilibrium choice shares under the contract menu.}
\label{fig:choice_shares}
\end{figure}

To further visualize the screening effect of the contract menu in the presence of heterogeneity, \Cref{fig:distribution_with_choices} overlays the equilibrium option assigned by the mixed choice vector on the population distribution of $\gamma$. The black curve represents the truncated normal density $\pi(\gamma)$, while the shaded regions indicate the option receiving probability $1$ for each value of $\gamma$. The two vertical thresholds $\gamma_1$ and $\gamma_2$ separate the regions corresponding to opting out, choosing contract $A$, and choosing contract $B$. Since in the present setup $c$ and $\sigma$ are fixed across agents, heterogeneity arises only from $\gamma$, so the equilibrium choice is determined solely by $\gamma$ and the thresholds $\gamma_1$ and $\gamma_2$ can be computed explicitly.
\begin{figure}[ht]
\centering
\includegraphics[width=0.8\textwidth]{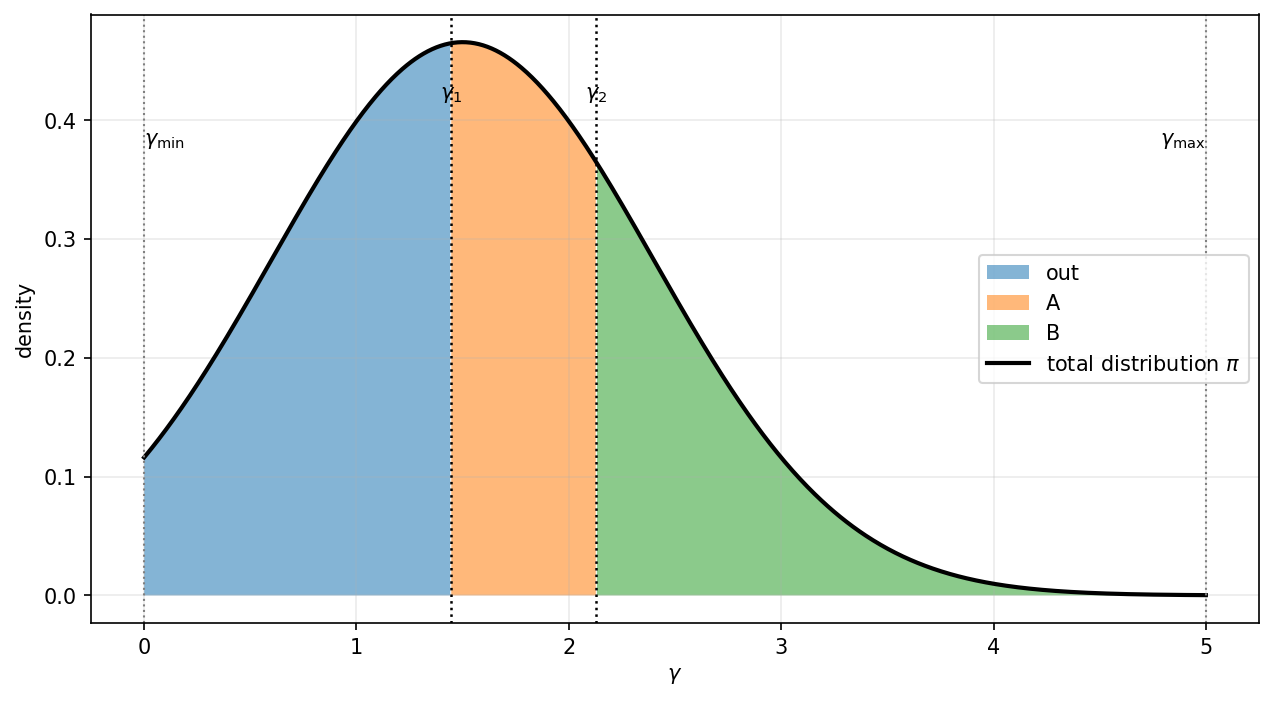}
\caption{Population distribution of $\gamma$ with the equilibrium contract choices under the menu.}
\label{fig:distribution_with_choices}
\end{figure}

\begin{remark}

\begin{enumerate}
\item[1.] In the current setup, we fix $c$ and $\sigma$ and allow heterogeneity only in $\gamma$. More general specifications are possible, for example, one may change the underlying $\gamma$-distributions or introduce heterogeneity in $c$ and $\sigma$. In such cases, the equilibrium mixed choice vector may generate more complex sorting patterns: some low-$\gamma$ agents may be assigned to contract $B$, while some high-$\gamma$ agents may be assigned to the outside option.

\item[2.] The equilibrium choice shares are highly sensitive to the parameters of $C_A$ and $C_B$. Small changes in prices or coverage levels can lead to very different outcomes: in some cases all agents participate; in others, total participation is below one but all participating agents are assigned to contract $B$, indicating that contract $B$ strictly dominates $A$ under that parameter configuration. Consequently, in the insurer’s problem, the joint design of $(C_A, C_B)$ is nontrivial. Beyond increasing overall participation, the insurer may also need to control how participating agents are allocated across contracts $A$ and $B$.
\end{enumerate}
\end{remark}

\subsubsection{The Insurer's Problem} \label{sec: Insurer in two}

For a two-contract menu $(C_A,C_B)$ with $C_s=(p_s,\alpha_s,\beta_s)$, $s\in{A,B}$, fix a lower-level mean-field effort level $\tilde m$. For each contract $s\in{A,B}$, the insurer's random payoff from an agent with risk-aversion $\gamma$ under contract $s$ is
\[
\Pi^s(\gamma;\tilde m)
=
p_s-\alpha_s S\!\big(e^{s}(\gamma;\tilde m)\big)
-\beta_s L\!\big(e^{s}(\gamma;\tilde m);\tilde m\big),
\qquad s\in\{A,B\}.
\]
Taking expectations, the insurer's expected payoff is
\[
V^s(C_s;\gamma,\tilde m)
=
p_s-\alpha_s e^{s}(\gamma;\tilde m)
-\beta_s\mu\!\big(e^{s}(\gamma;\tilde m)+x\tilde m\big),
\qquad s\in\{A,B\}.
\]
If the agent opts out, the insurer's payoff is zero.

For a given menu $(C_A,C_B)$, let
\begin{align*}
\mathcal M(C_A,C_B)
:=
\Big\{
&\big(
\tilde m,
\{e^{\mathrm{out}}(\cdot;\tilde m),e^{A}(\cdot;\tilde m),e^{B}(\cdot;\tilde m)\},
\{q^{\mathrm{out}}(\cdot; \tilde{m}), q^{A}(\cdot; \tilde{m}), q^{B}(\cdot; \tilde{m})\}
\big)
\\
&\text{ satisfying the equilibrium conditions}
\Big\}
\end{align*}
denote the set of mean-field equilibria induced by the menu.

Given a lower-level mean-field equilibrium $M\in\mathcal M(C_A,C_B)$, the insurer's per-capita
expected profit is
\begin{align*}
J(C_A,C_B;M)
= \int
\Big[
V^{A}(C_A;\gamma,\tilde m) q^A(\gamma;\tilde m)
+
V^{B}(C_B;\gamma,\tilde m) q^B(\gamma;\tilde m)
\Big]\,d\pi(\gamma),
\end{align*}
Here $q^A(\gamma;\tilde m)$ and $q^B(\gamma;\tilde m)$ are the equilibrium probabilities assigned by a type-$\gamma$ agent to contracts $A$ and $B$, respectively.

The insurer's menu-design problem is therefore
\[
\sup_{C_A,C_B} J^{\mathrm{menu}}(C_A,C_B),
\qquad
J^{\mathrm{menu}}(C_A,C_B)
:=
\sup_{M\in\mathcal M(C_A,C_B)} J(C_A,C_B;M).
\]

This formulation also provides the comparison with the one-contract benchmark.
Indeed, the one-contract problem is nested in the menu problem by imposing
$C_A=C_B=C$. In that case, the two contract labels are payoff-equivalent, and the insurer's payoff depends only on the total probability $q^A(\gamma;\tilde m)+q^B(\gamma;\tilde m)$ assigned to the identical contract. Therefore,
\[
\sup_{C_A,C_B} J^{\mathrm{menu}}(C_A,C_B)
\ge
\sup_C J^{\mathrm{one}}(C).
\]
Hence, after optimization, two contracts can never perform worse than one
contract. If the optimal menu satisfies $C_A^*=C_B^*$, then the model
endogenously collapses to the one-contract benchmark and the second contract has
no additional value. By contrast, if the optimizer yields $C_A^*\neq C_B^*$ and
a strictly larger value, then menu-based screening creates additional insurer
value.

\subsubsection{Numerical Optimization of the Two-Contract Menu} \label{sec: Insurer numerics in two}

We conclude with a very simple numerical example that compares the insurer's performance under a single-contract offer and under a two-contract menu. In particular, we compare contract $A$ only with contract $(A,B)$, and contract $B$ only with contract $(A,B)$. 
The simple comparison experiment uses the same population calibration as above, with $N=5000$, $\gamma\sim\mathcal N(1.5,0.9^2)$ truncated to $10^{-4},5$, $c\equiv 2.2$, $\sigma\equiv 2.0$, and $x=1.0$. The contracts are $C_A=(3.5,0.02,0.52)$ and $C_B=(6.6,0.05,0.87)$.

As in the previous numerical exercise, the indifference sets are numerically negligible, and the fixed-point equation is numerically checked to have a unique solution in all reported two-contract experiments. Hence the mixed choice vector is effectively pure for almost every $\gamma$, and the reported shares can be interpreted as equilibrium choice shares.

Figure~\ref{fig:A-vs-menu} and Figure~\ref{fig:B-vs-menu} reports the lower-level Nash equilibrium outcomes. In each figure, the left panel shows the equilibrium choice shares, where the participation share is computed as the total probability assigned to contracts $A$ and $B$, and the right panel shows the comparison between the insurer's per-capita profits. Relative to offering only contract $A$ or contract $B$, the two-contract menu splits insured agents across two contracts and yields a higher per-capita profit.

This is only a stylized example, not an optimized exercise. It's simply used to illustrate that, even in a basic calibration, allowing the insurer to offer two contracts can improve performance relative to offering only one contract. In this sense, the figure provides preliminary numerical support for the idea that menu-based screening can create additional insurer value.

\begin{figure}[ht]
    \centering
    \includegraphics[width=0.8\textwidth]{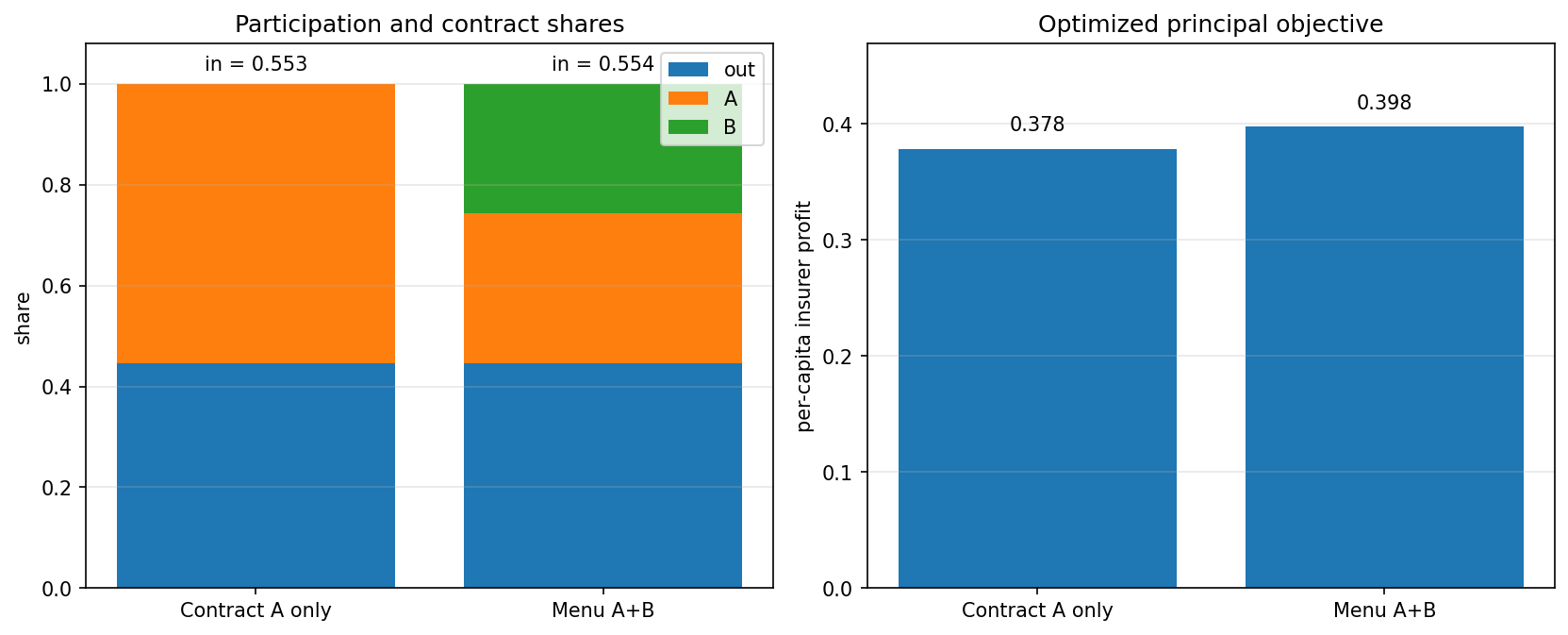}
    \caption{A simple comparison between offering only contract $A$ and offering the menu $(A,B)$.}
    \label{fig:A-vs-menu}
\end{figure}

\begin{figure}[ht]
    \centering
    \includegraphics[width=0.8\textwidth]{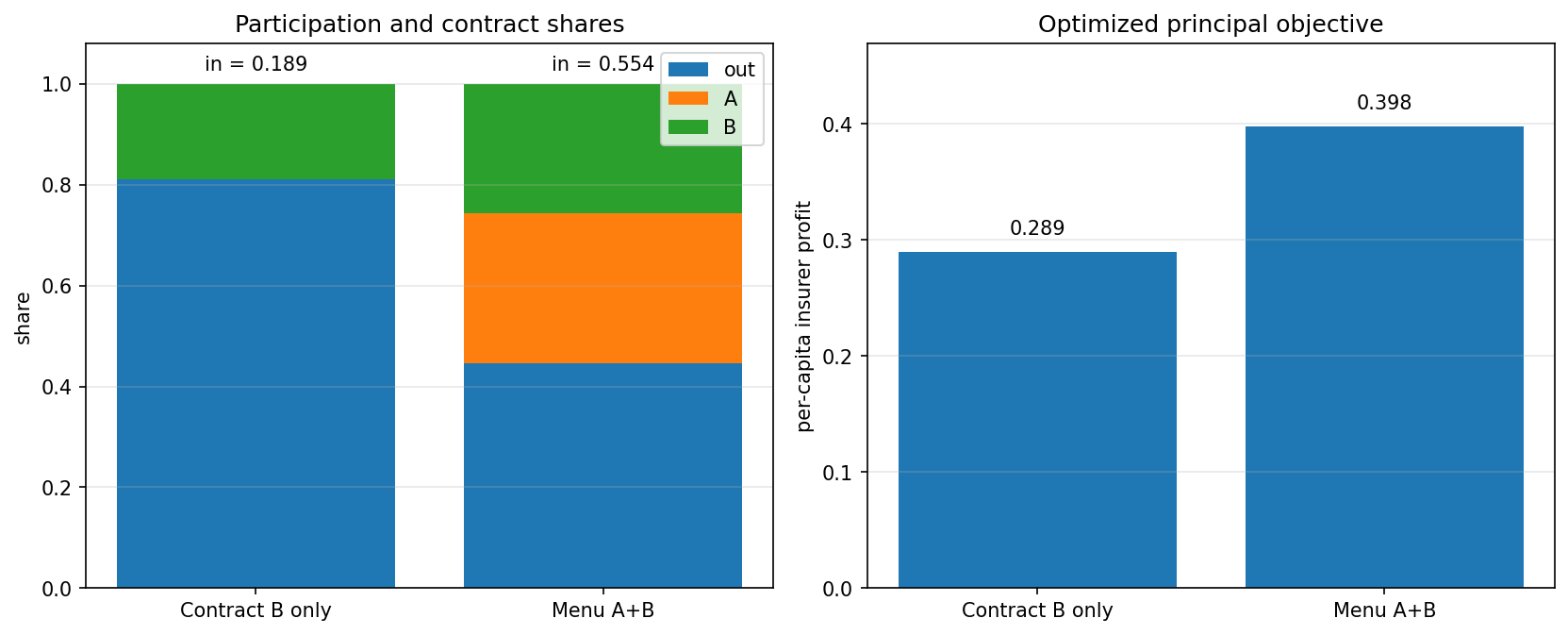}
    \caption{A simple comparison between offering only contract $B$ and offering the menu $(A,B)$.}
    \label{fig:B-vs-menu}
\end{figure}

Furthermore, we solve the insurer's problem numerically and compare the optimal one-contract benchmark with the optimal two-contract menu. 

Figure~\ref{fig:optimized-problem-comparison} reports the outcome of this optimization exercise. For the optimized one-contract problem, the best contract is
\[
C^{*,\mathrm{one}}=(p,\alpha,\beta)=(2.9,\,0.06,\,0.45),
\]
which yields an insurer per-capita profit of approximately $0.4026$ and a lower-level equilibrium participation share of approximately $0.6476$. For the optimized two-contract problem, the best menu is
\[
C_A^*=(2.2,\,0.04,\,0.35), \qquad
C_B^*=(7.1,\,0.04,\,0.94).
\]
Under this optimized menu, the insurer's per-capita profit rises to approximately $0.4635$. The total participation share is approximately $0.6374$, with equilibrium choice shares of about $0.3980$ for contract $A$ and about $0.2394$ for contract $B$.

Several features are worth emphasizing. First, the optimized one-contract benchmark generates a higher total participation share than the optimized two-contract menu: participation is about $64.76\%$ under the single contract, compared with about $63.74\%$ under the two-contract menu. However, despite this lower participation share, the optimized two-contract menu delivers a substantially higher per-capita profit. In other words, the menu improves insurer value not by maximizing total participation, but by reallocating equilibrium choice probabilities across contracts in a more profitable way. Second, the optimized menu has a natural screening interpretation: contract $A$ is slightly cheaper and offers weaker coverage, while contract $B$ is substantially more expensive and provides much stronger coverage. Thus, the numerical optimizer selects a menu with a clear low-tier/high-tier structure.

A related observation is that the optimized one-contract benchmark still achieves a slightly higher participation share than the optimized two-contract menu, even though contract $A$ in the menu is cheaper. One possible explanation is that contract $A$ is not only cheaper, but also provides weaker coverage: the optimized one-contract benchmark is $(2.9,0.06,0.45)$, whereas the low-tier menu contract is $(2.2,0.04,0.35)$. Thus, for some types, the lower premium may not fully compensate for the lower insurance value. As a useful complement, the agents' equilibrium effort levels are
\[
\bar e^{\,\mathrm{one}} = 0.5755, \qquad
\bar e^{\,\mathrm{one}}_{\mathrm{insured}} = 0.4255, \qquad
\bar e^{\,\mathrm{one}}_{\mathrm{out}} = 0.8510,
\]
under the optimized one-contract benchmark, and
\[
\bar e^{\,\mathrm{menu}} = 0.5517, \qquad
\bar e^{\,A} = 0.5849, \qquad
\bar e^{\,B} = 0, \qquad
\bar e^{\,\mathrm{out}} = 0.8796,
\]
under the optimized two-contract menu. These statistics suggest that the menu changes not only contract choice, but also the composition of effort across types.

The outperformance of the optimized menu stems directly from its ability to align incentives across heterogeneous agents. Rather than offering a blunt, one-size-fits-all policy, the menu leverages differential coverage to safely screen highly risk-averse agents while maintaining strict security-investment incentives for lower-cost agents. This mitigates the moral hazard that would otherwise be amplified by the network's domino effect.

Finally, we note that the present optimization is still only approximate. Numerically, we use a relatively coarse grid search around the relevant parameter region. The one-contract problem and two-contract menu are built from the same underlying grid structure, but we restrict attention to different parameter regions in order to focus the search on the most relevant candidates and to reduce computation. In the final search reported here, the one-contract problem is solved over $10\times 5\times 9=450$ candidate contracts, while the two-contract problem is solved over ordered pairs of contracts, with $432$ candidates for contract $A$ and $252$ candidates for contract $B$, for a total of up to $108{,}864$ menu candidates. Even with this grid, the computation time is already noticeable, so the optimizer should still be interpreted as an approximate numerical solution. A substantially finer search would likely improve accuracy, but would also require much longer running time.

\begin{figure}[ht]
    \centering
    \includegraphics[width=0.9\textwidth]{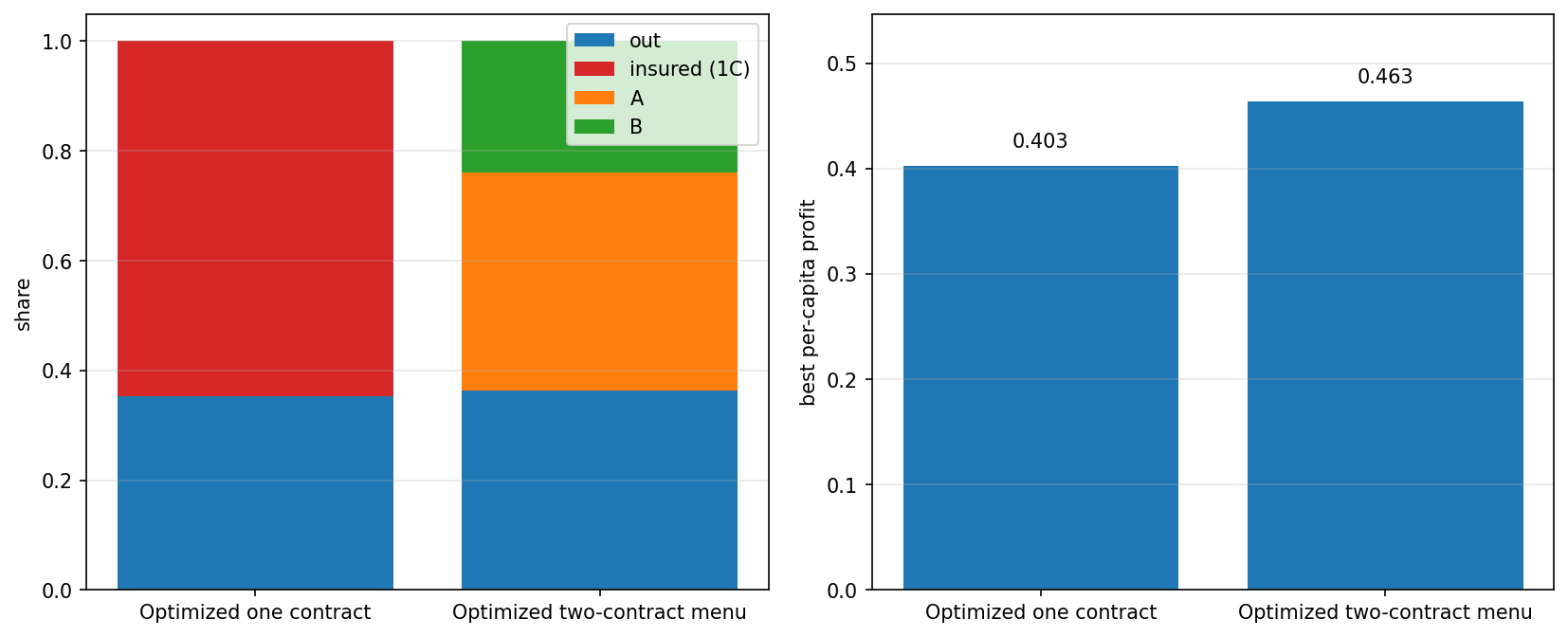}
    \caption{Optimized one-contract problem versus optimized two-contract problem.}
    \label{fig:optimized-problem-comparison}
\end{figure}

\subsection{Multi Contracts} \label{sec: multi contracts}

Before moving to a full multi-contract model, it is useful to clarify the main modeling choices. The basic idea is that we keep a single underlying type distribution for the population, but instead of offering only one contract or a two-contract menu, the insurer offers a larger menu
\[
\mathscr{C} = \{C_1,\dots,C_K\}, \qquad C_s=(p_s,\alpha_s,\beta_s),
\]
and each agent assigns probabilities to the outside option and to all contracts in the menu, with positive probability only on options that minimize the certainty equivalent.

At this point, there are two possible ways to formulate the model. The first approach is the direct extension of our current two-contract setup. Namely, we fix a large but finite collection of candidate contracts in some prescribed parameter region, and then optimize over menus formed by selecting several contracts from that collection. This is conceptually the closest to what we already do in the two-contract case, and it is also the most straightforward path for numerical implementation. For this reason, we plan to start from this approach. The main drawback is computational: once we move from two contracts to three or more, the search space grows very quickly, so the numerical optimization will be substantially more expensive than in the two-contract case.

A second approach would be to let agents choose contract features more continuously. For example, one could allow the agent to choose $(\alpha,\beta)$ and let the insurer determine the premium through a pricing rule
\[
p = P(\alpha,\beta),
\]
or impose some other functional relationship among the contract variables. This would turn the menu-design problem into the design of a pricing schedule or mechanism, rather than the choice of finitely many contracts. Such a formulation may be more flexible and perhaps more elegant from a screening perspective, but it is also significantly harder to formulate and solve. In particular, the insurer would no longer optimize over finitely many parameters, but over a function, which makes both the theory and the numerics much more complicated.

For now, we therefore focus on the first route: a finite multi-contract menu built from a large set of candidate contracts, leaving the more general functional-design formulation for future work.

\subsubsection{The Agents' Nash Equilibrium} \label{sec: NE in multi}

The two-contract framework extends naturally to a menu with $K \geq 3$ contracts. The insurer offers
\[
\mathscr{C} = \{C_1,\dots,C_K\}, \qquad C_s=(p_s,\alpha_s,\beta_s), \quad s=1,\dots,K,
\]
together with the outside option. 
As before, for a given mean-field level $\tilde m$, each agent compares the certainty equivalents generated by the outside option and by every contract in the menu. Since the certainty-equivalent expressions are exactly the same as in the two-contract case, we only change the indexing.

For each $s=1,\dots,K$, let ${\rm CE}^{s}(e;\gamma,\tilde m)$ denote the certainty equivalent under contract $C_s$, and let
\[
e^{s}(\gamma;\tilde m)\in\argmin_{e\ge 0} {\rm CE}^{s}(e;\gamma,\tilde m)
\]
be the associated optimal effort. The outside option is treated in the same way, with certainty equivalent ${\rm CE}^{\mathrm{out}}(e;\gamma,\tilde m)$ and optimal effort $e^{\mathrm{out}}(\gamma;\tilde m)$.

Given these optimal efforts, the menu choice is represented by a mixed choice vector $q(\gamma;\tilde m)
=
\big(q^{\mathrm{out}}(\gamma;\tilde m),q^1(\gamma;\tilde m),\dots,q^K(\gamma;\tilde m)\big)$, with
\[
q^{\mathrm{out}}(\gamma;\tilde m)+\sum_{s=1}^K q^s(\gamma;\tilde m)=1,
\qquad
q^{\mathrm{out}}\ge 0, \quad q^s(\gamma;\tilde m)\ge 0 \text{ for } s\in\{1,\dots,K\},
\]
and positive probability only on minimized CE options:
\[
q^s(\gamma;\tilde m)=0,
\qquad \forall s\notin
\argmin_{r\in\{\mathrm{out},1,\dots,K\}}
{\rm CE}^{r}\big(e^{r}(\gamma;\tilde m);\gamma,\tilde m\big).
\]
Thus, the finite menu induces an endogenous allocation of choice probabilities across the outside option and the $K$ contracts.

The aggregate mean-field level is then determined directly from the population distribution:
\[
\tilde m
=
\int
\left(
e^{\mathrm{out}}(\gamma;\tilde m) q^{\mathrm{out}}(\gamma;\tilde m)
+
\sum_{s=1}^K e^{s}(\gamma;\tilde m) q^s(\gamma;\tilde m)
\right)
\,d\pi(\gamma).
\]

Therefore, relative to the two-contract case, the Nash equilibrium definition is unchanged in structure: the only difference is that the mixed choice vector now has $K+1$ components.

\subsubsection{The Insurer's Problem} \label{sec: Insurer in multi}

The insurer's objective also extends directly. For each contract $C_s = (p_s, \alpha_s, \beta_s)$, the insurer's expected payoff from a type-$\gamma$ agent under contract $C_s$ is
\[
V^s(C_s;\gamma,\tilde m)
=
p_s-\alpha_s e^{s}(\gamma;\tilde m)
-\beta_s \mu\big(e^{s}(\gamma;\tilde m)+x\tilde m\big),
\qquad s=1,\dots,K,
\]
while the payoff is zero for agents under the outside option.

For a given menu $\mathscr{C}=\{C_1,\dots,C_K\}$ and a corresponding lower-level Nash equilibrium $M$, the insurer's per-capita expected profit is
\[
J(\mathscr{C};M)
=
\int
\left[
\sum_{s=1}^K V^s(C_s;\gamma,\tilde m) q^s(\gamma;\tilde m)
\right]
\,d\pi(\gamma).
\]

The insurer's multi-contract design problem is then
\[
\sup_{\mathscr{C}} J^{\mathrm{menu}}(\mathscr{C}),
\qquad
J^{\mathrm{menu}}(\mathscr{C})
:=
\sup_{M\in\mathcal M(\mathscr{C})} J(\mathscr{C};M),
\]
where $\mathcal M(\mathscr{C})$ denotes the set of mean-field equilibria induced by the menu $\mathscr{C}$.

Again, the key difference from the two-contract case is not conceptual but combinatorial. The lower-level Nash equilibrium and profit formulas extend in a straightforward way, but the insurer must now optimize jointly over a larger menu. As a result, even for a finite candidate set of contracts, the number of possible menus grows rapidly with $K$, making numerical implementation substantially more expensive.

\subsubsection{Numerical Optimization of the Multi-Contract Menu} \label{sec: Insurer numerics in multi}

A full brute-force optimization in the multi-contract problem becomes infeasible very quickly. If the insurer chooses each contract from a finite candidate set of size $N$, then an exhaustive search over all $K$ contract menus requires evaluating through $N^K$ possibilities. This is manageable for the one-contract and two-conract problems, but it becomes prohibitively expensive for $\geq 3$ contracts.

For this reason, we do not use global brute-force optimization in the multi-contract problem. Instead, we use a layered search procedure. Starting from the optimized two-contract benchmarks, we enlarge the menu step by step: first we search for a promising third contract conditioned on an optimized two-contract menu, then search for a promising fourth contract, and so on. Rather than enumerating all possible $K$-tuples, we build the menu sequentially and keep only the most promising intermediate candidates at each stage.

The multi-contract experiments use the same population calibration as the optimized two-contract benchmark: $N=5000$, $\gamma\sim\mathcal N(1.5,0.9^2)$ truncated to $10^{-4},5$, $c\equiv 2.2$, $\sigma\equiv 2.0$, and $x=1.0$. We start from the optimized two-contract menu $C_A^*=(2.2,0.04,0.35)$ and $C_B^*=(7.1,0.04,0.94)$, and search over a finite candidate set for the additional contract.

Before reporting the multi-contract results, we use the same interpretation of choice shares as in the two-contract case. In the reported examples, the indifference sets are $\pi$-null, so the mixed choice vector is effectively pure for almost every $\gamma$. We also checked the fixed-point equation numerically for the reported optimized multi-contract menu, and the updated parameter choice yields a unique equilibrium mean-field level.

First we consider a constrained three-contract extension built directly on top of the optimized two-contract baseline, which is $C_A^*=(2.2,\,0.04,\,0.35), \ C_B^*=(7.1,\,0.04,\,0.94)$, and search only for a third contract $C$. The best third contract found is $C_C^* = (2.0, 0.074, 0.32)$. The total equilibrium participation share is $0.6520$, with equilibrium choice shares $0.3480$ for the outside option, $0.2576$ for contract $A$, $0.2356$ for contract $B$, and $0.1588$ for contract $C$. Under the menu $(A, B, C)$, the insurer's per-capita profit slightly increases from 0.4635 to 0.4668. Thus, allowing one additional contract does improve profit, but only by a small percentage. The resulting comparison is shown in \Cref{fig:restrained-third-contract}.

\begin{figure}[ht]
    \centering
    \includegraphics[width=0.9\textwidth]{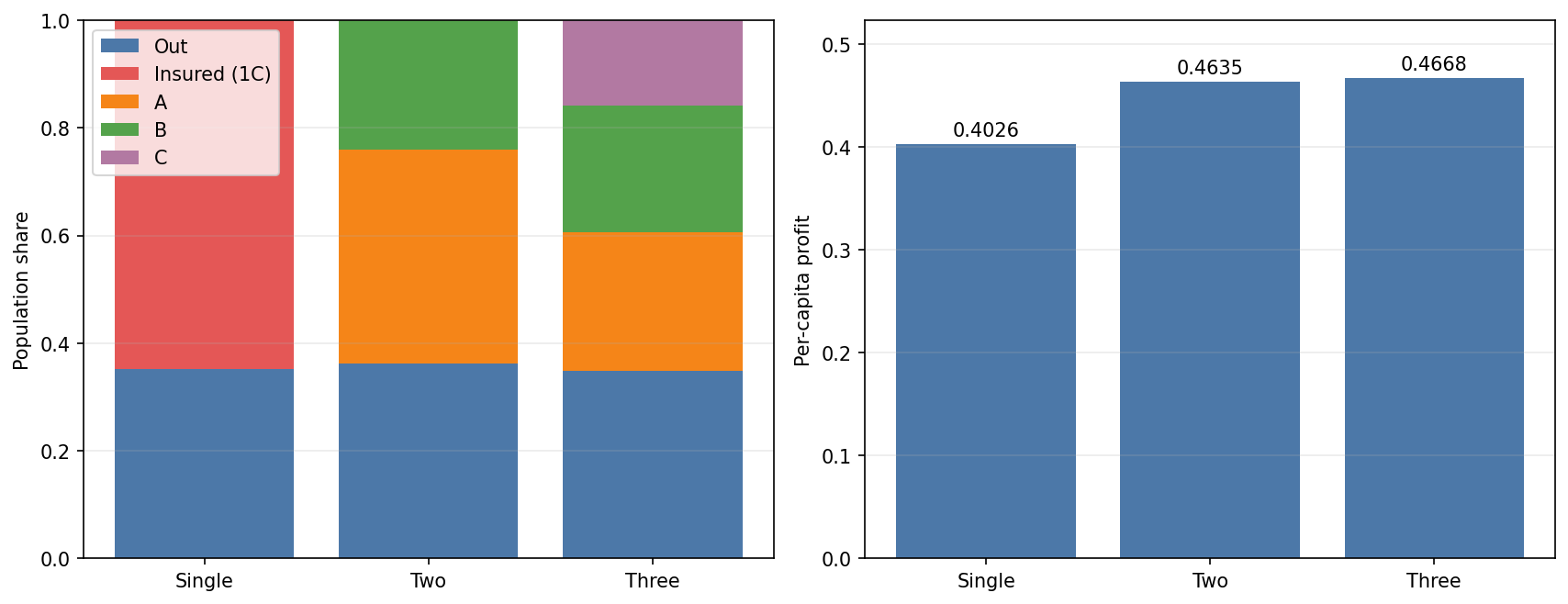}
    \caption{Fixed Two-Contract Baseline with an Additional Third Contract.}
    \label{fig:restrained-third-contract}
\end{figure}

We also repeat the three-contract experiment starting from the top 10 two-contract menus, rather than fixing only the single best two-contract. For each of these top 10 menus, we search over the candidate set for an additional third contract $C$, and compare the resulting three-contract menus. The outcome is the same as in the experiment using fixed best $A, B$, which is $C_A^*=(2.2,\,0.04,\,0.35), \ C_B^*=(7.1,\,0.04,\,0.94), C_C^* = (2.0, 0.074, 0.32)$. Overall, in our optimization experiments, the optimized three-contract menu only achieves slight improvement over the optimized two-contract menu.

\section{Conclusions and Future Outlook}\label{sec:6}

This paper studies insurance contract design under moral hazard, endogenous participation, and strategic risk interdependence, motivated by cyber-insurance markets in which an agent's loss depends not only on individual prevention effort but also on the aggregate effort of the insured population. To address the curse of dimensionality inherent in the finite $N$-player game, we developed a heterogeneous mean-field framework. For a fixed contract $(p, \alpha, \beta)$, we established the existence of a mean-field Nash equilibrium via measurable-selection and Kakutani fixed-point arguments, and proved uniqueness under a contraction condition on the interdependence parameter $x$. We then embedded this equilibrium characterization into the insurer's upper-level Stackelberg problem, proving the existence of upper-level $\varepsilon$-optimal contracts under equilibrium multiplicity, and an exact Stackelberg equilibrium under unique lower-level equilibrium. Extending the model to a contract menu, we showed numerically that menu-based screening improves insurer per-capita profit relative to the single-contract benchmark by inducing self-selection across risk-aversion types.

Ultimately, the framework presented in this manuscript offers a comprehensive and mathematically rigorous resolution to the challenge of pricing insurance under endogenous participation and strategic risk interdependence.  By embedding the agents' heterogeneous mean-field equilibrium directly into the insurer's Stackelberg optimization, this work provides a robust, self-contained foundation for future research in systemic risk management. In particular, the mean-field equilibrium characterization developed here makes analytically tractable the coupling between individual contract incentives and population-level risk exposure—a coupling that is central to understanding systemic risk in interconnected insurance markets.

Beyond these theoretical contributions, our framework offers direct managerial insights for the underwriting of interconnected risks. The equilibrium analysis shows how contract terms shape participation and prevention effort, and how the insurer can use this response in contract design.  The contract-menu extension then shows how these instruments can be organized into tiered products that screen heterogeneous agents across risk-aversion types. We demonstrate that by operationalizing pre-screening technologies within a multi-tier contract menu, insurers can effectively sever the contagion of moral hazard, aligning individual incentives and restoring profitability in markets that might otherwise suffer from systemic failure.

A natural direction for future research is to enrich the structure of risk interdependence. In the present model, interaction is summarized by the scalar interdependence parameter $x$, so each agent is affected by the same aggregate mean-field effort. In many interconnected-risk environments, however, exposure is heterogeneous: one firm may be strongly linked to a small number of suppliers, while another may be weakly exposed to a broad set of counterparties. This suggests replacing the scalar interaction parameter with pairwise intensities $x_{ij}$, or with a graphon representation of the interaction network in large populations. Such an extension would allow the model to capture community structure, heterogeneous network positions, and richer channels of localized contagion in cyber-insurance markets.

Other extensions concern the design of richer contract instruments and the treatment of equilibrium multiplicity. The finite-menu analysis in this paper could be extended toward continuous pricing or coverage schedules, where the insurer designs a menu over a continuum of contract characteristics rather than selecting a finite set of contracts. In addition, while the present paper handles lower-level multiplicity through performance criteria and $\varepsilon$-optimal contracts, a sharper theory of equilibrium selection and exact upper-level existence under multiplicity would further strengthen the Stackelberg contract-design framework.

Together, these extensions suggest that the heterogeneous mean-field approach to insurance contract design developed in this paper is not only a tractable solution to the specific problem studied here, but also a flexible analytical framework adaptable to richer network structures, more complex contract instruments, and a broader class of systemic risk environments.

\bibliographystyle{abbrvnat}
\bibliography{references}

\appendix
\section{Appendix} \label{sec: appendix}

In the following appendices, we present the proofs of the main results of the paper.

\subsection{Proof of \Cref{thm_NE}} \label{sec: proof of existence}
To establish existence of equilibrium, we define a best-response correspondence $\Gamma$ at the population level. Recall that $E^{\mathrm{in}}(\theta;\tilde m)$ and $E^{\mathrm{out}}(\theta;\tilde m)$ are the optimal effort correspondences in \eqref{eq: optimal effort correspondences}. First, we define the optimal certainty-equivalent gap
\begin{equation} \label{eq:CEgap}
\Delta {\rm CE}(\theta;\tilde m)
:= \min_{e\in E^{\mathrm{in}}(\theta;\tilde m)} {\rm CE}^{\mathrm{in}}(e;\theta,\tilde m)
   - \min_{e\in E^{\mathrm{out}}(\theta;\tilde m)} {\rm CE}^{\mathrm{out}}(e;\theta,\tilde m).
\end{equation}
Next, define the participation correspondence
\[
Q(\theta;\tilde m)
:=
\begin{cases}
\{1\}
& \text{if } \Delta {\rm CE}(\theta;\tilde m) < 0, \\[6pt]
\{0\},
& \text{if } \Delta {\rm CE}(\theta;\tilde m) > 0, \\[6pt]
[0, 1],
& \text{if } \Delta {\rm CE}(\theta;\tilde m)=0.
\end{cases}
\]
This captures the mixed participation rule: a type strictly enters if the inside option is strictly better, strictly stays out if the outside option is strictly better, and may mix when indifferent.

Then we define the feasible expected-effort correspondence
\[
\mathcal E(\theta; \tilde m) := \{q a + (1-q) b: a \in E^{\mathrm{in}}(\theta;\tilde m), b \in E^{\mathrm{out}}(\theta;\tilde m), q \in Q(\theta; \tilde m)\},
\]
or equivalently,
\[
\mathcal E(\theta;\tilde m)
=
\begin{cases}
E^{\mathrm{in}}(\theta;\tilde m),
& \text{if } \Delta {\rm CE}(\theta;\tilde m) < 0, \\[6pt]
E^{\mathrm{out}}(\theta;\tilde m),
& \text{if } \Delta {\rm CE}(\theta;\tilde m) > 0, \\[6pt]
\operatorname{conv}\!\Big(
E^{\mathrm{in}}(\theta;\tilde m)
\cup
E^{\mathrm{out}}(\theta;\tilde m)
\Big),
& \text{if } \Delta {\rm CE}(\theta;\tilde m)=0.
\end{cases}
\]
Thus $\mathcal E(\theta;\tilde m)$ represents the set of feasible expected efforts generated by pure or mixed participation decisions.

For each $\tilde m \in [0,E_{\max}]$, define $\Gamma(\tilde m)$ as the set of all feasible population-average expected efforts generated by measurable selections of individual best responses:
\[
\Gamma(\tilde m)
:=
\left\{
\int_\Theta e(\theta)\, d\pi(\theta)
:
e(\theta)\in \mathcal E(\theta;\tilde m)
\text{ for $\pi$-a.e.~}\theta
\right\},
\]

Next, we will apply the Kakutani fixed-point theorem to $\Gamma$. To do so, we verify that for each $\tilde{m} \in [0, E_{\max}]$, the set $\Gamma({\tilde{m}})$ is nonempty, convex, and contained in $[0,E_{\max}]$, and $\Gamma$ has a closed graph.

\vspace{5pt}\paragraph{\textbf{Step 1. $\Gamma(\tilde{m})$ is nonempty for every $\tilde{m} \in [0, E_{\max}]$.}} 

Fix an arbitrary $\tilde{m} \in [0, E_{\max}]$. We show that $\Gamma(\tilde{m}) \neq \emptyset$, i.e., there exists a measurable function $e(\cdot)$ such that $e(\theta) \in \mathcal{E}(\theta; \tilde{m})$ for $\pi$-a.e.~$\theta$, and hence $\int_\Theta e(\theta)\, d\pi(\theta) \in \Gamma(\tilde{m})$.

Under Assumptions (A2) and (A3), $\mu(z)$ and $\lambda(z)$ are continuous on $[0, (1+x)E_{\max}]$, $\gamma(\theta), c(\theta), \sigma(\theta)$ are measurable and uniformly bounded, then the certainty equivalent functions
\begin{align*}
&{\rm CE}^{\mathrm{out}}(e;\theta,\tilde m) = \mu(e+x\tilde m)+c(\theta)e+\tfrac12\gamma(\theta)\lambda(e+x\tilde m),\\
&{\rm CE}^{\mathrm{in}}(e;\theta,\tilde m) = p+(c(\theta)-\alpha)e+\tfrac12\gamma(\theta)\alpha^2\sigma(\theta)^2
+(1-\beta)\mu(e+x\tilde m)+\tfrac12\gamma(\theta)(1-\beta)^2\lambda(e+x\tilde m)
\end{align*}
are continuous on $[0, E_{\max}]$. Therefore, by the Weierstrass Extreme Value Theorem, each minimization problem admits at least one minimizer, thus
\[
E^{\mathrm{out}}(\theta;\tilde m)=\argmin_{e\in[0,E_{\max}]} {\rm CE}^{\mathrm{out}}(e;\theta,\tilde m),\qquad E^{\mathrm{in}}(\theta;\tilde m)=\argmin_{e\in[0,E_{\max}]} {\rm CE}^{\mathrm{in}}(e;\theta,\tilde m)
\]
are nonempty compact subsets of $[0, E_{\max}]$.

Next we prove that $\Delta {\rm CE}(\theta; \tilde{m})$ is measurable in $\theta$. Since ${\rm CE}^{\mathrm{in}}(e;\theta,\tilde m)$ and ${\rm CE}^{\mathrm{out}}(e;\theta,\tilde m)$ are continuous with respect to $e$ and measurable with respect to $\theta$ (by Assumption (A2) and (A3), $\gamma, c, \sigma$ are measurable and $\mu, \lambda$ are continuous), $(\theta,e)\mapsto {\rm CE}^{\mathrm{out}}(e;\theta,\tilde m)$ and $(\theta,e)\mapsto {\rm CE}^{\mathrm{in}}(e;\theta,\tilde m)$ are Carathéodory. By the Measurable Maximum Theorem \citep[Theorem 18.19]{aliprantis2006infinite}, $\theta \mapsto \min_{e\in[0,E_{\max}]} {\rm CE}^{\mathrm{out}}(e;\theta,\tilde m)$, $\theta \mapsto \min_{e\in[0,E_{\max}]} {\rm CE}^{\mathrm{in}}(e;\theta,\tilde m)$ are measurable, thus $\theta \mapsto \Delta {\rm CE}(\theta; \tilde{m})$ is measurable. Therefore, the regions $\{\Delta {\rm CE}<0\}, \{\Delta {\rm CE}>0\}, \{\Delta {\rm CE}=0\}$ are measurable subsets of $\Theta$.

By construction,
\begin{itemize}
\item if $\Delta {\rm CE}(\theta;\tilde m)<0$, then $\mathcal E(\theta;\tilde m)=E^{\mathrm{in}}(\theta;\tilde m)$;
\item if $\Delta {\rm CE}(\theta;\tilde m)>0$, then $\mathcal E(\theta;\tilde m)=E^{\mathrm{out}}(\theta;\tilde m)$;
\item if $\Delta {\rm CE}(\theta;\tilde m)=0$, then $\mathcal E(\theta;\tilde m)=\operatorname{conv}\big(E^{\mathrm{in}}(\theta;\tilde m)\cup E^{\mathrm{out}}(\theta;\tilde m)\big)$.
\end{itemize}
Because $E^{\mathrm{in}}(\theta;\tilde m)$ and $E^{\mathrm{out}}(\theta;\tilde m)$ are nonempty compact subsets of $[0, E_{\max}]$, and the convex hull of a union of compact sets in finite dimensions is compact, $\mathcal E(\theta;\tilde m)$ is a nonempty compact subset of $[0, E_{\max}]$ for every $\theta$.

Furthermore, the Measurable Maximum Theorem applied previously guarantees that the argmin correspondences $E^{\mathrm{in}}(\cdot; \tilde m)$ and $E^{\mathrm{out}}(\cdot; \tilde m)$ are weakly measurable. Because the partition sets based on $\Delta {\rm CE}(\cdot; \tilde{m})$ are measurable, and the operations of finite union and taking the convex hull preserve the weak measurability of correspondences (see \citep[Proposition 2.3 and Theorem 9.1]{him1975}), the piecewise correspondence $\theta \mapsto \mathcal E(\theta; \tilde{m})$ is also weakly measurable.

Therefore, by the Kuratowski--Ryll-Nardzewski measurable selection theorem (\citep[Theorem 18.13]{aliprantis2006infinite}), there exists a measurable function $e(\theta)$ such that $e(\theta) \in \mathcal E(\theta; \tilde{m})$ for $\pi$-a.e.~$\theta$. By the construction of $\mathcal E(\theta; \tilde m)$, this selection $e(\theta)$ can be interpreted as the expected effort. Because $0 \leq e(\theta) \leq E_{\max}$, the Lebesgue integral $\int_{\Theta} e(\theta) d\pi(\theta) \in [0, E_{\max}]$ is well-defined. Thus,
\[
\int_{\Theta} e(\theta) d\pi(\theta) \in \Gamma(\tilde{m}),
\]
which proves that $\Gamma(\tilde{m}) \neq \emptyset$ for all $\tilde{m} \in [0, E_{\max}]$.

\vspace{5pt}\paragraph{\textbf{Step 2: $\Gamma(\tilde{m})$ is convex for every $\tilde{m} \in [0, E_{\max}]$.}}

Under Assumption (A2), $\mu(z)$ and $\lambda(z)$ are convex in $z$. Since $e \mapsto e + x\tilde{m}$ is affine,
\[
e \mapsto \mu(e + x\tilde{m}), \quad e \mapsto \lambda(e + x\tilde{m})
\]
are convex on $[0, E_{\max}]$. Since $\gamma(\theta) \geq 0$ and $1-\beta \geq 0$,
\[
e \mapsto {\rm CE}^{\mathrm{out}}(e; \theta; \tilde{m}), \quad e \mapsto {\rm CE}^{\mathrm{in}}(e; \theta; \tilde{m})
\]
are convex on $[0, E_{\max}]$.

Therefore, the sets of minimizers of ${\rm CE}^{\mathrm{in/out}}$
\[
E^{\mathrm{out}}(\theta;\tilde m)=\argmin_{e\in[0,E_{\max}]} {\rm CE}^{\mathrm{out}}(e;\theta,\tilde m),\qquad E^{\mathrm{in}}(\theta;\tilde m)=\argmin_{e\in[0,E_{\max}]} {\rm CE}^{\mathrm{in}}(e;\theta,\tilde m)
\]
are convex subsets of $[0, E_{\max}]$. By definition, $\mathcal E(\theta; \tilde{m})$ equals $E^{\mathrm{out}}(\theta;\tilde m)$, or $E^{\mathrm{in}}(\theta;\tilde m)$, or $\mathrm{conv} \big(E^{\mathrm{in}}(\theta;\tilde m) \cup E^{\mathrm{out}}(\theta;\tilde m)\big)$. Thus $\mathcal E(\theta; \tilde{m})$ is also convex.

Take any $y_1, y_2 \in \Gamma(\tilde{m})$. By definition, there exist measurable expected-effort selections $e_1(\cdot)$ and $e_2(\cdot)$ such that $e_k(\theta) \in \mathcal E(\theta; \tilde{m})$ for $\pi$-a.e.~$\theta$ and $y_k = \int_{\Theta} e_k(\theta) d\pi(\theta)$, $k = 1, 2$.

For any $t \in [0, 1]$, define $e_t(\theta) := t e_1(\theta) + (1-t) e_2(\theta)$, measurable with respect to $\theta$. Since $\mathcal E(\theta; \tilde{m})$ is convex for $\pi$-a.e.~$\theta$, we have $e_t(\theta) \in \mathcal E(\theta; \tilde{m})$. Thus
\[
\int_{\Theta} e_t(\theta) d\pi(\theta) \in \Gamma(\tilde{m}).
\]
Then by linearity,
\[
\int_{\Theta} e_t(\theta) d\pi(\theta) = t\int_{\Theta} e_1(\theta) d\pi(\theta) + (1-t)\int_{\Theta} e_2(\theta) d\pi(\theta) = ty_1 + (1-t)y_2 \in \Gamma(\tilde{m}),
\]
which shows that $\Gamma(\tilde{m})$ is convex.

\vspace{5pt}\paragraph{\textbf{Step 3. $\Gamma(\tilde{m}) \subset [0, E_{\max}]$.}} For any $y \in \Gamma(\tilde{m})$, there exists a measurable expected-effort selection $e(\cdot)$ such that
\[
y = \int_{\Theta} e(\theta) d\pi(\theta).
\]
Under Assumption (A1), the feasible effort set is $[0, E_{\max}]$. Then $E^{\mathrm{in}}(\theta; \tilde{m}), E^{\mathrm{out}}(\theta; \tilde{m}) \subset [0, E_{\max}]$, $\mathrm{conv}(E^{\mathrm{in}}(\theta; \tilde{m}) \cup E^{\mathrm{out}}(\theta; \tilde{m})) \subset [0, E_{\max}]$. Thus $\mathcal E(\theta; \tilde{m}) \subset [0, E_{\max}]$. Hence for $\pi$-a.e.~$\theta$,
\[
0 \leq e(\theta) \leq E_{\max}.
\]
By integrating,
\[
0 \leq \int_{\Theta} e(\theta) d\pi(\theta) \leq \int_{\Theta} E_{\max} d\pi(\theta) = E_{\max}.
\]
Thus $0 \leq y \leq E_{\max}$, which implies $\Gamma(\tilde m)\subset[0,E_{\max}]$.

\vspace{5pt}\paragraph{\textbf{Step 4. $\Gamma$ has a closed graph.}}

First, we show that $\tilde{m} \mapsto \mathcal E(\theta;\tilde m)$ has a closed graph. For this, we just need to prove that if $\tilde{m}_n \to \tilde{m}$, $e_n \to e$, $e_n \in \mathcal{E}(\theta; \tilde{m}_n)$, then $e \in \mathcal{E}(\theta; \tilde{m})$.

Fix $\theta$. Under Assumption (A2), $\mu$ and $\lambda$ are continuous, then ${\rm CE}^{\mathrm{in}}(e; \theta, \tilde{m})$ and ${\rm CE}^{\mathrm{out}}(e; \theta, \tilde{m})$ are continuous in $(e, \tilde{m})$. By Berge's Maximum Theorem,
\[
E^{\mathrm{out}}(\theta;\tilde m)=\argmin_{e\in[0,E_{\max}]} {\rm CE}^{\mathrm{out}}(e;\theta,\tilde m),\qquad E^{\mathrm{in}}(\theta;\tilde m)=\argmin_{e\in[0,E_{\max}]} {\rm CE}^{\mathrm{in}}(e;\theta,\tilde m)
\]
are nonempty, compact-valued and upper hemicontinuous in $\tilde{m}$. The value functions
\[
\min_{e\in[0,E_{\max}]} {\rm CE}^{\mathrm{out}}(e;\theta,\tilde m), \qquad \min_{e\in[0,E_{\max}]} {\rm CE}^{\mathrm{in}}(e;\theta,\tilde m)
\]
are continuous in $\tilde{m}$, thus
\[
\Delta {\rm CE}(\theta;\tilde m)
= \min_{e\in E^{\mathrm{in}}(\theta;\tilde m)} {\rm CE}^{\mathrm{in}}(e;\theta,\tilde m)
   - \min_{e\in E^{\mathrm{out}}(\theta;\tilde m)} {\rm CE}^{\mathrm{out}}(e;\theta,\tilde m)
\]
is continuous in $\tilde{m}$.

Let $\Delta_n := \Delta {\rm CE}(\theta; \tilde{m}_n), \Delta := \Delta {\rm CE}(\theta; \tilde{m})$. By continuity, $\Delta_n \to \Delta$. 

If $\Delta < 0$, then there exists $N$ large enough, such that $\Delta_n < 0$ for $n \geq N$. Then for $n \geq N$, $\mathcal E(\theta; \tilde{m}_n) = E^{\mathrm{in}}(\theta; \tilde{m}_n)$, $e_n \in  E^{\mathrm{in}}(\theta; \tilde{m}_n)$. Since $E^{\mathrm{in}}$ is compact-valued and upper hemicontinuous, $\tilde{m} \mapsto E^{\mathrm{in}}(\theta; \tilde{m})$ has a closed graph. $\tilde{m}_n \to \tilde{m}$, $e_n \to e$ with $e_n \in E^{\mathrm{in}}(\theta; \tilde{m}_n)$, then $e \in E^{\mathrm{in}}(\theta; \tilde{m}) = \mathcal E(\theta; \tilde{m})$. Similarly, if $\Delta > 0$, $e \in E^{\mathrm{out}}(\theta; \tilde{m}) = \mathcal E(\theta; \tilde{m})$.

If $\Delta = 0$, consider the sequence $\Delta_n$. If there exist infinitely many $n$ satisfying $\Delta_n < 0$, we can choose a strictly negative subsequence $\{\Delta_{n_k}\}$, meaning $e_{n_k} \in E^{\mathrm{in}}(\theta; \tilde{m}_{n_k})$. Because $\tilde{m}_{n_k} \to \tilde{m}$ and $e_{n_k} \to e$, the closed graph property of $E^{\mathrm{in}}$ yields $e \in E^{\mathrm{in}}(\theta; \tilde{m}) \subset \mathcal E(\theta; \tilde{m})$. Similarly, if there exist infinitely many $n$ such that $\Delta_n > 0$, we obtain $e \in E^{\mathrm{out}}(\theta; \tilde{m}) \subset \mathcal E(\theta; \tilde{m})$. 

If neither case holds, then there must exist infinitely many $n$ such that $\Delta_n = 0$. We can then select a subsequence $e_{n_k}$ with $\Delta_{n_k} = 0$, meaning $e_{n_k} \in \mathcal E(\theta;\tilde m_{n_k}) = \operatorname{conv}\big( E^{\mathrm{in}}(\theta;\tilde m_{n_k}) \cup E^{\mathrm{out}}(\theta;\tilde m_{n_k}) \big)$. This implies we can write $e_{n_k} = t_k a_k + (1-t_k) b_k$, where $t_k \in [0, 1]$, $a_k \in E^{\mathrm{in}}(\theta;\tilde m_{n_k})$, and $b_k \in E^{\mathrm{out}}(\theta;\tilde m_{n_k})$. 

Because the sequence of tuples $(t_k, a_k, b_k)$ is bounded within the compact space $[0,1] \times [0, E_{\max}]^2$, the Bolzano-Weierstrass theorem guarantees we can pass to a common convergent subsequence (which we do not relabel) such that $t_k \to t$, $a_k \to a$, and $b_k \to b$. Furthermore, because $E^{\mathrm{in}}$ and $E^{\mathrm{out}}$ have closed graphs, it follows that $t \in [0,1]$, $a \in E^{\mathrm{in}}(\theta;\tilde m)$, and $b \in E^{\mathrm{out}}(\theta;\tilde m)$. Since $e_{n_k} \to e$, taking the limit on both sides yields $e = ta + (1-t)b$, which proves that $e \in \operatorname{conv}\big( E^{\mathrm{in}}(\theta;\tilde m) \cup E^{\mathrm{out}}(\theta;\tilde m) \big) = \mathcal E(\theta;\tilde m)$.

In all possible cases, $e \in \mathcal E(\theta;\tilde m)$. Therefore, the graph of $\tilde{m} \mapsto \mathcal E(\theta;\tilde m)$ is closed.

Since $\mathcal{E}(\theta; \tilde{m}) \subset [0, E_{\max}]$ is nonempty, closed and convex for every $(\theta, \tilde{m})$, and $\theta \mapsto \mathcal{E}(\theta; \tilde{m})$ is a measurable correspondence as shown in Step 1, the Aumann integral correspondence
\[
\Gamma(\tilde m)=\Big\{\int_\Theta e(\theta)\,d\pi(\theta):\ e(\theta)\in\mathcal E(\theta;\tilde m)\ \text{for $\pi$-a.e.~}\theta\Big\}
\]
is well-defined. We now verify the hypotheses of the closed-graph theorem for Aumann integrals. We apply the theorem on the completion of the population probability space, which does not affect any $\pi$-a.e.~selection or integral. In our setting, the parameter space is the compact metric space $[0, E_{\max}]$, and the value space is $\mathbb R$, which is a separable Banach space. As shown above, for each fixed $\theta$, the correspondence $\tilde m \mapsto \mathcal E(\theta;\tilde m)$ has a closed graph. Moreover, the correspondence $\mathcal E$ is dominated by the compact valued correspondence $K(\theta) := [0, E_{\max}]$, since for all $\theta, \tilde m$,
\[
\mathcal E(\theta;\tilde m)\subset[0,E_{\max}].
\]
The correspondence $K$ is nonempty, weakly compact-valued, and integrably bounded, since $E_{\max}<\infty$ and $\pi$ is a probability measure. Therefore, by \citet[Theorem 3.2]{yannelis1990upper}, which generalizes the classical result of \citet{aumann1976elementary}, the Aumann integral correspondence $\Gamma$ has a closed graph.

\vspace{5pt}\paragraph{\textbf{Step 5. Reconstruction of a mixed equilibrium.}}

By Step 1--4, the correspondence $\Gamma: [0, E_{\max}] \rightrightarrows [0, E_{\max}]$ has nonempty convex values and a closed graph. Since $[0,E_{\max}]$ is a nonempty, compact, and convex subset of $\mathbb R$, the conditions of Kakutani’s fixed-point theorem are satisfied. Thus, there exists a fixed point 
\[
\tilde{m} \in \Gamma(\tilde{m}).
\]
By definition of $\Gamma$, there exists a measurable function $e(\theta)$ such that $e(\theta) \in \mathcal E(\theta; \tilde m)$ for $\pi$-a.e.~$\theta$, and
\[
\tilde m = \int_{\Theta} e(\theta) d\pi(\theta).
\]
From Step 1, we know that $\{\Delta {\rm CE} < 0\}, \{\Delta {\rm CE} > 0\}$ and $\{\Delta {\rm CE} = 0\}$ are all measurable subsets of $\Theta$.

By the Kuratowski--Ryll-Nardzewski measurable selection theorem, we know that there exists measurable selections $\overline{e}^{\mathrm{in}}(\theta) \in E^{\mathrm{in}}(\theta; \tilde{m})$, $\overline{e}^{\mathrm{out}}(\theta) \in E^{\mathrm{out}}(\theta; \tilde{m})$ for $\pi$-a.e.~$\theta$.

On $\{\Delta {\rm CE} < 0\}$, since $\mathcal E(\theta; \tilde m) = E^{\mathrm{in}}(\theta; \tilde m)$, we have $e(\theta) \in E^{\mathrm{in}}(\theta; \tilde m)$. Define
\[
e^{\mathrm{in}}(\theta; \tilde m) := e(\theta), \qquad e^{\mathrm{out}}(\theta; \tilde m) := \overline{e}^{\mathrm{out}}(\theta), \qquad q(\theta; \tilde m) := 1.
\]
On $\{\Delta {\rm CE} > 0\}$, since $\mathcal E(\theta; \tilde m) = E^{\mathrm{out}}(\theta; \tilde m)$, we have $e(\theta) \in E^{\mathrm{out}}(\theta; \tilde m)$. Define
\[
e^{\mathrm{in}}(\theta; \tilde m) := \overline{e}^{\mathrm{in}}(\theta), \qquad e^{\mathrm{out}}(\theta; \tilde m) := e(\theta), \qquad q(\theta; \tilde m) := 0.
\]
On $\{\Delta {\rm CE} = 0\}$, since $E^{\mathrm{in}}(\theta; \tilde m)$ and $E^{\mathrm{out}}(\theta; \tilde m)$ are nonempty compact convex subsets of $\mathbb R$, they are closed intervals. For each $\theta \in \{\Delta {\rm CE} = 0\}$, let $u(\theta)$ be the closest point (metric projection) to $e(\theta)$ in $E^{\mathrm{in}}(\theta; \tilde m)$, and $v(\theta)$ be the closest point to $e(\theta)$ in $E^{\mathrm{out}}(\theta; \tilde m)$. Because the metric projection of a measurable function onto a weakly measurable, compact-valued correspondence is measurable \citep[see, e.g., Chapter 18 in][]{aliprantis2006infinite}, $u(\theta)$ and $v(\theta)$ are well-defined measurable functions, with $u(\theta) \in E^{\mathrm{in}}(\theta; \tilde m)$ and $v(\theta) \in E^{\mathrm{out}}(\theta; \tilde m)$. Define
\[
q(\theta; \tilde m) = \begin{cases}
    \frac{e(\theta) - v(\theta)}{u(\theta) - v(\theta)} & \text{ if } u(\theta) \neq v(\theta),\\[6pt]
    \frac{1}{2} & \text{ if } u(\theta) = v(\theta).
\end{cases}
\]
\footnote{This explicit geometric reconstruction of the mixing probability $q$ relies on the one-dimensional nature of the effort space. In $\mathbb{R}^1$, any point in the convex hull of two disjoint intervals can be represented exactly as a convex combination of its closest projections onto those intervals. In multidimensional spaces, collinearity with the closest projections is not guaranteed. Furthermore, if $u(\theta) = v(\theta)$, then $e(\theta)$ lies exactly in the intersection of both optimal sets, meaning the agent's expected effort is identical under both options. The choice of $q=1/2$ is therefore mathematically valid to ensure a measurable selection, but any $q \in [0,1]$ would suffice.}
Then $q(\theta; \tilde m)$ is measurable, and $q(\theta; \tilde m) \in [0, 1]$. Because $e(\theta) \in \operatorname{conv}\!\Big(E^{\mathrm{in}}(\theta;\tilde m) \cup E^{\mathrm{out}}(\theta;\tilde m)\Big)$, it follows algebraically that $e(\theta) = q(\theta; \tilde m) u(\theta) + (1-q(\theta; \tilde m)) v(\theta)$. We then define $e^{\mathrm{in}}(\theta; \tilde m) := u(\theta)$ and $e^{\mathrm{out}}(\theta; \tilde m) := v(\theta)$.

Combining the three regions, we obtain measurable functions
\[
e^{\mathrm{in}}(\theta; \tilde m) \in E^{\mathrm{in}}(\theta;\tilde m), \qquad e^{\mathrm{out}}(\theta; \tilde m) \in E^{\mathrm{out}}(\theta;\tilde m), \qquad q(\theta; \tilde m) \in [0, 1],
\]
and
\[
e(\theta) = q(\theta; \tilde m) e^{\mathrm{in}}(\theta; \tilde m) + (1-q(\theta; \tilde m)) e^{\mathrm{out}}(\theta; \tilde m),
\]
for $\pi$-a.e.~$\theta$. Therefore
\[
\tilde m = \int_{\Theta} \big[q(\theta; \tilde m) e^{\mathrm{in}}(\theta; \tilde m) + (1-q(\theta; \tilde m)) e^{\mathrm{out}}(\theta; \tilde m)\big] d\pi(\theta).
\]
Thus $(\tilde{m}, \{e^{\mathrm{in}}(\theta; \tilde{m}), e^{\mathrm{out}}(\theta; \tilde{m}), q(\theta; \tilde m)\}_{\theta \in \Theta})$ is a mean-field Nash equilibrium. This proves the theorem.
\qed

    \subsection{Proof of \Cref{thm_NEuniq}} \label{sec: proof of uniqueness}

We establish the uniqueness of the equilibrium by proving that the aggregate mean-field map $\Phi$ is a strict contraction on $[0, E_{\max}]$. The proof proceeds in four steps. Step 1 demonstrates that under the uniqueness assumptions, the individual best-response correspondences are single-valued and the resulting effort mappings are Lipschitz continuous in $\tilde{m}$. Step 2 shows that, for each fixed $\tilde m$, the participation probability $q(\theta; \tilde m)$ is unique for $\pi$-a.e.~$\theta$. Step 3 proves an $L^1(\pi)$-Lipschitz bound for $q(\cdot; \tilde m)$. Finally, Step 4 synthesizes these results to obtain a global Lipschitz constant for $\Phi$ that is strictly less than $1$. The Banach fixed-point theorem then guarantees the uniqueness of the equilibrium up to $\pi$-a.e.~equality.

    \vspace{5pt}\paragraph{\textbf{Step 1. Single-valuedness and Lipschitz continuity of $e^{\mathrm{in}}$, $e^{\mathrm{out}}$ in $\tilde{m}$.}}

    Recall that the best response functions are defined as correspondences:
    \begin{align*}
        E^{\mathrm{out}}(\theta;\tilde m) &= \argmin_{e\in[0,E_{\max}]} {\rm CE}^{\mathrm{out}}(e;\theta,\tilde m),\\
        E^{\mathrm{in}}(\theta;\tilde m) &= \argmin_{e\in[0,E_{\max}]} {\rm CE}^{\mathrm{in}}(e;\theta,\tilde m).
    \end{align*}
    
To isolate the optimization over the combined state, we apply the change of variables $z = e + x\tilde{m}$. Note that substituting $e = z - x\tilde{m}$ into the certainty equivalents yields some terms that depend solely on $\tilde{m}$ and $\theta$ (e.g., $-(c(\theta)-\alpha)x\tilde{m}$). Because these terms do not affect the minimizer with respect to $z$, we omit them and define the following surrogate objective functions:
\begin{align*}
    &f^{\mathrm{out}}(z; \theta) := \mu(z)+\tfrac12\gamma(\theta)\lambda(z)+c(\theta)z,\\
    &f^{\mathrm{in}}(z; \theta) := (c(\theta)-\alpha)z+(1-\beta)\mu(z)+\tfrac12\,\gamma(\theta)(1-\beta)^2\lambda(z).
\end{align*}
Under Assumption \ref{assum_unique}((U2), (U3)), $\mu$ and $\lambda$ are strictly convex on the compact interval $[0, (1+x) E_{\max}]$, and $\gamma(\theta) \geq \gamma_{\min} > 0$. Hence, both $f^{\mathrm{in}}(z; \theta)$ and $f^{\mathrm{out}}(z; \theta)$ are strictly convex on $[0, (1+x)E_{\max}]$. Therefore, for each $\theta$, the unconstrained minimizers over this encompassing domain,
\[
z^{*, \mathrm{in/out}}(\theta) := \argmin_{z \in [0, (1+x) E_{\max}]} f^{\mathrm{in/out}}(z; \theta),
\]
exist and are unique.

Because $f^{\mathrm{in/out}}$ are strictly convex, their constrained minimizer over any sub-interval is simply the metric projection of the unconstrained minimizer $z^{*, \mathrm{in/out}}(\theta)$ onto that sub-interval. The feasible effort domain $e \in [0, E_{\max}]$ corresponds precisely to the moving sub-interval $z \in [x\tilde{m}, E_{\max} + x\tilde{m}]$. By shifting the domain back to the effort space, the optimal efforts can be expressed using the shift-invariance of the projection operator:
    \[
    e^{\mathrm{in/out}}(\theta; \tilde{m}) = \operatorname{Proj}_{[0, E_{\max}]} \big( z^{*, \mathrm{in/out}}(\theta) - x\tilde{m} \big) := \min\{E_{\max}, (z^{*, \mathrm{in/out}}(\theta) - x\tilde{m})^+\}.
    \]
    Hence the correspondences $E^{\mathrm{in}}$ and $E^{\mathrm{out}}$ reduce to singletons, and we denote their unique elements by $e^{\mathrm{in}}(\theta; \tilde{m})$ and $e^{\mathrm{out}}(\theta; \tilde{m})$.

    To prove Lipschitz continuity, the projection operator $\operatorname{Proj}_{[0, E_{\max}]}(\cdot)$ onto a convex set $[0, E_{\max}]$ is $1$-Lipschitz. Since the minimizer $z^{*, \mathrm{in/out}}(\theta)$ is independent of $\tilde{m}$, for any $\tilde{m}_1, \tilde{m}_2 \in [0, E_{\max}]$, we have:
    \begin{align*}
    |e^{\mathrm{in/out}}(\theta; \tilde{m}_1) - e^{\mathrm{in/out}}(\theta; \tilde{m}_2)| &= \big| \operatorname{Proj}_{[0, E_{\max}]} (z^{*, \mathrm{in/out}} - x\tilde{m}_1) - \operatorname{Proj}_{[0, E_{\max}]} (z^{*, \mathrm{in/out}} - x\tilde{m}_2) \big| \\
    &\leq \big| (z^{*, \mathrm{in/out}} - x\tilde{m}_1) - (z^{*, \mathrm{in/out}} - x\tilde{m}_2) \big| \\
    &= x |\tilde{m}_1 - \tilde{m}_2|.
    \end{align*}
    Thus, the individual best response $e^{\mathrm{in/out}}$ is $x$-Lipschitz continuous in $\tilde{m}$ for all $\theta \in \Theta$.
    \vspace{5pt}\paragraph{\textbf{Step 2. For a fixed $\tilde m$, $q(\theta; \tilde m)$ is unique for $\pi$-a.e.~$\theta$.}}

    Fix $\tilde m \in [0, E_{\max}]$. Recall that in \eqref{eq: participation probability} the participation probability $q(\theta; \tilde m)$ satisfies the mixed participation rule:
    \[
    q(\theta; \tilde{m}) = \begin{cases}
        1 & \text{ if }\;\; \Delta{\rm CE}(\theta; \tilde m) < 0,\\
        0 & \text{ if }\;\; \Delta{\rm CE}(\theta; \tilde m) > 0,\\
        \text{any value in } [0, 1] & \text{ if }\;\; \Delta{\rm CE}(\theta; \tilde m) = 0.
    \end{cases}
    \]
    Define
    \begin{align*}
    &z^{\mathrm{in}}(\theta; \tilde{m}) := e^{\mathrm{in}}(\theta; \tilde{m}) + x\tilde{m} = \operatorname{Proj}_{[x \tilde{m}, E_{\max} + x\tilde{m}]} \big( z^{*, \mathrm{in}}(\theta)\big),\\
    &z^{\mathrm{out}}(\theta; \tilde{m}) := e^{\mathrm{out}}(\theta; \tilde{m}) + x\tilde{m} = \operatorname{Proj}_{[x \tilde{m}, E_{\max} + x\tilde{m}]} \big( z^{*, \mathrm{out}}(\theta)\big).
    \end{align*}
    Using this, rewrite the definition of the optimal certainty-equivalent gap in \eqref{eq:CEgap},
    \begin{align*}
    \Delta {\rm CE}(\theta;\tilde m)
    &= {\rm CE}^{\mathrm{in}}(e^{\mathrm{in}}(\theta; \tilde m); \theta, \tilde m) - {\rm CE}^{\mathrm{out}}(e^{\mathrm{out}}(\theta; \tilde m);\theta, \tilde m) \\[4pt]
    &= p + (c-\alpha)e^{\mathrm{in}}(\theta; \tilde{m}) + \tfrac12\gamma\alpha^2\sigma^2 + (1-\beta)\mu(z^{\mathrm{in}}(\theta; \tilde{m})) + \tfrac12\gamma(1-\beta)^2\lambda(z^{\mathrm{in}}(\theta; \tilde{m})) \\
    &\quad -\Big[ \mu(z^{\mathrm{out}}(\theta; \tilde{m})) + ce^{\mathrm{out}}(\theta; \tilde{m}) + \tfrac12\gamma\lambda(z^{\mathrm{out}}(\theta; \tilde{m}))\Big].
    \end{align*}
    where $\theta = (\gamma, c, \sigma)$. Define
    \[
    g(\sigma, \tilde{m}; \gamma, c) := \Delta {\rm CE}(\gamma, c, \sigma; \tilde{m}).
    \]
    Because $z^{\mathrm{in}}(\theta; \tilde m)$ and $z^{\mathrm{out}}(\theta; \tilde m)$ are independent of $\sigma$, the only $\sigma$-dependent term in $g$ is $\tfrac12\gamma\alpha^2\sigma^2$. Then
    \begin{align*}
    \partial_{\sigma} g = \partial_\sigma \Delta {\rm CE}(\theta;\tilde m) = \gamma \alpha^2 \sigma \geq \gamma_{\min} \alpha^2 \sigma_{\min} =: c_{\sigma} > 0.
    \end{align*}
    Therefore, for each fixed $(\gamma, c, \tilde m)$, the map $\sigma \mapsto g(\sigma, \tilde m; \gamma, c)$ is strictly increasing. Thus the equation
    \[
    \Delta {\rm CE}(\gamma, c, \sigma; \tilde{m}) = 0
    \]
    has at most one solution in $\sigma$.

    Now define the indifference set
    \[
    I(\tilde m) := \{\theta \in \Theta: \Delta {\rm CE} (\theta; \tilde m) = 0\}.
    \]
    As shown in \Cref{sec: proof of existence}, $\Delta {\rm CE}(\cdot; \tilde m)$ is measurable, hence $I(\tilde m)$ is measurable. For each fixed $(\gamma, c)$, define
    \[
    I(\tilde m)_{\gamma, c} := \{\sigma \in [\sigma_{\min}, \sigma_{\max}]: (\gamma, c, \sigma) \in I(\tilde m)\},
    \]
    and this set contains at most one point, therefore has one-dimensional Lebesgue measure $0$. By Fubini's theorem, $I(\tilde m)$ has Lebesgue measure $0$. By \Cref{assum_unique}(U5), $\pi$ admits a bounded density $f$, thus
    \[
    \pi(I(\tilde m)) = \int_{I(\tilde m)} f(\theta) d\theta \leq \|f\|_{\infty} \cdot 0 = 0.
    \]
    Thus $I(\tilde m)$ is a $\pi$-null set. Let $q_1(\theta; \tilde m)$ and $q_2(\theta; \tilde m)$ be two measurable participation probabilities satisfying the mixed participation rule, i.e.
    \begin{alignat*}{2}
        &q_1(\theta; \tilde m) = q_2(\theta; \tilde m) = 1, & \qquad \text{ on } \{\Delta {\rm CE} (\theta; \tilde m) < 0\},\\
        &q_1(\theta; \tilde m) = q_2(\theta; \tilde m) = 0, & \qquad \text{ on } \{\Delta {\rm CE} (\theta; \tilde m) > 0\},\\
        &q_1(\theta; \tilde m) \in [0, 1], \ q_2(\theta; \tilde m) \in [0, 1], & \qquad \text{ on } \{\Delta {\rm CE} (\theta; \tilde m) = 0\}.
    \end{alignat*}
    Then $q_1$ and $q_2$ can differ only on $\{\Delta {\rm CE} (\theta; \tilde m) = 0\} = I(\tilde m)$, which is $\pi$-null. Thus $q_1(\theta; \tilde m) = q_2(\theta; \tilde m)$ for $\pi$-a.e.~$\theta$. 
    
    So for each fixed $\tilde m$, the participation probability $q(\theta; \tilde m)$ is unique for $\pi$-a.e.~$\theta$.

    \vspace{5pt}\paragraph{\textbf{Step 3. $q(\cdot; \tilde m)$ is Lipschitz in $L^1(\pi)$.}}
    
    We first establish that the certainty-equivalent gap $\Delta {\rm CE}(\theta; \tilde{m})$ is Lipschitz in $\tilde{m}$.
    
    Recall from Step 2 that
    \begin{align*}
    &z^{\mathrm{in}}(\theta; \tilde{m}) := e^{\mathrm{in}}(\theta; \tilde{m}) + x\tilde{m} = \operatorname{Proj}_{[x \tilde{m}, E_{\max} + x\tilde{m}]} \big( z^{*, \mathrm{in}}(\theta)\big),\\
    &z^{\mathrm{out}}(\theta; \tilde{m}) := e^{\mathrm{out}}(\theta; \tilde{m}) + x\tilde{m} = \operatorname{Proj}_{[x \tilde{m}, E_{\max} + x\tilde{m}]} \big( z^{*, \mathrm{out}}(\theta)\big).
    \end{align*}
    Because the unconstrained minimizer $z^{*, \mathrm{in/out}}(\theta)$ is independent of $\tilde{m}$, and the projection operator is 1-Lipschitz, the shifted argument $z^{\mathrm{in/out}}(\theta; \tilde{m})$ satisfies the bound
    \[
    |z^{\mathrm{in/out}}(\theta; \tilde{m}_1) - z^{\mathrm{in/out}}(\theta; \tilde{m}_2)| \leq x |\tilde{m}_1 - \tilde{m}_2|.
    \]
    Under \Cref{assum_unique}((U1), (U2)), $\mu, \lambda$ are Lipschitz on $[0, (1+x) E_{\max}]$, and $e \in [0, E_{\max}]$, which means $z^{\mathrm{in}}(\theta; \tilde{m}), z^{\mathrm{out}}(\theta; \tilde{m}) \in [0, (1+x) E_{\max}]$ are bounded. Thus
    \begin{align*}
    &|\mu(z^{\mathrm{in}}(\theta; \tilde{m}_1)) - \mu(z^{\mathrm{in}}(\theta; \tilde{m}_2))| \leq L_{\mu} \cdot |z^{\mathrm{in}}(\theta; \tilde{m}_1) - z^{\mathrm{in}}(\theta; \tilde{m}_2)| \leq L_{\mu} \cdot x |\tilde{m}_1 - \tilde{m}_2|,\\
    &|\lambda(z^{\mathrm{in}}(\theta; \tilde{m}_1)) - \lambda(z^{\mathrm{in}}(\theta; \tilde{m}_2))| \leq L_{\lambda} \cdot |z^{\mathrm{in}}(\theta; \tilde{m}_1) - z^{\mathrm{in}}(\theta; \tilde{m}_2)| \leq L_{\lambda} \cdot x |\tilde{m}_1 - \tilde{m}_2|.
    \end{align*}
    Plug these equations back into $\Delta {\rm CE}(\theta;\tilde{m}_1) - \Delta {\rm CE}(\theta;\tilde{m}_2)$, we have
    \begin{align*}
    &|\Delta {\rm CE}(\theta;\tilde{m}_1) - \Delta {\rm CE}(\theta;\tilde{m}_2)|\\
    &\quad \leq (c_{\max} + \alpha) x |\tilde{m}_1 - \tilde{m}_2| + (1-\beta) L_{\mu} \cdot x |\tilde{m}_1 - \tilde{m}_2| + \frac{1}{2} \gamma_{\max} (1-\beta)^2 L_{\lambda} \cdot x |\tilde{m}_1 - \tilde{m}_2|\\
    &\qquad + L_{\mu} \cdot x |\tilde{m}_1 - \tilde{m}_2| + c_{\max} x |\tilde{m}_1 - \tilde{m}_2| + \frac{1}{2} \gamma_{\max} L_{\lambda} \cdot x |\tilde{m}_1 - \tilde{m}_2|\\
    &\quad = x \Big( 2c_{\max} + \alpha + (2-\beta) L_{\mu} + \frac{1}{2} \gamma_{\max} \big((1-\beta)^2+1\big) L_{\lambda}\Big) \cdot |\tilde{m}_1 - \tilde{m}_2|.
    \end{align*}
for any $\theta \in \Theta$. Let $C_m := x \Big( 2c_{\max} + \alpha + (2-\beta) L_{\mu} + \frac{1}{2} \gamma_{\max} \big((1-\beta)^2+1\big) L_{\lambda}\Big)$,
\[
|\Delta {\rm CE}(\theta;\tilde{m}_1) - \Delta {\rm CE}(\theta;\tilde{m}_2)| \leq C_m \cdot |\tilde{m}_1 - \tilde{m}_2|, \qquad \forall \theta \in \Theta.
\]

We next introduce the participation threshold. From Step 2, we know that $\sigma \mapsto \Delta {\rm CE}(\gamma, c, \sigma; \tilde m)$ is continuous and strictly increasing, and so whenever $\Delta {\rm CE}(\gamma, c, \sigma; \tilde m) = 0$ has a solution, it is unique. Hence whenever $\Delta {\rm CE}(\gamma,c,\sigma_{\min};\tilde m)<0<\Delta {\rm CE}(\gamma,c,\sigma_{\max};\tilde m)$, there exists $\sigma^*(\gamma, c; \tilde{m}) \in [\sigma_{\min}, \sigma_{\max}]$ such that
\[
g(\sigma^*(\gamma, c; \tilde{m}), \tilde{m}; \gamma, c) = \Delta {\rm CE}(\gamma, c, \sigma^*(\gamma, c; \tilde{m}); \tilde{m}) = 0.
\]
To cover the cases that the solution lies outside the feasible interval $[\sigma_{\min}, \sigma_{\max}]$, we define
\begin{equation} \label{eq: truncated sigma}
\hat{\sigma}^*(\gamma,c;\tilde m) = 
\begin{cases}
\sigma_{\min}, & \text{if } \Delta {\rm CE}(\gamma,c,\sigma_{\min};\tilde m)\ge 0,\\[4pt]
\sigma^*(\gamma, c; \tilde{m}), & \text{if } \Delta {\rm CE}(\gamma,c,\sigma_{\min};\tilde m)<0<\Delta {\rm CE}(\gamma,c,\sigma_{\max};\tilde m),\\[4pt]
\sigma_{\max}, & \text{if } \Delta {\rm CE}(\gamma,c,\sigma_{\max};\tilde m)\le 0.
\end{cases}
\end{equation}
Fixed $\gamma, c$, for any given $\tilde{m}_1, \tilde{m}_2$, let $\sigma_1 = \hat \sigma^*(\gamma, c; \tilde{m}_1)$ and $\sigma_2 = \hat \sigma^*(\gamma, c; \tilde{m}_2)$. WLOG, we assume $\sigma_1 \geq \sigma_2$. If both $\sigma_1, \sigma_2 \in (\sigma_{\min}, \sigma_{\max})$,
\begin{align*}
    0 &= \Delta {\rm CE}(\gamma, c, \sigma_1; \tilde{m}_1) - \Delta {\rm CE}(\gamma, c, \sigma_2; \tilde{m}_2)\\
    &= \big[\Delta {\rm CE}(\gamma, c, \sigma_1; \tilde{m}_1) - \Delta {\rm CE}(\gamma, c, \sigma_1; \tilde{m}_2)\big] + \big[\Delta {\rm CE}(\gamma, c, \sigma_1; \tilde{m}_2) - \Delta {\rm CE}(\gamma, c, \sigma_2; \tilde{m}_2)\big],
\end{align*}
where $|\Delta {\rm CE}(\gamma, c, \sigma_1; \tilde{m}_1) - \Delta {\rm CE}(\gamma, c, \sigma_1; \tilde{m}_2)| \leq C_m |\tilde{m}_1 - \tilde{m}_2|$ and $|\Delta {\rm CE}(\gamma, c, \sigma_1; \tilde{m}_2) - \Delta {\rm CE}(\gamma, c, \sigma_2; \tilde{m}_2)| \geq c_{\sigma}(\sigma_1 - \sigma_2)$. Thus
\[
0 \geq -C_m |\tilde{m}_1 - \tilde{m}_2| + c_{\sigma} (\sigma_1 - \sigma_2),
\]
we obtain $\sigma_1-\sigma_2 \le (C_m/c_\sigma)|\tilde m_1-\tilde m_2|$.

If at least one of $\sigma_1,\sigma_2$ lies on the boundary, we argue similarly but replace the equalities by the corresponding sign conditions. For instance, if $\sigma_1=\sigma_{\max}$, then $\Delta {\rm CE}(\gamma,c,\sigma_{\max};\tilde m_1)\le 0$, whereas $\Delta {\rm CE}(\gamma,c,\sigma_2;\tilde m_2)\ge 0$ when $\sigma_2=\sigma_{\min}$, and $\Delta {\rm CE}(\gamma,c,\sigma_2;\tilde m_2)=0$ when $\sigma_2\in(\sigma_{\min},\sigma_{\max})$. In all cases we have
\begin{align*}
0 &\ge \Delta {\rm CE}(\gamma,c,\sigma_1;\tilde m_1)-\Delta {\rm CE}(\gamma,c,\sigma_2;\tilde m_2)\\
&=\big[\Delta {\rm CE}(\gamma,c,\sigma_1;\tilde m_1)-\Delta {\rm CE}(\gamma,c,\sigma_1;\tilde m_2)\big]
+\big[\Delta {\rm CE}(\gamma,c,\sigma_1;\tilde m_2)-\Delta {\rm CE}(\gamma,c,\sigma_2;\tilde m_2)\big],
\end{align*}
and the same two bounds apply because $\sigma_1,\sigma_2\in[\sigma_{\min},\sigma_{\max}]$. This again yields $\sigma_1-\sigma_2 \le \frac{C_m}{c_\sigma}\,|\tilde m_1-\tilde m_2|$.

Therefore,
\begin{equation} \label{eq:Csigma}
|\hat{\sigma}^*(\gamma, c; \tilde{m}_1) - \hat{\sigma}^*(\gamma, c; \tilde{m}_2)| = \sigma_1 - \sigma_2 \leq \frac{C_m}{c_{\sigma}} |\tilde{m}_1 - \tilde{m}_2|.
\end{equation}

Now pick $q(\theta; \tilde m_i)$ as any measurable participation probability satisfying the mixed participation rule, $i = 1, 2$. For any fixed $(\gamma, c)$, $q(\theta; \tilde m_i)$ can only change value at the threshold $\hat{\sigma}^*(\gamma, c; \tilde m_i)$, up to the single indifference point. Therefore, $q(\gamma, c, \sigma; \tilde m_1)$ and $q(\gamma, c, \sigma; \tilde m_2)$ can differ only when $\sigma$ lies between $\hat{\sigma}^*(\gamma, c; \tilde m_1)$ and $\hat{\sigma}^*(\gamma, c; \tilde m_2)$, and $|q(\gamma, c, \sigma; \tilde m_1) - q(\gamma, c, \sigma; \tilde m_2)| \leq 1$. Thus
\[
\int_{\sigma_{\min}}^{\sigma_{\max}} |q(\gamma, c, \sigma; \tilde m_1) - q(\gamma, c, \sigma; \tilde m_2)| f(\gamma, c, \sigma) d\sigma \leq \|f\|_{\infty} |\hat{\sigma}^*(\gamma, c; \tilde m_1) - \hat{\sigma}^*(\gamma, c; \tilde m_2)|.
\]
Write $\mathcal G = [\gamma_{\min}, \gamma_{\max}], \mathcal C = [0, c_{\max}]$ as the feasible region of $\gamma$ and $c$. Integrating over $\gamma$ and $c$,
\begin{align*}
    \int_{\Theta} |q(\theta; \tilde m_1) - q(\theta; \tilde m_2)| d\pi(\theta) &\leq \int_{\mathcal G}\int_{\mathcal C} \|f\|_{\infty} \ |\hat{\sigma}^*(\gamma,c;\tilde{m}_1) - \hat{\sigma}^*(\gamma,c;\tilde{m}_2)|\ dc\,d\gamma\\
    &\le \int_{\mathcal G}\int_{\mathcal C} C_f \cdot \frac{C_m}{c_\sigma} \ |\tilde{m}_1 - \tilde{m}_2|\ dc\,d\gamma\\
    &= \frac{C_f \cdot C_m}{c_\sigma} |\mathcal G| \cdot |\mathcal{C}| \cdot |\tilde{m}_1 - \tilde{m}_2|,
\end{align*}
by \Cref{assum_unique}((U5)), $\|f\|_{\infty}\le C_f$. Here $|\mathcal G| = \gamma_{\max} - \gamma_{\min}$, $|\mathcal{C}| = c_{\max} - 0$ are the Lebesgue measure of the support set for $\gamma$ and $c$. Define
\begin{equation}\label{eq:Crho}
C_q := \frac{C_f \cdot C_m}{c_\sigma} |\mathcal G| \cdot |\mathcal{C}|,
\end{equation}
then
\begin{equation} \label{eq: q lip}
\int_{\Theta} |q(\theta; \tilde m_1) - q(\theta; \tilde m_2)| d\pi(\theta) \le C_q|\tilde m_1-\tilde m_2|.
\end{equation}

    \vspace{5pt}\paragraph{\textbf{Step 4. Contraction property of the mean-field map $\Phi$.}}

    In this step we show that the aggregate response map $\Phi: [0, E_{\max}] \to [0, E_{\max}]$ is Lipschitz continuous with constant strictly smaller than $1$. This establishes that $\Phi$ is a contraction, which yields uniqueness of the fixed point.
    
    Recall from \eqref{eq: Phi},
    \[
    \Phi(\tilde{m}) = \int_{\Theta} \Big[ q(\theta; \tilde m) e^{\mathrm{in}}(\theta; \tilde m) + (1-q(\theta; \tilde m)) e^{\mathrm{out}}(\theta; \tilde m) \big] d\pi(\theta).
    \]
    By Step 2, this map is well defined, since different admissible versions of $q(\theta; \tilde m)$ can differ only on a $\pi$-null set.
    
    Let $\tilde{m}_1, \tilde{m}_2 \in [0, E_{\max}]$. Then,
\begin{align*}
&\Phi(\tilde{m}_1) - \Phi(\tilde{m}_2)\\
&\quad = \int_\Theta (e^{\mathrm{out}}(\theta;\tilde{m}_1) - e^{\mathrm{out}}(\theta;\tilde{m}_2)) d\pi(\theta)\\
&\qquad + \int_\Theta \Big[\big(e^{\mathrm{in}}(\theta;\tilde{m}_1) - e^{\mathrm{out}}(\theta;\tilde{m}_1)\big)q(\theta;\tilde{m}_1) - \big(e^{\mathrm{in}}(\theta;\tilde{m}_2) - e^{\mathrm{out}}(\theta;\tilde{m}_2)\big)q(\theta;\tilde{m}_2)\Big] d\pi(\theta)\\
&\quad =: I_1 + I_2.
\end{align*}

For $I_1$, from Step 1, we have
\[
|e^{\mathrm{out}}(\theta;\tilde{m}_1) - e^{\mathrm{out}}(\theta;\tilde{m}_2)| \leq x|\tilde{m}_1 - \tilde{m}_2|.
\]
Then
\[
|I_1|\le \int_\Theta x|\tilde m_1-\tilde m_2|\,d\pi(\theta)=x|\tilde m_1-\tilde m_2|.
\]

For $I_2$, let $\Delta e(\theta; \tilde{m}) := e^{\mathrm{in}}(\theta;\tilde{m}) - e^{\mathrm{out}}(\theta;\tilde{m})$. Decompose $I_2$ as
\begin{align*}
    I_2 = &\int_\Theta \Big[\Delta e(\theta; \tilde{m}_1) q(\theta;\tilde{m}_1) - \Delta e(\theta; \tilde{m}_2) q(\theta;\tilde{m}_2)\Big] d\pi(\theta)\\
    = &\int_\Theta \Big[\big(\Delta e(\theta; \tilde{m}_1) - \Delta e(\theta; \tilde{m}_2)\big) q(\theta;\tilde{m}_1)\Big] d\pi(\theta)\\
    &+ \int_\Theta \Big[\Delta e(\theta; \tilde{m}_2) \big(q(\theta;\tilde{m}_1) - q(\theta;\tilde{m}_2)\big)\Big] d\pi(\theta)\\
    =: & I_{2,1} + I_{2,2}.
\end{align*}
For $I_{2,1}$, by Step 1,
\begin{align*}
|\Delta e(\theta; \tilde{m}_1) - \Delta e(\theta; \tilde{m}_2)| &\leq |e^{\mathrm{in}}(\theta; \tilde{m}_1) - e^{\mathrm{in}}(\theta; \tilde{m}_2)| + |e^{\mathrm{out}}(\theta; \tilde{m}_1) - e^{\mathrm{out}}(\theta; \tilde{m}_2)|\\
&\leq 2x |\tilde{m}_1 - \tilde{m}_2|.
\end{align*}
Since $0 \leq q(\theta;\tilde{m}_1) \leq 1$,
\[
|I_{2,1}| \leq \int_{\Theta} 2x |\tilde{m}_1 - \tilde{m}_2| q(\theta;\tilde{m}_1) d\pi(\theta) \leq 2x |\tilde{m}_1 - \tilde{m}_2|.
\]
For $I_{2,2}$, by the definition of effort levels, $|\Delta e(\theta; \tilde{m}_2)| = |e^{\mathrm{in}}(\theta; \tilde{m}_2) - e^{\mathrm{out}}(\theta; \tilde{m}_2)| \leq E_{\max}$. Therefore,
\begin{align*}
|I_{2,2}| &\leq \int_\Theta |\Delta e(\theta; \tilde{m}_2)| \cdot |q(\theta; \tilde{m}_1) - q(\theta; \tilde{m}_2)| d\pi(\theta) \\
&\leq E_{\max} \int_\Theta |q(\theta; \tilde{m}_1) - q(\theta; \tilde{m}_2)| d\pi(\theta)\\
&\leq E_{\max} \cdot C_q |\tilde m_1 - \tilde m_2|,
\end{align*}
where the last inequality follows from \eqref{eq: q lip}.
Combining the bounds for $I_1$, $I_{2,1}$ and $I_{2,2}$, we have
\begin{align*}
    |\Phi(\tilde{m}_1) - \Phi(\tilde{m}_2)| &\leq |I_1| + |I_{2,1}| + |I_{2,2}|\\
    &\leq x |\tilde{m}_1 - \tilde{m}_2| + 2x |\tilde{m}_1 - \tilde{m}_2| + E_{\max} \cdot C_q |\tilde{m}_1 - \tilde{m}_2|\\
    &= (3x + E_{\max} \cdot C_q) |\tilde{m}_1 - \tilde{m}_2|.
\end{align*}
Under \Cref{assum_unique}((U6)),
\[
3x + E_{\max} \cdot C_q < 1.
\]
Therefore, $\Phi$ is a contraction mapping on the complete metric space $[0, E_{\max}]$. By the Banach fixed point theorem, $\Phi$ admits a unique fixed point. By Step 1, the optimal efforts $e^{\mathrm{in}}(\theta; \tilde m^*)$ and $e^{\mathrm{out}}(\theta; \tilde m^*)$ are uniquely defined. By Step 2, the participation probability $q(\theta; \tilde m^*)$ is unique for $\pi$-a.e.~$\theta$. Therefore, the associated mean-field equilibrium is unique up to $\pi$-a.e.~equality. In particular, all aggregate equilibrium quantities are unique.

\qed

\subsection{Proof of \Cref{cor: worst best}} \label{sec: proof of cor}

Fix any admissible contract $k \in \mathcal K$. By \Cref{assum_stack}(I2), the lower-level mean-field game admits at least one mean-field Nash equilibrium, so $\mathcal M(k)$ is nonempty. Hence the equilibrium payoff set $\mathbb J(k) = \{J(k;M): M \in \mathcal M(k)\}$ is nonempty for any $k \in \mathcal K$.

We first show that the insurer payoff is uniformly bounded. Let $M = (\tilde m, \{e^{\mathrm{in}}(\theta), e^{\mathrm{out}}(\theta), q(\theta)\}_{\theta \in \Theta}) \in \mathcal M(k)$. By the bounded-effort assumption, we have $0 \leq e^{\mathrm{in}}(\theta), \tilde{m} \leq E_{\max}$. Thus
\[
0 \leq e^{\mathrm{in}}(\theta) + x\tilde{m} \leq (1+x)E_{\max}.
\]
Furthermore, the participation probability satisfies $0 \leq q(\theta) \leq 1$.

Because $k \in \mathcal K$, Assumption \ref{assum_stack}(I1) dictates that the contract parameters are bounded:
\[
0 \leq p \leq p_{\max}, \quad 0 \leq \alpha \leq \alpha_{\max}, \quad 0 \leq \beta \leq \beta_{\max}.
\]
Because the cost function $\mu$ is continuous on the compact interval $[0, (1+x) E_{\max}]$, it is bounded. We define its maximum absolute value as:
\[
M_{\mu} := \sup_{z \in [0, (1+x) E_{\max}]} |\mu(z)| < \infty.
\]
Applying these uniform bounds to the integrand, the absolute value of the insurer's payoff satisfies:
\begin{align*}
    |J(k; M)| &= \Big|\int_{\Theta} \Big(p - \alpha e^{\mathrm{in}}(\theta)- \beta \mu(e^{\mathrm{in}}(\theta) + x \tilde{m})\Big) q(\theta) d\pi(\theta)\Big|\\
    &\leq \int_{\Theta} (p_{\max} + \alpha_{\max} E_{\max} + \beta_{\max} M_{\mu}) q(\theta) d\pi(\theta)\\
    &\leq p_{\max} + \alpha_{\max} E_{\max} + \beta_{\max} M_{\mu}\\
    &=: C_J.
\end{align*}
Thus the insurer payoff $J(k; M)$ is uniformly bounded over all admissible contracts and all lower-level equilibria. For every $k\in\mathcal K$, the set $\mathbb J(k) = \{J(k;M):M\in\mathcal M(k)\}$ is nonempty and contained in $[-C_J, C_J]$. Therefore,
\[
\mathcal J^{\mathrm{worst}}(k)
=
\inf_{M\in\mathcal M(k)} J(k;M),
\qquad
\mathcal J^{\mathrm{best}}(k)
=
\sup_{M\in\mathcal M(k)} J(k;M)
\]
are finite real numbers for every $k\in\mathcal K$, and both are bounded above by $C_J$. Hence the hypotheses of \Cref{thm: insurer} are satisfied for both criteria. Applying \Cref{thm: insurer} with $\mathcal J=\mathcal J^{\mathrm{worst}}$ and $\mathcal J=\mathcal J^{\mathrm{best}}$ gives the desired $\varepsilon$-optimal contracts.
\qed

\subsection{Proof of \Cref{prop: uniq equilibrium}} \label{sec: proof of uniq equilibrium insurer}

To prove the existence of an optimal contract under the unique lower-level equilibrium response, we proceed in four steps. Step 1 demonstrates that under the uniqueness assumptions, the insurer's payoff evaluates to a well-defined, single-valued function on the admissible contract space. Step 2 proves that the lower-level equilibrium objects (the mean-field, individual efforts, and participation probabilities) are continuous with respect to the contract. Step 3 utilizes these results to establish the continuity of the insurer's objective function. Finally, Step 4 invokes the Weierstrass extreme value theorem to guarantee the existence of an optimal contract.
\vspace{5pt}\paragraph{\textbf{Step 1. The unique insurer payoff is well-defined.}}

Fix any admissible contract $k = (p, \alpha, \beta) \in \mathcal K^{\mathrm{uniq}}$. Since \Cref{assum_unique} holds for every admissible $k$, thus by \Cref{thm_NEuniq} the lower-level mean-field game among agents admits a unique mean-field Nash equilibrium up to $\pi$-a.e.~equality. We denote this equilibrium by
\[
M(k) = (\tilde m(k), \{e^{\mathrm{in}}(\theta; k), e^{\mathrm{out}}(\theta; k), q(\theta;k)\}_{\theta \in \Theta}).
\]
Here we choose any representative of $q(\cdot; k)$ within its $\pi$-a.e.~equivalence class, thus this equilibrium is unique up to $\pi$-a.e.~equality. This ambiguity does not affect any payoff-relevant object, because the insurer payoff depends on $q$ only through the integral
\[
J(k;M(k))
=
\int_\Theta
\Big(
p-\alpha e^{\mathrm{in}}(\theta;k)
-\beta\mu(e^{\mathrm{in}}(\theta;k)+x\tilde m(k))
\Big)
q(\theta;k)\,d\pi(\theta).
\]
Hence changing $q(\cdot;k)$ on a $\pi$-null set leaves $J(k;M(k))$ unchanged. The insurer's payoff evaluated at the equilibrium can be represented as
\[
\mathcal J^{\mathrm{uniq}}(k) = J(k; M(k)),
\]
which is single-valued and well-defined for any $k \in \mathcal K^{\mathrm{uniq}}$.

\vspace{5pt}\paragraph{\textbf{Step 2. The lower-level equilibrium objects are continuous on $k \in \mathcal K^{\mathrm{uniq}}$.}}

Fix a sequence $k_n := (p_n, \alpha_n, \beta_n) \in \mathcal K^{\mathrm{uniq}}$, $k_0 := (p_0, \alpha_0, \beta_0) \in \mathcal K^{\mathrm{uniq}}$, and suppose that $k_n \to k_0$ as $n \to \infty$. For each $n$, write
\[
M(k_n) = \Big(\tilde{m}(k_n), \{e^{\mathrm{in}}(\theta; k_n), e^{\mathrm{out}}(\theta; k_n), q(\theta; k_n)\}_{\theta \in \Theta}\Big),
\]
and similar for $M(k_0)$. We now show that $\tilde{m}(k_n) \to \tilde{m}(k_0)$, $e^{\cdot}(\theta; k_n) \to e^{\cdot}(\theta; k_0)$, and $q(\theta; k)$ is $L^1$-continuous on $k$.

We first prove the continuity of the equilibrium mean effort $k \mapsto \tilde{m}(k)$. For each fixed admissible contract $k \in \mathcal K^{\mathrm{uniq}}$, denote $\Phi_{k}: [0, E_{\max}] \to [0, E_{\max}]$ as the aggregate response map of the lower-level mean-field game,
\begin{equation} \label{eq:Phik}
\Phi_k(\tilde m) = \int_{\Theta} \big[q(\theta; \tilde{m}, k) e^{\mathrm{in}}(\theta;\tilde{m},k) + (1 - q(\theta; \tilde{m}, k)) e^{\mathrm{out}}(\theta;\tilde{m},k)\big] d\pi(\theta).
\end{equation}
For notational clarity, when both a candidate mean-field $\tilde m$ and a contract $k$ are fixed, we write $e^{\mathrm{in}}(\theta;\tilde m,k)$, $e^{\mathrm{out}}(\theta;\tilde m,k)$, and $q(\theta;\tilde m,k)$ for the corresponding individual best-response objects and participation rule. Along the unique equilibrium induced by contract $k$, we write $\tilde m(k)$, $e^{\mathrm{in}}(\theta;k)=e^{\mathrm{in}}(\theta;\tilde m(k),k)$, $e^{\mathrm{out}}(\theta;k)=e^{\mathrm{out}}(\theta;\tilde m(k),k)$, and $q(\theta;k)=q(\theta;\tilde m(k),k)$.

By \Cref{thm_NEuniq}, the equilibrium effort is the unique fixed point of $\Phi_{k}$, i.e.
\[
\tilde{m}(k) = \Phi_{k}\big(\tilde{m}(k)\big).
\]
Let
\[
\tilde{m}_n := \tilde{m}(k_n), \qquad \tilde{m}_0 := \tilde{m}(k_0).
\] 
Since the uniqueness conditions in $\Cref{assum_unique}$ hold uniformly over $\mathcal K^{\mathrm{uniq}}$, there exists a constant $C_{\mathcal K^{\mathrm{uniq}}} := \sup_{k \in \mathcal K^{\mathrm{uniq}}} (3x + E_{\max} C_q(k)) < 1$, such that for every $k \in \mathcal K^{\mathrm{uniq}}$,
\[
|\Phi_{k}(m_1) - \Phi_{k}(m_2)| \leq C_{\mathcal K^{\mathrm{uniq}}} |m_1 - m_2|, \qquad \forall m_1, m_2 \in [0, E_{\max}].
\]
Then for any $n$,
\begin{align*}
    |\tilde{m}_n - \tilde{m}_0| &= |\Phi_{k_n}(\tilde{m}_n) - \Phi_{k_0}(\tilde{m}_0)|\\
    &\leq |\Phi_{k_n}(\tilde{m}_n) - \Phi_{k_n}(\tilde{m}_0)| + |\Phi_{k_n}(\tilde{m}_0) - \Phi_{k_0}(\tilde{m}_0)|\\
    &\leq C_{\mathcal K^{\mathrm{uniq}}} |\tilde{m}_n - \tilde{m}_0| + |\Phi_{k_n}(\tilde{m}_0) - \Phi_{k_0}(\tilde{m}_0)|.
\end{align*}
Hence,
\begin{equation} \label{eq:mtophi}
(1-C_{\mathcal K^{\mathrm{uniq}}}) |\tilde{m}_n - \tilde{m}_0| \leq |\Phi_{k_n}(\tilde{m}_0) - \Phi_{k_0}(\tilde{m}_0)|.
\end{equation}
Thus it suffices to show that $\Phi_{k_n}(\tilde{m}_0) \to \Phi_{k_0}(\tilde{m}_0)$. 

Fix $\tilde m_0$. The individual certainty-equivalent function ${\rm CE}^{\mathrm{out}}(e; \theta, \tilde{m}_0)$ does not depend on the contract, and ${\rm CE}^{\mathrm{in}}(e; \theta, \tilde{m}_0, k)$ depends continuously on $(k, e)$ for any fixed $\theta$. By Berge's Maximum Theorem \citep[Theorem 17.31]{aliprantis2006infinite}, the argmin correspondence of ${\rm CE}^{\mathrm{in}}$ is upper hemicontinuous. Under the strictly convexity assumption in \Cref{assum_unique}, the minimizer is unique, the argmin correspondence is single-valued, hence,
\begin{equation} \label{eq: e convergence}
e^{\mathrm{in}}(\theta;\tilde{m}_0,k_n) \to e^{\mathrm{in}}(\theta;\tilde{m}_0,k_0), \qquad e^{\mathrm{out}}(\theta;\tilde{m}_0, k) \text{ does not depend on the contract } k
\end{equation}
for $\pi$-a.e.~$\theta$.

Next we show that
\[
\int_{\Theta} |q(\theta; \tilde{m}_0, k_n) - q(\theta; \tilde{m}_0, k_0)| d\pi(\theta) \to 0.
\]
We can deduce directly from \Cref{sec: proof of uniqueness} that $\Delta {\rm CE}(\theta;\tilde{m}_0,k)$ is continuous in $k$ for $\pi$-a.e.~$\theta$. Moreover, for each fixed $(\gamma, c, k)$, the map
\[
\sigma \mapsto \Delta {\rm CE}(\gamma, c, \sigma; \tilde m_0, k)
\]
is continuously and strictly increasing, as proved in \Cref{sec: proof of uniqueness}. Hence the participation region is characterized by a unique truncated threshold from \eqref{eq: truncated sigma}
\[
\hat{\sigma}^*(\gamma, c; \tilde m_0, k) \in [\sigma_{\min}, \sigma_{\max}],
\]
and this threshold depends continuously on $k$ by the same argument in \Cref{sec: proof of uniqueness}: for fixed $(\gamma, c)$, define $g(\sigma, k) := \Delta {\rm CE}(\gamma, c, \sigma; \tilde m_0, k)$. Since $g$ is continuous on $\sigma, k$, on the compact set $[\sigma_{\min}, \sigma_{\max}] \times \mathcal K^{\mathrm{uniq}}$, it is uniformly continuous. Therefore, as $k_n \to k_0$,
\[
\sup_{\sigma \in [\sigma_{\min}, \sigma_{\max}]} |g(\sigma, k_n) - g(\sigma, k_0)| \to 0.
\]
Combining this with the uniform lower bound $\partial_{\sigma} g(\sigma, k) \geq c_{\sigma}^{\mathcal K^{\mathrm{uniq}}} := \inf_{k \in \mathcal K^{\mathrm{uniq}}} c_{\sigma}(k) \geq
\gamma_{\min}\alpha_{\min}^2\sigma_{\min} > 0$, the same argument as in \Cref{sec: proof of uniqueness} yields
\[
\hat{\sigma}^*(\gamma, c; \tilde m_0, k_n) \to \hat{\sigma}^*(\gamma, c; \tilde m_0, k_0).
\]
$q(\theta; \tilde{m}_0, k_n)$ and $q(\theta; \tilde{m}_0, k_0)$ can differ only on the slab between $\hat{\sigma}^*(\gamma, c; \tilde m_0, k_n)$ and $\hat{\sigma}^*(\gamma, c; \tilde m_0, k_0)$, which is up to $\pi$-null indifference sets. Since $\pi$ admits a bounded density,
\[
\int_{\Theta} |q(\theta; \tilde{m}_0, k_n) - q(\theta; \tilde{m}_0, k_0)| d\pi(\theta) \to 0.
\]
Now write
\[
\Delta e(\theta; \tilde m, k) := e^{\mathrm{in}}(\theta; \tilde m, k) - e^{\mathrm{out}}(\theta; \tilde m, k).
\]
Recall the definition of $\Phi_k$ from \eqref{eq:Phik}, we have
\begin{align*}
    |\Phi_{k_n}(\tilde m_0) - \Phi_{k_0}(\tilde m_0)| \leq &\Big|\int_{\Theta} (e^{\mathrm{out}}(\theta; \tilde m_0, k_n) -  e^{\mathrm{out}}(\theta; \tilde m_0, k_0)) d\pi(\theta)\Big|\\
    &+ \Big|\int_{\Theta} \big[q(\theta; \tilde{m}_0, k_n) \Delta e(\theta; \tilde m_0, k_n) - q(\theta; \tilde{m}_0, k_0) \Delta e(\theta; \tilde m_0, k_0)\big] d\pi(\theta)\Big|\\
    \leq &\Big|\int_{\Theta} (e^{\mathrm{out}}(\theta; \tilde m_0, k_n) -  e^{\mathrm{out}}(\theta; \tilde m_0, k_0)) d\pi(\theta)\Big|\\
    &+ \Big|\int_{\Theta} q(\theta; \tilde{m}_0, k_n) (\Delta e(\theta; \tilde m_0, k_n) - \Delta e(\theta; \tilde m_0, k_0)) d\pi(\theta)\Big|\\
    &+ \Big|\int_{\Theta} (q(\theta; \tilde{m}_0, k_n) - q(\theta; \tilde{m}_0, k_0)) \Delta e(\theta; \tilde m_0, k_0) d\pi(\theta)\Big|.
\end{align*}
The first term is equal to $0$ by \eqref{eq: e convergence}, since for fixed $\tilde m_0$ the outside effort does not depend on $k$. For the second term, since $|q(\theta; \tilde m_0, k_n)| \leq 1$, and $\Delta e(\theta; \tilde m_0, k_n) \to \Delta e(\theta; \tilde m_0, k_0)$ for $\pi$-a.e.~$\theta$, by dominated convergence it goes to $0$ too. The third term converges to $0$ since $|\Delta e(\theta; \tilde m_0, k_0)| \leq E_{\max}$, and $\int_{\Theta} |q(\theta; \tilde{m}_0, k_n) - q(\theta; \tilde{m}_0, k_0)| d\pi(\theta) \to 0$. Thus
\[
\Phi_{k_n}(\tilde{m}_0) \to \Phi_{k_0}(\tilde{m}_0).
\]
Returning to \eqref{eq:mtophi}, we conclude that $\tilde{m}_n \to \tilde{m}_0$. Thus the equilibrium mean effort $k \mapsto \tilde{m}(k)$ is continuous on $\mathcal K^{\mathrm{uniq}}$.

We now prove continuity of the effort functions and $L^1(\pi)$-continuity of the participation probability $q$. By definition of the selected equilibrium objects,
\[
e^{\mathrm{in}}(\theta; k) = e^{\mathrm{in}}(\theta;\tilde{m}(k),k), \qquad e^{\mathrm{out}}(\theta; k) = e^{\mathrm{out}}(\theta;\tilde{m}(k),k),
\]
and
\[
q(\theta; k) = q(\theta; \tilde{m}(k), k),
\]
where $e^{\mathrm{in}}$, $e^{\mathrm{out}}$ and $q$ are the lower-level equilibrium objects introduced in \Cref{defn: equilibrium}.

Once $\tilde{m}(k)$ is continuous in $k$, it follows that the individual optimal efforts $e^{\mathrm{in}}(\theta; \tilde{m}(k),k)$ and $e^{\mathrm{out}}(\theta; \tilde{m}(k), k)$ are continuous in $k$ for $\pi$-a.e.~$\theta$, i.e.
\begin{align*}
    &e^{\mathrm{in}}(\theta; k_n) = e^{\mathrm{in}}(\theta;\tilde{m}(k_n),k_n) \to e^{\mathrm{in}}(\theta;\tilde{m}(k_0),k_0) = e^{\mathrm{in}}(\theta; k_0),\\
    &e^{\mathrm{out}}(\theta; k_n) = e^{\mathrm{out}}(\theta;\tilde{m}(k_n),k_n) \to e^{\mathrm{out}}(\theta;\tilde{m}(k_0),k_0) = e^{\mathrm{out}}(\theta; k_0).
\end{align*}
Consider the continuity of $\hat{\sigma}^*(\gamma, c; \tilde{m}(k), k)$,
\begin{align*}
    &|\hat{\sigma}^*(\gamma, c; \tilde{m}(k_n), k_n) - \hat{\sigma}^*(\gamma, c; \tilde{m}(k_0), k_0)|\\
    &\quad \leq |\hat{\sigma}^*(\gamma, c; \tilde{m}(k_n), k_n) - \hat{\sigma}^*(\gamma, c; \tilde{m}(k_n), k_0)| + |\hat{\sigma}^*(\gamma, c; \tilde{m}(k_n), k_0) - \hat{\sigma}^*(\gamma, c; \tilde{m}(k_0), k_0)|.
\end{align*}
The first term converges since $\hat{\sigma}^*(\gamma, c; \tilde{m}, k)$ is continuous in $k$ for a fixed $(\gamma, c, \tilde m)$, and the second term converges since $\hat{\sigma}^*(\gamma, c; \tilde{m}, k)$ is continuous in $\tilde{m}$ for each fixed $k$ by the argument in \Cref{sec: proof of uniqueness}. Therefore, for each fixed $(\gamma, c)$, the two participation probabilities $q(\theta; k_n)$ and $q(\theta; k_0)$ can differ only when $\sigma$ lies between the two corresponding thresholds, up to $\pi$-null indifference sets. Since $\pi$ has a bounded density, the same argument as in \Cref{sec: proof of uniqueness} gives
\[
\int_{\Theta} |q(\theta; k_n) - q(\theta; k_0)| d\pi(\theta) \to 0.
\]

\vspace{5pt}\paragraph{\textbf{Step 3. The insurer's objective function is continuous on $\mathcal K^{\mathrm{uniq}}$.}}

We now prove that the reduced insurer objective $\mathcal J^{\mathrm{uniq}}(k) = J(k; M(k))$ is continuous on $\mathcal K^{\mathrm{uniq}}$. For a sequence $k_n \to k_0 \in \mathcal K^{\mathrm{uniq}}$, we show that $\mathcal J^{\mathrm{uniq}}(k_n) \to \mathcal J^{\mathrm{uniq}}(k_0)$.

Recall that the insurer's per-capita expected profit under a contract $k$ and induced equilibrium $M(k)$ is
\begin{align*}
    \mathcal J^{\mathrm{uniq}}(k) = J(k; M(k)) &= \int_{\Theta} \Big(p - \alpha e^{\mathrm{in}}(\theta; k)- \beta \mu(e^{\mathrm{in}}(\theta; k) + x \tilde{m}(k))\Big) q(\theta; k) d\pi(\theta).
\end{align*}
For convenience, define
\begin{equation} \label{eq: H function}
H(\theta; k) := \Big(p - \alpha e^{\mathrm{in}}(\theta; k)- \beta \mu(e^{\mathrm{in}}(\theta; k) + x \tilde{m}(k))\Big)
\end{equation}
By Step 2, $\tilde{m}(k_n) \to \tilde{m}(k_0)$, and $e^{\mathrm{in}}(\theta; k_n) \to e^{\mathrm{in}}(\theta; k_0)$, since $\mu$ is continuous, it follows that
\[
\mu(e^{\mathrm{in}}(\theta; k_n) + x \tilde{m}(k_n)) \to \mu(e^{\mathrm{in}}(\theta; k_0) + x \tilde{m}(k_0)).
\]
We conclude that $H(\theta; k_n) \to H(\theta; k_0)$ for $\pi$-a.e.~$\theta$.

Next, we establish a uniform bound to justify passing the limit through the integral. Because $\{k_n\} \subset \mathcal K^{\mathrm{uniq}}$, the contract parameters are uniformly bounded: $p_n \leq p_{\max}$, $\alpha_n \leq \alpha_{\max}$, and $\beta_n \leq \beta_{\max}$. Furthermore, the bounded-effort assumption ensures $e^{\mathrm{in}}(\theta; k_n) + x\tilde{m}(k_n) \leq (1+x)E_{\max}$. Because $\mu$ is continuous on this compact interval, it is bounded by $M_{\mu} := \sup_{z} |\mu(z)| < \infty$. Therefore:
\[
|H(\theta; k_n)| \leq p_{\max} + \alpha_{\max} E_{\max} + \beta_{\max} M_{\mu} =: C_H < \infty,
\]
for all $n$ and $\pi$-a.e.~$\theta$. We now decompose
\begin{align*}
    |\mathcal J^{\mathrm{uniq}}(k_n) - \mathcal J^{\mathrm{uniq}}(k_0)| = &\Big|\int_{\Theta} H(\theta; k_n) q(\theta; k_n) d\pi(\theta) - \int_{\Theta} H(\theta; k_0) q(\theta; k_0) d\pi(\theta)\Big|\\
    \leq &\Big|\int_{\Theta} (H(\theta; k_n) - H(\theta; k_0)) q(\theta; k_n) d\pi(\theta)\Big|\\
    &+ \Big|\int_{\Theta} H(\theta; k_0) (q(\theta; k_n) - q(\theta; k_0))d\pi(\theta)\Big|.
\end{align*}
The first term converges to $0$ because $0 \leq q(\theta; k_n) \leq 1$, $H(\theta; k_n) \to H(\theta; k_0)$ pointwise, and the integrand is uniformly dominated by the integrable constant $2C_H$, the integral converges to $0$ by the Dominated Convergence Theorem. 
For the second term, we bound the integrand using the uniform constant $C_H$:
\[
\Big|\int_{\Theta} H(\theta; k_0) \big(q(\theta; k_n) - q(\theta; k_0)\big) d\pi(\theta)\Big| \leq C_H \int_{\Theta} |q(\theta; k_n) - q(\theta; k_0)| d\pi(\theta).
\]
Because $q$ converges in $L^1(\pi)$ as shown in Step 2, this term also vanishes. Thus, $\mathcal J^{\mathrm{uniq}}(k_n) \to \mathcal J^{\mathrm{uniq}}(k_0)$, proving that the objective function is continuous.
\vspace{5pt}\paragraph{\textbf{Step 4. Existence of an optimal contract.}}

By \Cref{assum_stack} the admissible contract set
\[
\mathcal K^{\mathrm{uniq}} = [0, p_{\max}] \times [\alpha_{\min}, \alpha_{\max}] \times [0, \beta_{\max}]
\]
is compact. From step 3, the insurer's reduced objective function $\mathcal J^{\mathrm{uniq}}(k) = J(k; M(k))$ is continuous on $\mathcal K^{\mathrm{uniq}}$.

Therefore, by the Weierstrass extreme value theorem, there exists a contract $k^* = (p^*, \alpha^*, \beta^*)$, such that
\[
\mathcal J^{\mathrm{uniq}}(k^*) = \max_{k \in \mathcal K^{\mathrm{uniq}}} \mathcal J^{\mathrm{uniq}}(k).
\]
By Step 1, the contract $k^*$ induces a unique lower-level mean-field Nash equilibrium $M(k^*) = M(p^*, \alpha^*, \beta^*)$, hence the pair $(k^*, M(k^*))$ satisfies that
\begin{enumerate}
    \item[(i)] $M(k^*)$ is the unique mean-field Nash equilibrium of the agents' game under the contract $k^*$;

    \item[(ii)] given the equilibrium response, the contract $k^*$ maximizes the insurer's expected profit over all admissible contracts in $\mathcal K^{\mathrm{uniq}}$.
\end{enumerate}
Therefore, $(k^*, M(k^*))$ constitutes a Stackelberg equilibrium of the insurer-agent game.

\qed

\end{document}